\numberwithin{equation}{section}
\renewcommand{\Im}{\mathop{\rm Im  \, }}
\newcommand{\Ker}{\mathop{\rm Ker \, }}
\newcommand{\ord}{{\rm ord}}
\newcommand{\Res}{\mathop{\rm Res  \, }}
\newcommand{\cF}{\mathcal{F}}
\newcommand{\cC}{\mathcal{C}}
\newcommand{\Pc}{\mathcal{P}}
\newcommand{\V}{\mathcal{V}}
\newcommand{\CC}{\mathbb{C}}
\newcommand{\FF}{\mathbb{F}}
\newcommand{\ZZ}{\mathbb{Z}}
\newcommand{\fg}{\mathfrak{g}}
\renewcommand{\tilde}{\widetilde}
\newtheorem{theorem}{Theorem}[section]
\newtheorem{definition}[theorem]{Definition}
\newtheorem{lemma}[theorem]{Lemma}
\newtheorem{corollary}[theorem]{Corollary}
\newtheorem{proposition}[theorem]{Proposition}
\newtheorem{conjecture}[theorem]{Conjecture}
\newtheorem*{lemma*}{Lemma}
\theoremstyle{remark}
\newtheorem{remark}[theorem]{Remark}
\newtheorem{examples}[theorem]{Examples}
\renewcommand{\epsilon}{\varepsilon}
\newcommand{\half}{\frac{1}{2}}
\newcommand{\la}{\lambda}
\newcommand{\al}{\alpha}
\newcommand{\ptl}{\partial}
\begin{document}
	
\title{Poisson $\lambda$-brackets for differential-difference equations}


\author
          {Alberto De Sole$^1$,    Victor G.~Kac$^2$, Daniele Valeri$^3$ and Minoru Wakimoto$^4$} 
\address{$^1$ Dipartmento di Matematica, Sapienza Universita di Roma, P-le Aldo Moro 2, 00185 Roma, Italy. Email: desole@mat.uniroma1.it} 
\address{$ ^2$ Department of Mathematics, M.I.T, 
	Cambridge, MA 02139, USA. Email:  kac@math.mit.edu }
\address{$^3$ Yau Mathematical Center, Tsinghua University, 100084 Beijing, China. Email: daniele@math.tsinghua.edu.cn} 
\address{$^4$ 12-4 Karato-Rokkoudai, Kita-ku, Kobe 651-1334, Japan. Email: ~~wakimoto@r6.dion.ne.jp~~~~.
Supported in part by Department of Mathematics, M.I.T.}

\maketitle

	\thispagestyle{empty}

\section*{Abstract} 
We introduce the notion of a multiplicative Poisson $ \la $-bracket, which plays the same role in the theory of Hamiltonian differential-difference equations as the usual Poisson $ \la $-bracket plays in the theory of Hamiltonian PDE. We classify multiplicative Poisson $ \la $-brackets in one difference variable up to order 5. Applying the Lenard-Magri scheme to a compatible pair of multiplicative Poisson $ \la $-brackets of order 1 and 2, we establish  integrability of some differential-difference equations, generalizing the Volterra chain.

\section{Introduction}
The notion of a Lie conformal algebra appeared naturally in the study of commutators of local formal distributions. Namely, expanding the commutator in terms of derivatives of the formal delta-function $ \delta(z-w) = \sum_{n \in \ZZ} w^n z^{-n-1}, $
\begin{equation}\label{e1.1}
[a(z), b(w)] = \sum_{j = 0}^{N} c^j (w) \partial^j_w \delta (z-w) / j!, 
\end{equation}
we may define the $ \la $-bracket as the Fourier transform of \eqref{e1.1}:
\begin{equation}\label{e1.2}
[a(w)_\la b(w)] = \sum_{j = 0}^{N} \frac{\la^j}{j!} c^j (w).
\end{equation}
Then, letting $ \partial = \partial_w,  $ it is easy to see that the $ \la $-bracket \eqref{e1.2} satisfies the following properties \cite{K98}:
\begin{enumerate}
\item[C1] (sesquilinearity) $ [\partial a_\la b] = -\la [a_\la b], \ \partial [a_\la b] = [\partial a_\la b] + [a_\la \partial b],   $
\item[C2] (skewsymmetry) $ [b_\la a ] = -\vphantom{j}_\leftarrow [a_{-\partial-\la} b],  $
\item[C3] (Jacobi identity) $ [a_\la [b_\mu c]] - [b_\mu [a_\la c]] = [[a_\la b]_{\la + \mu} c]. $
\end{enumerate}
Here and thereafter the arrow to the left (resp. right) means that $ \partial  $ should be moved to the left (resp. right).

Recall that a \emph{Lie conformal algebra} is a vector space $ R $ with an endomorphism $ \partial, $ endowed with a $ \la $-bracket
$  R \otimes R \rightarrow R[\la],\, a \otimes b \mapsto [a_\la b]$, such that the axioms C1, C2, and C3 hold. 

Subsequently, in the paper \cite{GK98} the notion of a $ \Gamma $-locality was studied, where $ \Gamma $  is a subgroup of the group of fractional linear transformation $ w \mapsto \frac{aw + b}{c w + d} $. Here we consider the simplest case when $ \Gamma $ is a cyclic group, generated by a transformation $ \gamma. $ Then, instead of
\eqref{e1.1}, we consider the bracket of the form
\begin{equation}\label{e1.3}
[a(z), b(w)]  = \sum_{j = -N}^{N} c^j (w) \delta (z-\gamma^j \cdot w).
\end{equation}
It is easy to see that, letting
\[ a(w)_{(j)} b(w) = c^j (w), \ Sa(w) = \gamma' (w) a(\gamma \cdot w), \]
the following properties hold \cite{GK98}:
\begin{equation}\label{e1.4}
(Sa)_{(j)} b = a_{(j+1)}b, \ S(a_{(j)} b) = (Sa)_{(j)}Sb,
\end{equation}
\begin{equation}\label{e1.5}
b_{(j)}a = -S^j (a_{(-j)}b),
\end{equation}
\begin{equation}\label{e1.6}
a_{(i)}(b_{(j)} c) - b_{(j)} (a_{(i)}c) = (a_{(i-j)}b)_{(j)}c.
\end{equation}
Introducing, in analogy with \eqref{e1.2}, the $ \la $-bracket $ [a_\la b] = \sum_j \la^j (a_{(j)}b), $ the properties \eqref{e1.4}--\eqref{e1.6} can be rewritten as follows:
\begin{enumerate}
\item[M1] (sesquilinearity) $ [S a_\la b] = \la^{-1} [a_\la b], \ S [a_\la b] = [S a_\la Sb],   $
\item[M2] (skewsymmetry) $ [b_\la a ] = -\vphantom{j}_\leftarrow [a_{(S\la)^{-1}} b],  $
\item[M3] (Jacobi identity) $ [a_\la [b_\mu c]] - [b_\mu [a_\la c]] = [[a_\la b]_{\la  \mu} c]. $
\end{enumerate}
 We thus arrive at the following definition.
\begin{definition}\label{def0.1}
A \emph{multiplicative} Lie conformal algebra is a vector space $ R $ with an automorphism $ S, $ endowed with a $ \la $-bracket $ R \otimes R \rightarrow R[\la, \la^{-1}], \ a \otimes b \mapsto [a_\la b], $ such that the axioms M1, M2, and M3 hold.   
\end{definition}

Note that the axioms M1--M3 are obtained from C1--C3 by replacing $ \la + \mu $ by $ \la \mu $ and the derivation $ \partial  $ by the automorphism $ S $, hence the name ``multiplicative''. The multiplicative Lie conformal algebras are classified by pairs $( \fg, S)  $, where $ \fg $ is a Lie algebra and $ S $ its ``admissible'' automorphism due to the following
\begin{remark}\label{rem1.1}
\cite{GK98} A multiplicative Lie conformal algebra $ R $ carries a Lie algebra structure with the bracket $ [a,b] = a_{(0)}b $ (the coefficient of $ \la^0 $ in the $ \la $-bracket).
Furthermore, $ S $ is an automorphism of this Lie algebra, satisfying the following admissibility property:  
\begin{equation}\label{e1.7}
 [S^n a, b] = 0 \text{ for all but finitely many } n \in \ZZ.
\end{equation}
Conversely, given a Lie algebra $ \fg $ with an automorphism $ S $, satisfying \eqref{e1.7}, we can introduce on $ \fg $ the associated structure of a multiplicative Lie conformal algebra, letting 
\begin{equation}\label{e1.8}
[a_\la b] = \sum_{n \in \ZZ} \la^n [S^n a, b].
\end{equation}
\end{remark}

The purpose of the present paper is to study the multiplicative Poisson vertex algebras (PVA) and explain their role in the theory of Hamiltonian differential difference equations. Our main idea is that the multiplicative PVA play the same role in the theory of Hamiltonian differential difference equations as the usual PVA play in the theory of Hamiltonian PDE (see \cite{BDSK09} for the latter). Thus, this paper may be viewed as a development of ideas of Boris Kuperschmidt \cite{Ku85}, to whom this paper is dedicated. 

\begin{definition}\label{def1.2}
A multiplicative PVA is a unital commutative associative algebra $ \mathcal{V} $ with an automorphism $S$, endowed with a multiplicative Lie conformal algebra $\lambda$-bracket $ \{ a_\la b  \} $, such that one has
\begin{enumerate}
	\item[L1] (left Leibniz rule) $ \{ a_\la bc \} = \{ a_\la b \} c + b \{ a_\la c \}.   $
\end{enumerate}
Using skewsymmetry M2, we deduce
\begin{enumerate}
	\item[L2] (right Leibniz rule) $ \{ a b_\la c \} = \{ a_{\la S} c \}_{\rightarrow} b +  \{ b_{\la S } c \}_{\rightarrow} a.   $
\end{enumerate}
\end{definition}
\begin{remark}\label{rem1.2}
	Remark \ref{rem1.1} extends to any multiplicative PVA $ \mathcal{V} $. Namely, the Lie algebra bracket on $ \mathcal{V}, $ defined by Remark \ref{rem1.1}, together with the associative commutative multiplication on $ \mathcal{V} $, is a Poisson algebra with an automorphism $ S $, for which the Poisson bracket satisfies \eqref{e1.7}.
	\end{remark}

The first main result of the paper is the classification of multiplicative PVA
$\lambda$-brackets of order $N\leq 5$ on the space of functions $ \mathcal{V}_1 $ in one difference variable $ u. $ Note that, due to skewsymmetry M2, such a $ \la $-bracket of order $ N $ has the form 
\begin{equation}\label{e1.9}
  \{ u_\la u  \} = \sum_{j=1}^{N} (\la^j - (S \la)^{-j} ) f_j, \ f_j \in \V,\, f_N\neq 0,
\end{equation}
and, extending \eqref{e1.9} to $ \V $ using the sesquilinearity M1 and the Leibniz  rules L1 and L2 (or rather the master formula \eqref{e2.02}), the obtained $ \la $-bracket satisfies skewsymmetry.

Let $ u_n = S^n (u),  $ so that $ u_0 = u. $ The first series of examples of multiplicative Poisson $ \la $-brackets on $ \V_1 $ (i.e., satisfying, in addition, the Jacobi identity M3) is given by 
\begin{equation}\label{e1.10}
\{ u_\la u \}_{k,g} = \la^k g(u) g(u_k) - \la^{-k} g(u) g(u_{-k}),
\end{equation}
where $ g(u) \in \V_1.$ All these $ \la $-brackets are compatible, i.e. their arbitrary linear combination $ \sum_{j=1}^{N} c_j \{ u_\la u  \}_{j,g}, $ where $ c_j $'s are constants, is again a multiplicative Poisson $ \la $-bracket, called the multiplicative Poisson $ \la $-bracket of \emph{general type}.

The second series of examples, called the \emph{complementary type} multiplicative $ \la $-brackets, is of the form \eqref{e1.9} with $ N \geq 2, $ where 
\begin{equation}\label{e1.11b}
\begin{aligned}
 f_1 & = f_2 = \ldots = f_{N-3} = 0, \\
 f_{N-2} & = g(u) g(u_{N-2}) F_1 (u) \ldots F_{N-1} (u_{N-2}), \\
 f_{N-1} & = g(u) g(u_{N-1}) (\epsilon^{N-2} F_1 (u) \ldots F_{N-1} (u_{N-2}) + \epsilon^{2-N} F_1 (u_1) \ldots F_{N-1} (u_{N-1}) ), \\
 f_N & = g(u) g(u_N) F_1 (u_1) \ldots F_{N-1} (u_{N-1}),\\
\end{aligned}
\end{equation}
$ g(u), F_j (u)  $ are non-zero elements of $ \V_1 $ and $ \epsilon $ is a constant, such that
\begin{equation}\label{e1.12b}
g(u) F'_j (u) = \epsilon^{j-1} F_j (u), \ j = 1, \ldots, N-1, \ \epsilon^{N-1} = -1.
\end{equation}
We denote this Poisson $ \la $-bracket by $ \{  ._\la .  \}_{N, g, \epsilon} \, . $ 
For example,
\begin{equation}\label{e1.13a}
\{ u_\la u \}_{2,g,-1} = (\la -(\la S)^{-1}) (g(u)g(u_1) (F(u)+F(u_1)) + (\la^2 - (\la S)^{-2}) g(u) g(u_2) F(u_1), 
\end{equation}
where $ F'(u) g(u) = F(u), F'(u) \neq 0. $

Next, note that, given $ n \in \ZZ_{\geq 1}, $ replacing in \eqref{e1.9} $ \la^j $ by $ \la^{nj}, \ S^j  $ by $ S^{nj} $ and $ f_j = f_j (u, u_1, u_2, \ldots) $ by $ f_j (u, u_n, u_{2n}, \ldots), $ we obtain from a multiplicative Poisson $ \la $-bracket $ \{ ._\la . \} $ the $ n $-\emph{stretched} multiplicative Poisson $ \la $-bracket $ \{ ._\la . \}^{(n)}. $ Its order is $ nN. $

It is straightforward to check that all the above examples indeed satisfy the Jacobi identity M3 for $a=b=c=u$ (by Proposition \ref{prop2.1} it follows that these examples are multiplicative Poisson vertex algebras). We prove that any multiplicative Poisson $ \la $-bracket on $ \V_1 $ of order $ \leq 5 $ is one of the following:
\begin{enumerate}
\item[(i)] general type, 
\item[(ii)] constant multiple of the complementary type,
\item[(iii)] linear combination of the $ \la $-brackets $ \{ . _{\la} . \}_{N, g, \epsilon} $ and $ \{ . _{\la} . \}_{1,g} $ where $ N = 2 $ or 3,
\item[(iv)] linear combination of the complementary type $ \la $-bracket
  $ \{ ._\la .  \}_{2,g,-1} $ and the following $ \la $-bracket of order 4
\[ 
\begin{aligned}
f_1 & = 0, \ f_2 = g(u) g(u_2) F(u) F(u_1)^{-1} F(u_2), \\
f_3 & = g(u) g(u_3) (F(u) F^{-1} (u_1) F(u_2) + F(u_1) F^{-1}(u_2)F(u_3)),\\
f_4 & = g(u) g(u_4) F(u_1) F^{-1} (u_2) F(u_3),
\end{aligned} \]
where $ g(u)F'(u) = F(u) $ and $ F(u) \neq 0 $.
\item[(v)] linear combination of the 2-stretched $ \la $-brackets
  $ \{ ._\la .  \}^{(2)}_{2,g, -1} $ and $ \{ ._\la . \}^{(2)}_{1,g}, $
\item[(vi)] constant multiple of the following $ c $-deformed complementary type multiplicative $ \la $-bracket of order 4, where $ g(u), \ F_j(u),  $ and $ \epsilon $ are as in \eqref{e1.12b} for $ N=4 $, $\epsilon\neq -1$:
\[ 
\begin{aligned}
f_1 & = g(u) g(u_1) (c F_1(u) F_3 (u_1) - c^2) , \\
f_2 & =  g(u)g(u_2) (F_1(u) F_2 (u_1) F_3(u_2) + c ( \epsilon^2 F_1 (u) F_3 (u_1) + \epsilon^{-2} F_1 (u_1) F_3(u_2)  )  ), \\
f_3 & = g(u)g(u_3) ( \epsilon^2 F_1 (u) F_2(u_1) F_3(u_2) + \epsilon^{-2} F_1(u_1) F_2(u_2) F_3(u_3) + cF_1(u_1) F_3 (u_2) ),  \\
f_4 & = g(u) g(u_4)F_1 (u_1) F_2 (u_2) F_3 (u_3) . \\
\end{aligned} \]
\item[(vii)] linear combination of the order 2 general type  $\lambda$-bracket
  $\{u_\la u\}_{1,g} +\{u_\la u\}_{2,g}$ and the following $\la$-bracket of order 5:
\[  \begin{aligned}
f_1 & = g(u) g(u_1) F(u) G(u_1), \\
f_2 & = - g(u)g(u_2) (\epsilon F(u) G (u_1)  + \epsilon^{-1} F (u_1) G (u_2)), \\
f_3 & = g(u)g(u_3) (\epsilon^{-1} F(u) G (u_1) +F(u_1)G(u_2) + \epsilon F (u_2) G (u_3)), \\
f_4 & = - g(u)g(u_4) (\epsilon F(u_1) G (u_2)  + \epsilon^{-1} F (u_2) G (u_3)), \\
f_5 & = g(u)g(u_5) F(u_2) G (u_3), \\
  \end{aligned}\]
  where $\epsilon$ is a primitive 3rd root of 1, $g(u)\neq 0$, and
  $g(u)F'(u)=\epsilon F(u)$ and $g(u)G'(u)= G(u)$.
\item[(viii)]  constant multiple of the following multiplicative $ \la $-bracket of order 5, attached to non-zero functions $ F(u), \ g(u) \in \V, $ such that $ g(u)F'(u) =  F(u) $ and a constant  c , given by
\[ 
\begin{aligned}
f_1 & =  g(u) g(u_1) (F(u) F(u_1) + c(F(u) + F(u_1)) + c^2), \\
f_2 & =  -g(u) g(u_2) (F(u)F(u_1) + F(u_1)F(u_2) + c(F(u) + F(u_1) +F(u_2)) + c^2), \\
f_3 & = g(u) g(u_3) (F(u) F(u_1) + F(u_1) F(u_2) + F(u_2) F(u_3) + c(F(u_1) + F(u_2))),  \\
f_4 & =  -g(u)g(u_4)(F(u_1)F(u_2) + F(u_2) F(u_3) + cF(u_2)), \\
f_5 & = g(u) g(u_5) F(u_2) F(u_3).   \\
\end{aligned} \]
\end{enumerate}

We give a detailed proof of this classification for $ N = 1, 2, 3 $ and 4 (Theorems
\ref{th2.1}, \ref{th8.1}, and \ref{th10.1}). The proof for $ N =5 $ (under the same assumptions on $ \V $ as in Theorems \ref{th2.1}, \ref{th8.1}, and \ref{th10.1}) is similar, but involves much more computations, which are skipped.

Thus, we see that, in spite of many analogies, the classification of multiplicative Poisson $\la$-brackets
is radically different from that of ordinary Poisson $\la$-brackets, see \cite{DSKW10}. 

%
%

Multiplicative PVA $ \mathcal{V} $ gives rise to a Hamiltonian differential-difference equation as follows. Denote by 
\[ \smallint : \V \rightarrow \bar{\V} := \V / (S-1) \V \]
\noindent the canonical quotient map. Then it is immediate to see that the following key lemma holds. 

\begin{lemma}\label{lem1.1}
Formula
\begin{equation}\label{e1.11}
\{ \smallint f, \smallint g   \} = \smallint \{f_\la g \} |_{\la = 1}
\end{equation}
\noindent endows $ \bar{\V} $ with a well-defined Lie algebra structure, and the formula
\begin{equation}\label{e1.12}
\{ \smallint f, g   \} =  \{f_\la g \} |_{\la = 1}
\end{equation}
defines a representation of the Lie algebra $ \bar{\V} $ by derivations of the multiplicative PVA $ \V, $ which commute with $ S $.
\end{lemma}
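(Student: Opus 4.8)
The plan is to deduce every claim by specializing the PVA axioms at $\la=1$ (and, for the Jacobi-type identities, also $\mu=1$), using throughout that the quotient map obeys $\smallint\circ(S-1)=0$, equivalently $\smallint\circ S=\smallint$. I begin by recording two consequences of sesquilinearity M1. From $\{Sa_\la b\}=\la^{-1}\{a_\la b\}$ one gets
$$\{(S-1)f_\la g\}=(\la^{-1}-1)\{f_\la g\},$$
which vanishes at $\la=1$; and combining the two halves of M1 gives $S\{f_\la g\}=\{Sf_\la Sg\}=\la^{-1}\{f_\la Sg\}$, i.e. $\{f_\la Sg\}=\la\,S\{f_\la g\}$, so that $\{f_\la(S-1)g\}|_{\la=1}=(S-1)\{f_\la g\}|_{\la=1}$. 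These two identities give well-definedness of both formulas at once: replacing $f$ by $(S-1)h$ annihilates $\{f_\la g\}|_{\la=1}$ (so the action $\{\smallint f,g\}$ is well defined), while under $\smallint$ replacing $g$ by $(S-1)k$ changes the result by $\smallint(S-1)\{f_\la k\}|_{\la=1}=0$.

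For the Lie algebra structure on $\bar{\V}$ I would verify skewsymmetry and Jacobi. Unwinding M2 in coefficient form $g_{(j)}f=-S^j(f_{(-j)}g)$ gives $\{g_\la f\}|_{\la=1}=-\sum_j S^{-j}(f_{(j)}g)$; applying $\smallint$ and using $\smallint S^{-j}=\smallint$ collapses the sum to $-\smallint\{f_\la g\}|_{\la=1}$, i.e. $\{\smallint g,\smallint f\}=-\{\smallint f,\smallint g\}$. For Jacobi I specialize M3 at $\la=\mu=1$; because $\la,\mu$ are independent formal variables this substitution commutes with extracting the coefficients of the inner brackets, yielding a plain identity in $\V$ whose three terms, after applying $\smallint$ and re-reading them through $\{\smallint a,\smallint b\}=\smallint\{a_\la b\}|_{\la=1}$, are exactly the three terms of the Jacobi identity on $\bar{\V}$.

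For the representation I check that each operator $X_f:=\{\smallint f,\,\cdot\,\}=\{f_\la\,\cdot\,\}|_{\la=1}$ (writing $\{a_1 b\}$ for $\{a_\la b\}|_{\la=1}$) is a derivation of the PVA $\V$ commuting with $S$, and that $\smallint f\mapsto X_f$ is a Lie algebra homomorphism. Setting $\la=1$ in L1 shows $X_f$ derives the product; setting $\la=1$ in M3 (keeping $\mu$ free) gives $\{f_1\{g_\mu h\}\}=\{(\{f_1 g\})_\mu h\}+\{g_\mu\{f_1 h\}\}$, which says $X_f$ derives the $\la$-bracket as well; and $\{f_\la Sg\}=\la S\{f_\la g\}$ at $\la=1$ gives $X_f\circ S=S\circ X_f$. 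Finally, the homomorphism property is precisely M3 at $\la=\mu=1$: the left side is $(X_f X_g-X_g X_f)h$ and the right side is $\{(\{f_1 g\})_1 h\}=\{\smallint(\{f_1 g\}),h\}=\{\{\smallint f,\smallint g\},h\}=X_{\{f_1 g\}}h$, so $[X_f,X_g]=X_{\{f_1 g\}}$ with $\smallint\{f_1 g\}=\{\smallint f,\smallint g\}$.

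I do not anticipate a genuine obstacle, since each assertion is the $\la=1$ (and possibly $\mu=1$) specialization of a single axiom. The only points needing care are the bookkeeping of the powers of $S$ produced by M1 and M2 and the check that they are absorbed by $\smallint$, and the verification that setting $\la=\mu=1$ inside the nested brackets of M3 is legitimate — which it is, precisely because the two spectral variables are independent, so that evaluation commutes with the coefficientwise expansion of the inner bracket.
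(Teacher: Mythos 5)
Your proof is correct: every claim of the lemma (well-definedness in each argument, skewsymmetry, Jacobi, the derivation property, commutation with $S$, and the homomorphism property) is obtained exactly as you say, by specializing M1, M2, M3, L1 at $\la=1$ (and $\mu=1$) and absorbing the powers of $S$ via $\smallint\circ S=\smallint$. The paper offers no proof at all --- it declares the lemma ``immediate to see'' --- and your verification is precisely the direct axiom-specialization argument the authors intend (the multiplicative analogue of the corresponding PVA statement in \cite{BDSK09}), so there is nothing it does differently and no gap to report.
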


Choosing a Hamiltonian functional $ \int h \in \bar{\V}, $ we define the corresponding Hamiltonian equation
\begin{equation}\label{e1.13}
\frac{du}{dt} = \{ \smallint h, u \}, \ u \in \V. 
\end{equation}
A Hamiltonian function $ \int h_1 $ is called an integral of motion of this equation if $ \int \frac{dh_1}{dt} = 0 $ in virtue of \eqref{e1.13}, i.e. $ \{ \int h, \int h_1 \} = 0. $ The equation \eqref{e1.13} is called \emph{integrable} if it has infinitely many integrals of motion in involution, i.e. if $ \int h $ is contained in an infinite-dimensional abelian subalgebra of the Lie algebra $ \bar{\V} $ with bracket \eqref{e1.11}.

The most famous example of a differential-difference equation is the Volterra chain:
\begin{equation}\label{e1.14}
\frac{du}{dt} = u(u_1 - u_{-1})
\end{equation}
\noindent (applying $ S^n $ to both sides, we obtain its more traditional form $ \frac{du_n}{dt} = u_n (u_{n+1} - u_{n-1}), \ n \in \ZZ$). This equation can  be written in a Hamiltonian form \eqref{e1.13} with respect to the order 1 general type Poisson $ \la $-bracket $ \{ u_\la u \}_{u, 1} = \la uu_1 -\la^{-1} uu_{-1} $ by choosing $ \int u $ as a Hamiltonian functional. Moreover, equation \eqref{e1.14} is bi-Hamiltonian, i.e. it can be written in the form \eqref{e1.13} with a different, complementary type order 2 multiplicative Poisson $ \la $-bracket $ \{ u_\la u \}_{2, u, -1} $ and the Hamiltonian functional $ \half \int \log u. $
These two multiplicative Poisson $ \la $-brackets are compatible, hence the Lenard-Magri type scheme could be applied. We prove that indeed in this case the Lenard-Magri sequence can be infinitely extended, proving thereby that the Volterra chain \eqref{e1.14} has infinitely many linearly independent integrals of motion, i.e. is integrable (this is a special case of Proposition \ref{prop7.3} for $ F(u) = u $).



In the paper we develop the related theory of the variational complex. Of course some of it  has been done already in \cite{Ku85}.

Our work was stimulated by the book \cite{Ku85} and the paper \cite{KMW13}, from which we learned the basics of the subject. We are grateful to S. Carpentier for pointing out that the equations obtained from compatible Poisson $\la$-brackets of higher order via the Lenard-Magri scheme
reduce to the Volterra equation.

\section{Classification of multiplicative PVA of order $\leq 2$ in one variable}
Analogously to the case of ordinary PVA \cite{BDSK09}, we shall work in the framework of an algebra of ``difference'' functions $ \V. $ The simplest and most important example is the algebra of difference polynomials $ \Pc_\ell $ in $ \ell $ variables $ u^1, \ldots, u^\ell. $ This is the algebra of polynomials over $ \FF $
(our base field) in the variables $ u^i_n, $ where $ i \in I = \{ 1, \ldots, \ell \} $ and $ n \in \ZZ. $ This algebra carries an automorphism $ S, $ defined by $ Su^i_n = u^i_{n+1}, \ i \in I, \ n \in \ZZ.  $ It satisfies the following relation
\begin{equation}\label{e2.01}
  S \frac{\partial}{\partial u^i_n} = \frac{\partial}{\partial u^i_{n+1}} S,
  \,\,i\in I,  \,n \in \ZZ.
\end{equation}
\begin{definition}\label{def2.1}
  An algebra of difference functions in $ \ell $ variables $u^1,..., u^\ell$ is a commutative associative unital algebra $ \V, $ containing
  $ \Pc_\ell $,
  and endowed with commuting derivations $\frac{\partial}{\partial u^i_n}$ extending those on $ \Pc_\ell $, and an automorphism $S$ extending that on $ \Pc_\ell $, such that the following two properties hold:
\begin{enumerate}
\item[(i)] $ \frac{\ptl f}{\ptl u^i_n} = 0 $  for all but finitely many pairs $ (i,n); $
\item[(ii)] formula \eqref{e2.01} holds. 
	\end{enumerate}
\end{definition}
Let
\[ \cF = \left\{ f \in \V \, \middle| \, \frac{\ptl f}{\ptl u^i_n} = 0 \text{ for all } (i,n)   \right\} \, .
\]
Let $ \cC = \{ f \in \V \, | \,  Sf = f \} $ be the subalgebra of constants. Note that, by \eqref{e2.01} and axiom (i), $\cC \subset \cF $ and $ S \cF = \cF. $

The proof of the following Proposition is the same as for the ordinary PVA, see \cite{BDSK09}, Theorem 1.15.
\begin{proposition}\label{prop2.1}
Let $ \V $ be an algebra of difference functions in $ \ell $ variables $ u^1, \ldots, u^\ell. $ For each pair $ i,j \in I $ choose $ \{ u^i_\la u^j \} \in \V [\la, \la^{-1}]. $ Then
\begin{enumerate}
\item[(a)] The \emph{master formula}
\begin{equation}\label{e2.02}
\{ f_\la g \} = \sum_{\substack{i,j \in I \\ m,n \in \ZZ}} \frac{\ptl g}{\ptl u^j_n} (\la S)^n \{ u^i_{\la S} u^j \}_{\rightarrow} (\la S)^{-m} \frac{\ptl f}{\ptl u^i_m} 
\end{equation}
defines a multiplicative $ \la $-bracket on $ \V $, satisfying the sesquilinearity
M1 and the Leibniz rules L1 and L2 (see introduction), which extends the given $ \la $-brackets on the generators $ u^i, \ i \in I. $
\item[(b)] Formula \eqref{e2.02} satisfies the skewsymmetry M2, iff it holds on each pair of generators. 
\item[(c)] Assuming that the skewsymmetry holds on each pair of generators, the multiplicative $ \la $-bracket \eqref{e2.02} satisfies the Jacobi identity M3, iff it holds on any triple of generators.  
\end{enumerate}
\end{proposition}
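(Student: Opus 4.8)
The plan is to transcribe the proof of the corresponding statement for ordinary Poisson vertex algebras (\cite{BDSK09}, Theorem 1.15) under the dictionary $\ptl \rightsquigarrow S$, $\la+\mu \rightsquigarrow \la\mu$, now allowing Laurent dependence on $\la$. For part (a), setting $f=u^i$, $g=u^j$ in \eqref{e2.02} leaves only the $m=n=0$ term, which returns the prescribed $\{u^i_\la u^j\}$, so the extension property holds. The left Leibniz rule L1 is immediate because the second argument enters \eqref{e2.02} only through $\ptl g/\ptl u^j_n$, standing to the \emph{left} of the operator $(\la S)^n$; the Leibniz rule for $\ptl/\ptl u^j_n$ together with commutativity of $\V$ yields L1 at once. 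The right Leibniz rule L2 is symmetric, but now the first argument enters through $\ptl f/\ptl u^i_m$, standing to the \emph{right} of $(\la S)^{-m}$, so the shift acts on the two Leibniz terms and produces exactly the $\{\cdot_{\la S}\cdot\}_\rightarrow$ form of L2. Both sesquilinearity relations M1 come from \eqref{e2.01}: replacing $f$ by $Sf$ and reindexing the sum releases one factor $\la^{-1}$, while applying $S$ to the whole formula and reindexing yields $\{Sf_\la Sg\}$.

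For part (b) the ``only if'' direction is trivial. For ``if'', observe that $(f,g)\mapsto-{}_\leftarrow\{g_{(S\la)^{-1}}f\}$ is itself a $\la$-bracket satisfying the sesquilinearity M1 and the Leibniz rules L1, L2; this is a direct check from the corresponding properties of $\{\cdot_\la\cdot\}$, the skewsymmetry operation interchanging the roles of L1 and L2. Since the master formula \eqref{e2.02} produces the \emph{unique} $\la$-bracket with these properties extending prescribed values on the generators, this map coincides with $\{\cdot_\la\cdot\}$ as soon as the two agree on generators, and agreement on generators is precisely skewsymmetry M2 on generators. Hence M2 holds on all of $\V$ iff it holds on each pair of generators. (Equivalently, one introduces the skewsymmetry defect $D(f,g):=\{f_\la g\}+{}_\leftarrow\{g_{(S\la)^{-1}}f\}$, shows it obeys Leibniz identities in each slot, and propagates $D\equiv0$ from generators to all of $\V$.)

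For part (c), let $J(a,b,c)$ denote the difference of the two sides of the Jacobi identity M3; by (a) and (b) we may assume M1, M2, L1, L2 hold throughout $\V$. Again the ``only if'' direction is trivial, and for ``if'' I would prove that $J$ is Leibniz compatible, i.e. that $J(a,b,cd)$, $J(a,bd,c)$ and $J(ad,b,c)$ each reduce to $\V$-linear combinations of values of $J$ on arguments of strictly lower degree, the residual cross terms cancelling by virtue of sesquilinearity and skewsymmetry; an induction on total degree in the $u^i_n$ then reduces M3 to triples of generators. This Leibniz-compatibility computation is the main obstacle: one must expand each of the three triple brackets by L1/L2, transport every occurrence of $S$ and of the spectral parameters $\la,\mu$ to its prescribed side using \eqref{e2.01} and the $_\leftarrow,_\rightarrow$ conventions, and verify that every term not proportional to a value of $J$ cancels in pairs. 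This is the substantive content of \cite{BDSK09}, Theorem 1.15; the only adjustments in the multiplicative setting are the replacement of the additive shift $\la+\mu$ by the product $\la\mu$, of $\ptl$ by $S$, and the harmless appearance of negative powers of $\la$, none of which disturbs the cancellation pattern, which is why the proof carries over essentially verbatim.
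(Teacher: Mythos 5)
Your part (a) is correct and is exactly the paper's argument (the paper simply says the proof is the same as \cite{BDSK09}, Theorem 1.15, and your dictionary $\ptl\rightsquigarrow S$, $\la+\mu\rightsquigarrow\la\mu$ is the right one). The genuine gap is in your reductions to generators in parts (b) and (c): both presuppose that $\V$ is generated, as a difference algebra, by the variables $u^i$. In (b) you invoke the claim that the master formula produces the \emph{unique} $\la$-bracket satisfying M1, L1, L2 with prescribed values on the generators, and in (c) you run an induction on the total degree of the arguments in the $u^i_n$. But the proposition is stated for an arbitrary algebra of difference functions (Definition \ref{def2.1}), which is only required to \emph{contain} $\Pc_\ell$; it need not be generated by the $u^i_n$, and a general element of $\V$ has no degree. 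The paper's Remark \ref{rem2.1} deliberately asserts uniqueness only for $\Pc_\ell$ and for extensions obtained by adjoining solutions of polynomial or differential equations, and indeed uniqueness fails in general: take $\ell=1$ and $\V=\Pc_1[\,w_n \mid n\in\ZZ\,]$ with $Sw_n=w_{n+1}$ and $\ptl w_n/\ptl u_m=0$, which satisfies Definition \ref{def2.1}; with $\{u_\la u\}=0$, both the zero bracket and the bracket given by the master formula in the two variables $u,w$ with $\{w_\la w\}=\la-(\la S)^{-1}$, $\{u_\la w\}=0$, satisfy M1, L1, L2 and agree (trivially) on the generator $u$, yet they differ. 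So ``agreement on generators plus M1, L1, L2'' does not pin the bracket down, and your deduction of M2 --- and likewise the degree induction for M3 --- collapses for general $\V$. (Your argument is fine when $\V=\Pc_\ell$, or in the special extensions covered by Remark \ref{rem2.1}, but that is not what is being claimed.)

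The route that works for arbitrary $\V$, and is what the paper means by ``the same as \cite{BDSK09}, Theorem 1.15,'' is a direct computation: expand \emph{both} sides of M2 (resp.\ M3) through the master formula \eqref{e2.02}, using the commutation rule \eqref{e2.01} to move all powers of $S$ past the partial derivatives. The skewsymmetry defect (resp.\ the Jacobiator) of arbitrary $f,g$ (resp.\ $f,g,h$) then comes out as a sum, over indices and shifts, of terms in which the corresponding defect evaluated on a pair (resp.\ triple) of \emph{generators} appears sandwiched between partial derivatives of the arguments and powers of $\la S$, $\mu S$. Vanishing on generators therefore forces vanishing everywhere, with no appeal to uniqueness or to any generation or degree hypothesis. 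That computation is precisely the step your proposal bypasses, and it is the actual content of parts (b) and (c).
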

\begin{remark}\label{rem2.1}
The master formula defines a unique multiplicative $ \la $-bracket on $ \Pc_\ell, $ satisfying the sesquilinearity and the Leibniz rules, with the given $ \{u^i_\la u^j  \}, \ i,j \in I.  $
This uniqueness holds also if $\V$ is obtained from $\Pc_\ell$ by adjoining solutions
of polynomial or differential equations with coefficients in $\V$, like inverses of non-zero elements or exponentials.
\end{remark}
\begin{definition}\label{def2.2}
A $\lambda$-bracket, defined by the master formula \eqref{e2.02}, is called a multiplicative Poisson $\lambda$-bracket if it defines a structure of a multiplicative PVA on $\V$.
\end{definition}
\begin{theorem}\label{th2.1}
  Let $ \V $ be an algebra of difference functions in one variable $ u $ without zero divisors and such that $\cC=\cF$ is a field. Then any multiplicative Poisson $ \la $-bracket on $ \V $ of order $ \leq 2 $
  is either of general type (= linear combination of $\lambda$-brackets \eqref{e1.10} with $ k \leq 2 $), or is a linear combination of the $\lambda$-bracket
  $\{._{\lambda}.\}_{2,g,-1}$
  of complementary type, given by
  \eqref{e1.13a},
  and the $\lambda$-bracket
  $\{._{\lambda}.\}_{1,g}$ of general type, given by \eqref{e1.10}.
\end{theorem}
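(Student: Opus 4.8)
The plan is to reduce the whole problem to the Jacobi identity on a single triple and then solve the resulting coefficient equations. By the skewsymmetric normal form \eqref{e1.9}, any $\la$-bracket on $\V$ of order $\le 2$ is
\[ \{u_\la u\} = (\la - \la^{-1}S^{-1})f_1 + (\la^2 - \la^{-2}S^{-2})f_2, \qquad f_1,f_2\in\V, \]
and, as observed right after \eqref{e1.9}, such a bracket automatically satisfies sesquilinearity M1 and skewsymmetry M2. Hence by Proposition \ref{prop2.1}(c) the only remaining requirement is the Jacobi identity M3 on $a=b=c=u$, so the theorem amounts to determining all pairs $(f_1,f_2)$ for which $\{u_\la\{u_\mu u\}\} - \{u_\mu\{u_\la u\}\} = \{\{u_\la u\}_{\la\mu} u\}$.

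I would compute the three terms via the master formula \eqref{e2.02}. Writing $\{u_\mu u\}=\sum_k b_k\mu^k$ with $b_2=f_2$, $b_1=f_1$, $b_{-1}=-S^{-1}f_1$, $b_{-2}=-S^{-2}f_2$, the master formula in one variable gives
\[ \{u_\la\{u_\mu u\}\}=\sum_{k,l,n}\frac{\partial b_l}{\partial u_n}\,S^n(b_k)\,\la^{n+k}\mu^{l}, \]
an analogous expression for $\{u_\mu\{u_\la u\}\}$ with $\la,\mu$ interchanged, and
\[ \{\{u_\la u\}_{\la\mu}u\}=\sum_{k,m,p}b_p\,S^{p-m}\!\Big(\frac{\partial b_k}{\partial u_m}\Big)\la^{\,p-m+k}\mu^{\,p-m}. \]
Equating coefficients of each monomial $\la^a\mu^c$ turns M3 into a finite but large system of functional-differential equations for $f_1,f_2$. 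I would first read off the monomials of extreme degree: these force $\partial f_2/\partial u_n=0$ for $n\notin\{0,1,2\}$ and $\partial f_1/\partial u_n=0$ for $n\notin\{0,1\}$, so any solution is automatically local, with $f_2$ depending only on $u_0,u_1,u_2$ and $f_1$ only on $u_0,u_1$.

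The structural core is then the coefficient of $\la^4\mu^2$, which reduces to
\[ \frac{\partial f_2}{\partial u_2}\,S^2(f_2)=f_2\,S^2\!\Big(\frac{\partial f_2}{\partial u_0}\Big). \]
Since $\V$ has no zero divisors, dividing by $f_2\,S^2(f_2)$ separates variables: the left-hand side depends only on $u_0,u_1,u_2$ and the right only on $u_2,u_3,u_4$, so $\partial_{u_0}\log f_2$ and $\partial_{u_2}\log f_2$ must be one and the same function of a single variable. Integrating (here using that $\cF$ is a field) yields $f_2 = g(u)\,g(u_2)\,\varphi(u_1)$ for a single function $g$ and some $\varphi$. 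The remaining, lower-degree coefficient equations then couple $f_1$ to $f_2$: a parallel separation argument forces $f_1=g(u)g(u_1)\psi$ with the \emph{same} $g$, and the mid-degree equations collapse to ODEs relating $\varphi,\psi$ to $g$. At this point the dichotomy appears. Either $\varphi$ is constant, in which case one recovers the general-type combination of \eqref{e1.10} with $k\le 2$; or $\varphi=F$ is non-constant, and the equations force $gF'=F$ together with $f_1=g(u)g(u_1)(F(u)+F(u_1))$ up to an additive general-type order-$1$ term, i.e.\ a linear combination of $\{._\la.\}_{2,g,-1}$ and $\{._\la.\}_{1,g}$.

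I expect the main obstacle to be the bookkeeping and solution of the full coefficient system. While the leading equation cleanly yields the product form $f_2=g(u)g(u_2)\varphi(u_1)$, extracting from the remaining monomials that $f_1$ shares the same $g$ and that the only possibilities are $\varphi=\mathrm{const}$ or $gF'=F$ requires repeatedly exploiting the no-zero-divisor hypothesis (to divide) and the field assumption on $\cF$ (to integrate logarithmic derivatives). Verifying that the two resulting families actually satisfy M3 is then immediate, since the examples \eqref{e1.10} and \eqref{e1.13a} were already checked in the introduction.
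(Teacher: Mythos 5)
Your proposal is correct and takes essentially the same approach as the paper's proof: reduction via the master formula to the Jacobi identity on the single triple $(u,u,u)$, locality of $f_1,f_2$ from extreme-degree monomials (the paper's Lemma \ref{lem2.3}), separation of variables in the leading coefficient equation \eqref{e2.04} to obtain $f_2=g(u)\varphi(u_1)g(u_2)$, and then the lower-degree coefficient equations forcing the dichotomy between $\varphi$ constant (general type) and $g\varphi'=a\varphi$ with $a\neq 0$ (complementary type plus an order-$1$ general-type term). The differences from the paper (e.g.\ keeping the constant $a$ un-normalized until the end, as in its equation \eqref{e2.11}) are purely presentational.
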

\begin{proof}
Let $ \{ u_\la u  \} = \sum_{k \in \ZZ} \la^k f_k, \  f_k \in \V. $ Using the master formula, the Jacobi identity M3 (from the Introduction) for $ a = b = c = u $ becomes:
\begin{equation}\label{e2.03}
\begin{aligned}
& \sum_{i,k} \la^{i+k} (S^i f_k) \frac{\ptl}{\ptl u_i} \sum_j \mu^j f_j - \sum_{i,k} \mu^{i+k} (S^i f_k) \frac{\ptl }{\ptl u_i} \sum_j \la^j f_j \\
  = & \sum_{i,k} (\la \mu)^{i+k} f_k S^{i+k} ( \frac{\ptl}{\ptl u_{-i}}
  \sum_j \la^j f_j ).
\end{aligned}
\end{equation}
Note that for the $\la$-bracket, satisfying skewsymmetry, the coefficients of
$\la^m \mu^n$, $\la^n \mu^m$, $\la^{-n}\mu^{m-n}$ and $\la^{m-n}  \mu^{-n}$ in
\eqref{e2.03} give the same equation on the $f_j$'s. Hence all the equations come from the coefficients of $\la^m \mu^n$ with $0<m<n$. Hence for such a pair $(m,n)$ for the
multiplicative Poisson $\la$-bracket of degree $N$ the corresponding term
appears in \eqref{e2.03} iff $0<m\leq N$ and $m<n\leq m+N$. The number of such pairs is $N^2$.

We now use the following lemma.
\begin{lemma}\label{lem2.3}
	Identity \eqref{e2.03} and skewsymmetry imply that $ f_k = f_k (u, u_1, \ldots, u_k) $ for $ k>0. $
\end{lemma}
\begin{proof}
  Let
  $ N = \max \{ i \, | \, f_i \neq 0 \}, i_k = \max \{ i \, | \,
  \frac{\ptl f_k}{\ptl u_i}\neq 0 \}, $
  and suppose that $ i_k \geq k+1. $ Computing the coefficient of $ \la^{i_k +N} \mu^k $ in \eqref{e2.03}, we obtain: $ (S^{i_k} f_N) \frac{\ptl f_k}{\ptl u_{i_k}} - 0 = 0, $ hence $ \frac{\ptl f_k}{\ptl u_{i_k}} = 0, $ a contradiction. Hence $ \frac{\ptl f_k}{\ptl u_i} = 0 $ for $ i > k $ if $ k>0. $ In a similar way we prove that $ \frac{\ptl f_{-k}}{\ptl u_i} = 0 $ for $ i < -k $ if $ k>0. $ The lemma follows due to the skewsymmetry relation $ f_k = -S^k f_{-k}, \ k \in \ZZ.  $
\end{proof}

By \eqref{e1.9} and Lemma \ref{lem2.3}, the $ \la $-bracket $ \{ u_\la u \} $ of order $N $ has the form
\begin{equation}\label{e2.00}
  \{ u_\la u \} = \sum_{k=1}^N(\la^k - (\la S)^{-k}) f_k(u,u_1,...,u_k).
\end{equation}

Let now $N\leq 2$. Then \eqref{e2.03} is equivalent to the following four equations on $ f_1 = f_1 (u, u_1)$ and $ f_2 = f_2 (u, u_1, u_2), $ which correspond to coefficients of $\la^2\mu^4,
\la^2\mu^3,\la \mu^3, \la \mu^2$, respectively:
\begin{equation}\label{e2.04}
f_2 S^2  \frac{\ptl f_2}{\ptl u}= (S^2 f_2) \frac{\ptl f_2}{\ptl u_2},
\end{equation}
\begin{equation}\label{e2.05}
  (S^2 f_1) \frac{\ptl f_2}{\ptl u_2} + (Sf_2) \frac{\ptl f_2}{\ptl u_1} = f_2 S^2
  \frac{\ptl f_1}{\ptl u},
\end{equation}
\begin{equation}\label{e2.06}
(Sf_2) \frac{\ptl f_1}{\ptl u_1} = f_1 S  \frac{\ptl f_2}{\ptl u}  + f_2 S  \frac{\ptl f_2}{\ptl u_1},
\end{equation}
\begin{equation}\label{e2.07}
(S f_1) \frac{\ptl f_1}{\ptl u_1} + (Sf_1) \frac{\ptl f_2}{\ptl u_2} = f_1 \frac{\ptl f_2}{\ptl u} + f_1 S \left( \frac{\ptl f_1}{\ptl u}  \right) - f_2 \frac{\ptl f_1}{\ptl u} + f_2 S  \frac{\ptl f_1}{\ptl u_1} .
\end{equation}

First, consider the case $N=2$, i.e. $ f_2 \neq 0. $ Note that, by \eqref{e2.04},
\begin{equation}\label{e2.08}
\frac{\ptl f_2}{\ptl u_2} / f_2 = S^2  (\frac{\ptl f_2}{\ptl u} / f_2) .
\end{equation}
Since the LHS (resp. RHS) of this equation is a function of $ u, u_1, u_2  $ (resp. $ u_2, u_3, u_4 $), we conclude that both sides are functions of $ u_2. $ It follows from \eqref{e2.08} that $ \log f_2 $ is a sum of a function in $ u_2 $ and a function in $ u, u_1 $ (resp. a sum of a function in $ u $ and a function in $ u_1, u_2 $). Hence 
\[ f_2 = p(u_2) \varphi (u, u_1) = g(u) \psi (u_1, u_2).  \]
It follows that $ f_2 / p(u_2) $ is independent of $ u_2, $ hence $ \psi (u_1, u_2) / p(u_2) = h(u_1). $ Thus 
\[ f_2 = g(u) h(u_1) p(u_2). \]
It follows that
\[ \frac{\ptl f_2}{\ptl u_2} / f_2 = \frac{\ptl }{\ptl u_2} \log p(u_2), \ \frac{\ptl f_2}{\ptl u} / f_2 = \frac{\ptl}{\ptl u} \log g(u). \]
Substituting this in \eqref{e2.08}, we obtain, using \eqref{e2.01}, 
\[ \frac{\ptl}{\ptl u_2} \left( \log p(u_2) - \log g(u_2) \right) = 0. \]
Hence $ p (u_2) = cg(u_2), $ where $ c $ is a non-zero constant. Absorbing this constant in $ h(u_1), $ we obtain:
\begin{equation}\label{e2.09}
f_2 (u,u_1, u_2) = g(u) h(u_1) g(u_2).
\end{equation}
Note that, conversely, \eqref{e2.09} implies \eqref{e2.04}.

Next, we analyze equation \eqref{e2.05}. Substituting in it \eqref{e2.09} and dividing both sides by $ g(u) h(u_1), $ we obtain
\begin{equation}\label{e2.10}
(S^2 f_1) \frac{\ptl g(u_2)}{\ptl u_2} + g(u_2) h(u_2) g(u_3) \frac{g(u_1)}{h(u_1)} \frac{\ptl h(u_1)}{\ptl u_1} = g(u_2) S^2 \frac{\ptl f_1}{\ptl u}. 
\end{equation}
Since the first term in the LHS and the RHS are independent of $ u_1, $ we conclude that the second term in the LHS is independent of $ u_1. $ Hence\begin{equation}\label{e2.11}
\frac{g(u)}{h(u)} \frac{\ptl h(u)}{\ptl u} = a \in \cC. 
\end{equation}
First consider the case when $ h'(u) \neq 0. $ Then we can see from \eqref{e2.09} that 
\begin{equation}\label{e2.12}
 f_2 = g(u) g(u_2) h(u_1), \text{ where } g(u) = a \frac{h(u)}{h'(u)},\,a\neq 0, 
\end{equation}
 which is the coefficient of $ \la^2 $ in \eqref{e1.10} with $ F = h. $
 Substituting \eqref{e2.11} in \eqref{e2.10}, we obtain the following equation, to which $ S^2 $  is applied:
\[ f_1 (u,u_1) g' (u) + a g(u) h(u) g(u_1) = g(u) \frac{\ptl f_1 (u,u_1)}{\ptl u}. \]
Dividing both sides by $ g(u)^2, $ we obtain:
\[ \frac{\ptl}{\ptl u} \frac{f_1 (u,u_1)}{g(u)} = a \frac{h(u)}{g(u)} g(u_1) = h' (u) g(u_1) \]
Integrating by $ u $ and multiplying by $ g(u),  $ we obtain
\begin{equation}\label{e2.13}
f_1 (u, u_1) = g(u) h(u) g(u_1) + g(u) A (u_1).
\end{equation}

Next, we use equation \eqref{e2.06}, in which we substitute \eqref{e2.12} to get:
\[ g(u_1) \frac{\ptl f_1 (u,u_1)}{\ptl u_1} = f_1 (u, u_1)
\frac{\ptl g(u_1)}{\ptl u_1} + a g(u) g(u_1) h(u_1) \, . \]
Dividing both sides by $ g(u_1)^2, $ we obtain 
\[ \frac{\ptl }{\ptl u_1} \left( \frac{f_1 (u,u_1)}{g (u_1)} \right) = a g(u) \frac{h(u_1)}{g(u_1)}, \]
and, using \eqref{e2.11}, we get
\[ \frac{\ptl}{\ptl u_1} \left( \frac{f_1 (u,u_1)}{g(u_1)} \right) = g(u) h'(u_1) \, . \]
Integrating by $ u_1, $  we have, after multiplying both sides by $ g(u_1), $
\begin{equation}\label{e2.14}
f_1 (u, u_1) = g(u) h(u_1) g(u_1) + g(u_1) B(u).
\end{equation}
Equating the RHS's of \eqref{e2.13} and \eqref{e2.14} and dividing by $ g(u) g(u_1),  $ we obtain:
\[ \frac{A (u_1) - g(u_1)h(u_1)}{g(u_1)} = \frac{B(u) - g(u) h(u)}{g(u)} \, . \]
It follows that both sides are equal to $ c \in \cC, $ hence
\[ A(u_1) = g(u_1) h(u_1) + c g(u_1), \]
and, by \eqref{e2.13}, we see that
\[ f_1 (u,u_1) = g(u) g(u_1) (h(u) + h(u_1) + c), \] 
completing the case when $ h'(u) \neq 0. $
 
Finally, consider the case $ h'(u) = 0,  $ i.e. $ h(u)  \in \cC.  $ This case also includes the case $ f_2 = 0 $ by letting $ h = 0 $ in \eqref{e2.09}.
In this case \eqref{e2.10} and \eqref{e2.12} become:
\[ \frac{\ptl}{\ptl u_i} (\log f_1) = \frac{\ptl}{\ptl u_i} \log g(u_i) \text{ for } i = 0, 1, \]
so that $ \log f_1 = \log g(u) + C(u_1) = \log g(u_1) + D(u). $ Hence $ f_1 = c_1 g(u) g(u_1), \ c_1 \in \cC, $ and by \eqref{e2.09}, $ f_2 = c_2 g(u) g(u_2)\,, c_2\in \cC.  $

It is straightforward to check that the $f_1$ and $f_2$, obtained above, do satisfy equations \eqref{e2.04}--\eqref{e2.07}, completing the proof of Theorem
\ref{th2.1}.
\end{proof}

\begin{remark}\label{rem2.7}
If we drop the assumption that $ \cC = \cF $ in Theorem \ref{th2.1}, the classification of multiplicative Poisson $ \la $-brackets of order $ \leq 2 $ is similar, but a little different. Namely, one has the following two possibilities:
\begin{enumerate}
\item[(i)] (general type)
\[ f_j = c_jg S^j (g), \ j=1, 2, \,\hbox{where}\,  c_j \in \cF, \ g \in \V, \, \frac{\ptl g}{\ptl u_i}=0 \, \hbox{for}\,  i \geq 1; \]
\item[(ii)] (complementary type)
\[ f_1 = gS(g) ( \frac{S^{-1}(a)}{a} F + S ( \frac{S(a)}{a} F) +\frac{c}{aS(a)} ), \quad  f_2 = gS^2 (g) S(F),  \]
where $g, F\in \V, $ $a\in \cF,\, c\in \cC,\, \ ag \neq 0,\, \frac{\ptl g}{\ptl u_i} = 0 = \frac{\ptl F}{\ptl u_i} $ for $ i \geq 1, \ g \frac{\ptl F}{\ptl u} = aF$.
\end{enumerate}
\end{remark}

\begin{remark}\label{rem2.8}
Given a Lie algebra $ \fg $, we can construct a multiplicative Lie conformal algebra $ \text{Cur } \fg = \FF [S, S^{-1}]\fg $ with the multiplicative $ \la $-bracket 
\[ [a_\la b] = [a,b]\, \text{ for }\, a,b \in \fg, \]
extended by sesquilinearity M1. We conjecture that a finite rank over $ \CC [S, S^{-1}] $ simple multiplicative Lie confornal algebra is isomorphic to $ \text{Cur } \fg $ for some simple finite-dimensional Lie algebra $ \fg. $ For example, it is easy to check that any rank 1 multiplicative Lie conformal algebra is trivial. Indeed, if $ [u_\la u] = f(\la, S)u $ for some $ f(\la, S) \in \FF [\la, \la^{-1}. S, S^{-1}], $ by the sesquilinearity M1, the Jacobi identity for $ a = b= c = u $ reads:
\begin{equation}\label{e2.15}
f(\mu, \la S) f(\la, S) - f(\la, \mu S) f (\mu, S) = f(\la, (\la \mu)^{-1}) f(\la \mu, S).
\end{equation}
Suppose that $ f \neq 0,  $ and let $ n $ be the maximal power of $ S, $ appearing in $ f $. Then it is immediate to see that the maximal power of $ S $ which occurs in the LHS (resp. RHS) of \eqref{e2.15} is $ 2n  $ if $ n \neq 0 $ and is negative if $ n = 0 $ (resp. is $ n $), a contradiction. 
\end{remark}

In the sequel we shall need the following 
\begin{lemma}
  \label{lem2.7}
  Let $\V$ be an algebra of difference functions, such that the subalgebra of
  constants $\cC$ is a domain and the subalgebra $\cF$ contains no eigenvectors of $S$ other than from $\cC$. Let $P(x)\in \cC[x,x^{-1}]$ be such that $P(-1)\neq 0$. Then the kernel of $P(S)$ in $\V$ is zero.
\begin{proof}
  It suffices to show that the kernel of $ S + a $ is zero for any constant $ a \neq -1. $  Let $ f \in \V  $ be outside of $\cF$ and
   such that $ (S+a)f = 0. $ Then there exists $i$,
  such that $ \frac{\ptl f}{\ptl u^i_n} \neq 0 $ for some integer $n . $
  Take maximal such $n$.
  By \eqref{e2.01} we have:
	\[ S \frac{\ptl}{\ptl u^i_n} = \frac{\ptl}{\ptl u^i_{n+1}} (S+a) - a \frac{\ptl}{\ptl u^i_{n+1}} \, . \]
	Applying both sides to $ f, $ we obtain 
	$ S \frac{\ptl f}{\ptl u^i_n} = 0, $
	a contradiction.
  \end{proof}
\end{lemma}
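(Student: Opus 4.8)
The statement reduces immediately, as the author indicates, to showing that $\Ker(S+a)=0$ for every constant $a\neq -1$: one factors $P(x)$ over the domain $\cC$ (or over its fraction field), and since $P(-1)\neq 0$, none of the linear factors $x+a$ arising in $P(S)$ has $a=-1$; the kernel of a product of commuting operators is controlled by the kernels of the factors, so it suffices to kill each factor separately. I would make this reduction explicit first, being slightly careful that $P$ need not split over $\cC$ itself — but since we only want the kernel to vanish, it is enough to exhibit \emph{one} factorization with a factor $S+a$, $a\neq -1$, that already has zero kernel, or to pass to an algebraic closure of $\mathrm{Frac}(\cC)$ where the factorization is genuine and tensoring does not create new kernel elements. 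The cleanest route is simply: $P(-1)\ne0$ guarantees $-1$ is not a root, so over $\overline{\mathrm{Frac}(\cC)}$ we write $P(x)=c\prod_i(x+a_i)$ with all $a_i\neq -1$, and reduce to the single-factor claim.

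\emph{The single-factor claim.} Suppose $f\in\V$ satisfies $(S+a)f=0$ with $a\neq -1$. The dichotomy is whether $f\in\cF$ or not. If $f\in\cF$, then $f$ is an eigenvector of $S$ with eigenvalue $-a$, and by hypothesis $\cF$ contains no eigenvectors of $S$ outside $\cC$; hence $f\in\cC$, so $Sf=f$, forcing $(1+a)f=0$, and since $\cC$ is a domain and $a\neq -1$ we get $f=0$. So the only way to have a nonzero kernel element is to have one \emph{outside} $\cF$, and this is exactly the case the author's displayed computation rules out.

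\emph{Killing the case $f\notin\cF$.} Here I would run precisely the argument sketched in the excerpt: since $f\notin\cF$, some $\tfrac{\ptl f}{\ptl u^i_n}\neq0$, and I choose $n$ maximal with this property (legitimate by axiom (i), which makes the set of such $n$ finite). The key identity is the commutation relation \eqref{e2.01} rearranged as
\[
S\frac{\ptl}{\ptl u^i_n}=\frac{\ptl}{\ptl u^i_{n+1}}(S+a)-a\frac{\ptl}{\ptl u^i_{n+1}}.
\]
Applying both sides to $f$ and using $(S+a)f=0$ kills the first term on the right, while maximality of $n$ makes $\frac{\ptl f}{\ptl u^i_{n+1}}=0$, so the second term vanishes too; hence $S\frac{\ptl f}{\ptl u^i_n}=0$. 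But $S$ is an automorphism of $\V$, so it is injective, giving $\frac{\ptl f}{\ptl u^i_n}=0$, contradicting the choice of $n$. Therefore no kernel element outside $\cF$ exists, and combined with the previous paragraph the kernel of $S+a$ is zero, proving the lemma.

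The only genuine subtlety — the ``hard part'' insofar as there is one — is the reduction from $P(S)$ to linear factors: one must ensure that factoring over an extension of $\cC$ does not manufacture spurious kernel vectors and that the hypothesis $P(-1)\neq0$ is used correctly to exclude the forbidden eigenvalue $-1$. Once that bookkeeping is in place, the heart of the argument is the short maximal-$n$ computation above, which is entirely routine given \eqref{e2.01} and the injectivity of $S$.
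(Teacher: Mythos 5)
The core of your argument coincides with the paper's. Your ``killing the case $f\notin\cF$'' is verbatim the paper's proof: choose $i$ and the maximal $n$ with $\frac{\ptl f}{\ptl u^i_n}\neq 0$ (legitimate by axiom (i)), rewrite \eqref{e2.01} using $(S+a)f=0$ and maximality to get $S\frac{\ptl f}{\ptl u^i_n}=0$, and conclude by injectivity of $S$. Your explicit treatment of the case $f\in\cF$ (eigenvector $\Rightarrow$ in $\cC$ $\Rightarrow$ $(1+a)f=0$ $\Rightarrow$ $f=0$, since $\cC$ is a domain and $a\neq -1$) is exactly what the paper's hypotheses are there for, even though its proof leaves this case implicit; making it explicit is an improvement.

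The genuine gap is in your reduction to linear factors. You assert that $P(-1)\neq 0$ forces every factor $x+a_i$ of $P$ to have $a_i\neq -1$. This is backwards: the root of $x+a_i$ is $-a_i$, so ``$-1$ is not a root'' gives $-a_i\neq -1$, i.e.\ $a_i\neq 1$, not $a_i\neq -1$. The hypothesis that actually matches the single-factor claim $\Ker (S+a)=0$ for $a\neq -1$ is $P(1)\neq 0$. Indeed, as literally stated the lemma is false: $P(x)=x-1$ has $P(-1)=-2\neq 0$, yet $\Ker (S-1)=\cC\neq 0$. The paper's ``$P(-1)\neq 0$'' is a sign typo for ``$P(1)\neq 0$'' (its own reduction sentence, and the later use for the operator $1+S$ in Section 7, are consistent only with the corrected hypothesis); instead of detecting this discrepancy, your write-up bridges it with a false implication, which is precisely the step one would want a proof of the reduction to get right. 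Separately, the base change you dismiss as bookkeeping is not routine: (a) injectivity of $\V\to\V\otimes_\cC\overline{\mathrm{Frac}(\cC)}$ requires $\V$ to be torsion-free over $\cC$, which is not among the hypotheses; and (b) your case $f\in\cF$ relies on the hypothesis that $\cF$ contains no $S$-eigenvectors outside $\cC$, but after extension of scalars the relevant eigenvectors lie in $\cF\otimes_\cC\overline{\mathrm{Frac}(\cC)}$ with eigenvalues in $\overline{\mathrm{Frac}(\cC)}$, and the hypothesis over $\cC$ does not formally imply its analogue there (eigenvectors can appear only after extension, e.g.\ when $S$ acts like a rotation on a rank-two piece of $\cF$). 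The paper is equally silent on the reduction --- it merely asserts ``it suffices'' --- so this last point is a shared gap, but your proposal presents it as settled when it is not.
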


\section{Evolution difference equations and related notions}

An \emph{evolution difference equation} over an algebra of difference functions $ \V $ in $ \ell $ variables $u=\{ u^i\}_{i\in I} $ is a system of equations of the form
\begin{equation}\label{e3.01}
\frac{du}{dt} = P,\, \text{ where } P = (P^i)_{i \in I} \in \V^\ell.
\end{equation}
Applying $ S^n $ to both sides, we obtain a system of ordinary differential equations
on all $ u^i_n: $
\[ \frac{du^i_n}{dt} = S^n (P^i),\, i \in I, n \in \ZZ.  \]

Recall that elements of $ \bar{\V} = \V / (S-1) \V $ are called \emph{Hamiltonian functionals} and are denoted by $ \int f, f \in \V. $ Such an element is called an \emph{integral of motion} of \eqref{e3.01} if $ \frac{d}{dt} \int f = 0 $ in virtue of \eqref{e3.01}. Using the chain rule, this condition becomes
\begin{equation}\label{e3.02}
\smallint X_P (f) = 0,
\end{equation}
where 
\begin{equation}\label{e3.03}
X_P = \sum_{\substack{n \in \ZZ \\ i \in I}} S^n (P^i) \frac{\ptl}{\ptl u^i_n}
\end{equation}
is the \emph{difference evolutionary vector field,} attached to $ P \in \V^\ell. $ This is a derivation of the algebra $ \V, $ commuting with the automorphism $ S.  $

Since, by definition, $ \int S(f) = \int f, $ we have integration by parts
\begin{equation}\label{e3.04}
\smallint S^n (f)g = \smallint f S^{-n} (g), \ n \in \ZZ.
\end{equation}
Applying integration by parts to \eqref{e3.02}, \eqref{e3.03}, we obtain that $ \int f $ is an integral of motion of the equation \eqref{e3.01} if and only if
\begin{equation}\label{e3.05}
\smallint \frac{\delta f}{\delta u} \cdot P = 0 \,.
\end{equation}
Here $ \frac{\delta f}{\delta u}\in \V^{\ell} $ is the column vector of \emph{difference variational derivatives}
\begin{equation}\label{e3.06}
\frac{\delta f}{\delta u^i} = \sum_{n \in \ZZ} S^{-n} \left( \frac{\ptl f}{\ptl u^i_n} \right),
\end{equation}
and $ P \cdot Q =\sum_{i\in I}P^iQ^i$ stands for the dot product in $\V^\ell$.
In the sequel we shall also need the \emph{difference Frechet derivative} $ D_F(S)$
of $ F \in \V^\ell $, defined as an $ \ell \times \ell $ matrix
difference operator
\begin{equation}
  \label{e3.06a}(D_F(S))_{i,j\in I} = \sum_{n \in \ZZ} \frac{\ptl F^i}{\ptl u^j_n} S^n \,.
\end{equation} 

Recall that the space of difference evolutionary vector fields is closed under the usual Lie bracket $ (P,Q \in \V^\ell) $:
\begin{equation}\label{e3.07}
[X_P, X_Q] = X_{[P,Q]}, \text{ where } [P,Q] = X_P (Q) - X_Q (P).
\end{equation}

The vector field $ X_Q $ is called a \emph{symmetry} of the equation \eqref{e3.01} if $ [X_P, X_Q] = 0. $ This property is equivalent to the \emph{compatibility} of equation \eqref{e3.01} and the equation $ \frac{du}{dt_1} = Q $ in the sense that $ \frac{d}{dt} \frac{d}{dt_1} = \frac{d}{dt_1} \frac{d}{dt} $.


\section{Hamiltonian difference operators}
Let $ \V $ be a unital associative algebra with an automorphism $ S. $ Then $ \V [S, S^{-1}] $ is a unital $ \ZZ $-graded associative algebra with the product $ \circ  $ defined by the relation 
\begin{equation}\label{e4.01}
S \circ f = S(f) S, \ f \in \V, 
\end{equation}
and the $ \ZZ $-grading, defined by $ \deg \V = 0, \ \deg S = 1.  $ The algebra $ \V [S, S^{-1}] $ is called the algebra of \emph{difference operators} over $ \V. $ This algebra carries an anti-involution $ \ast $ defined by 
\[ f^\ast = f \text{ for } f \in \V, \ S^\ast = S^{-1} \, .  \]

Let now $ \V $ be an algebra of difference functions in $ \ell $ variables
$ u^i,\, i\in I=\{1, \ldots, \ell \} $ (see Definition \ref{def2.1}). Let $ \V $ be endowed by a $ \la $-bracket, defined by the master formula \eqref{e2.02}. The $ \ell \times \ell $ matrix difference operator 
\begin{equation}\label{e4.02}
H(S) = ( \{ u^j_S u^i \}_{\rightarrow})_{i,j \in I}
\end{equation}
is called the \emph{Hamiltonian operator} (or Poisson structure), associated with the $ \la $-bracket \eqref{e2.02}, provided that it satisfies the skewsymmetry and Jacobi identities. 

It is immediate to see that \eqref{e1.12} for
$ g = u^j, \ j \in I, $ can be written as 
\begin{equation}\label{e4.03}
\{ \smallint f, u  \} = H(S) \frac{\delta f}{\delta u},
\end{equation}
where $  u $ is the column vector of the $u^i$'s and $ \frac{\delta f}{\delta u} $ is the column vector of difference variational derivatives \eqref{e3.06}.

Applying integration by parts to \eqref{e2.02}, we obtain
\begin{equation}\label{e4.04}
\{ \smallint f, \smallint g \} = \smallint \frac{\delta g}{\delta u} \cdot H(S) \frac{\delta f}{\delta u} \, .
\end{equation}

Of course, formulas \eqref{e4.03}--\eqref{e4.04} are completely analogous to those in the differential case (cf. \cite{Ku85}).

Note also that while the Hamiltonian operator $ H(S) $ is defined via the $ \la $-bracket \eqref{e2.02} by \eqref{e4.02}, conversely, the $ \la $-bracket \eqref{e2.02} can be expressed via $ H(S)  $ by
\begin{equation}\label{e4.05}
(\{ u^j_\la u^i\})_{i,j\in I} = H (\la S) I_{\ell}.
\end{equation}
It follows from \eqref{e4.03} that the Hamiltonian equation \eqref{e1.13} with a $ \la$-bracket, corresponding via \eqref{e4.05} to the Hamiltonian operator $ H(S) $, and a Hamiltonian functional $ \int h \in \bar{\V} $, is an evolution difference equation
\begin{equation}\label{e4.06}
\frac{du}{dt} = H(S) \frac{\delta \smallint h}{\delta u} \, .
\end{equation}

Recall that, by Lemma \ref{lem1.1}, we have a homomorphism of the Lie algebra $ \bar{\V} $ with bracket \eqref{e1.11} to derivatives of $ \V, $ commuting with $ S.  $ By \eqref{e4.03} it is given by the formula
\begin{equation}\label{e4.07}
\smallint f \mapsto X_{H(S) \frac{\delta \smallint f}{\delta u}} \, . 
\end{equation}
Consequently, we obtain the standard
\begin{proposition}\label{prop4.1}
If $ \int f $ is an integral of motion of the Hamiltonian equation \eqref{e4.06}, then the evolutionary vector field $X_{H(S) \frac{\delta}{\delta u} \int f} $ is a symmetry of this equation. 
\end{proposition}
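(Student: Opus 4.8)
The plan is to deduce the statement as a direct consequence of the homomorphism property established in Lemma \ref{lem1.1}, together with the formulas \eqref{e4.03}, \eqref{e4.04}, and \eqref{e3.07}. Write $P = H(S)\frac{\delta \int h}{\delta u}$ and $Q = H(S)\frac{\delta \int f}{\delta u}$, so that the Hamiltonian equation \eqref{e4.06} reads $\frac{du}{dt} = P$, and the assertion to be proved is the symmetry condition $[X_P, X_Q] = 0$, equivalently $[P,Q]=0$ in the notation of \eqref{e3.07}.

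First I would check that the derivation $\{\int f, \cdot\,\}$ supplied by the representation \eqref{e1.12} in Lemma \ref{lem1.1} coincides with the evolutionary vector field $X_Q$. Both are derivations of $\V$ commuting with $S$, so it suffices to compare them on the generators $u^i$. By \eqref{e4.03} one has $\{\int f, u^i\} = \big(H(S)\frac{\delta \int f}{\delta u}\big)^i = Q^i$, while the definition \eqref{e3.03} gives $X_Q(u^i) = Q^i$. Hence the two derivations agree on generators, and therefore everywhere, so the representation $\int f \mapsto \{\int f, \cdot\,\}$ is precisely the map \eqref{e4.07}, namely $\int f \mapsto X_Q$.

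Next I would recast the integral-of-motion hypothesis as the vanishing of a Lie bracket in $\bar{\V}$. By definition $\int f$ is an integral of motion of $\frac{du}{dt}=P$ iff $\int X_P(f)=0$; integrating by parts as in \eqref{e3.04}--\eqref{e3.05} yields $\int X_P(f) = \int \frac{\delta f}{\delta u}\cdot P = \int \frac{\delta f}{\delta u}\cdot H(S)\frac{\delta h}{\delta u}$, which by \eqref{e4.04} is exactly $\{\int h, \int f\}$. Thus the hypothesis is precisely $\{\int h, \int f\} = 0$ in the Lie algebra $\bar{\V}$ with bracket \eqref{e1.11}.

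Finally, since by Lemma \ref{lem1.1} the map $\int g \mapsto X_{H(S)\frac{\delta \int g}{\delta u}}$ is a homomorphism of Lie algebras (from $\bar{\V}$ with bracket \eqref{e1.11} to evolutionary derivations of $\V$ with bracket \eqref{e3.07}), it carries the zero element $\{\int h, \int f\} = 0$ to the zero derivation. Therefore $[X_P, X_Q] = 0$, which is the asserted symmetry. The only genuinely non-formal step is the identification in the second paragraph of the representation derivation with the evolutionary form $X_Q$, and the restatement in the third paragraph of the conservation law as a bracket in $\bar{\V}$; once these are in place the conclusion is immediate from the homomorphism property, so I do not anticipate any substantive obstacle.
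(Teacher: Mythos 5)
Your proposal is correct and follows exactly the route the paper intends: the paper states Proposition \ref{prop4.1} as an immediate consequence ("Consequently, we obtain the standard...") of the Lie algebra homomorphism \eqref{e4.07} furnished by Lemma \ref{lem1.1} and formula \eqref{e4.03}, which is precisely the argument you spell out (identification of the representation derivation with $X_Q$ via \eqref{e4.03}, restatement of the conservation law as $\{\smallint h,\smallint f\}=0$ via \eqref{e3.05} and \eqref{e4.04}, and application of the homomorphism property). Your write-up simply makes explicit the details the paper leaves to the reader.
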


The following proposition translates the properties of the Poisson $ \la $-brackets to that of the corresponding Hamiltonian operators. It is a ``difference'' analogue of Proposition 1.16 from \cite{BDSK09}. 
\begin{proposition}\label{prop4.2}
\begin{enumerate}
\item[(a)] The multiplicative $ \la $-bracket \eqref{e2.02} is skewsymmetric if and only if the associated via \eqref{e4.02} difference operator
  $ H(S)=(H_{ij}(S))_{i,j\in I} $ is skewadjoint:
\[ H(S)^\ast = -H(S), \,\hbox{where}\, (H_{ij}(S) )^\ast = (H_{ji} (S)^\ast) \, . \]
\item[(b)] If the operator $ H(S)  $ is skewadjoint, then the corresponding $ \la $-bracket, defined by \eqref{e2.02} and \eqref{e4.05}, satisfies Jacobi identity if and only if one of the following equivalent conditions holds:
\begin{enumerate}
\item[(i)] the multiplicative $ \la $-bracket on $ V $, associated to $ H(S) $ via \eqref{e4.05}, satisfies the Jacobi identity,
\item[(ii)] the following identity holds for any $ i,j,k \in I: $
\[ \begin{aligned}
&\sum_{t \in I, n \in \ZZ} \left( \frac{\ptl H_{k,j} (\mu)}{\ptl u^{(t)}_n} (\la S)^n H_{t,i} (\la) - \frac{\ptl H_{k,i} (\la)}{\ptl u^{(t)}_n} (\mu S)^n H_{t,j} (\mu) \right) \\
& = \sum_{t \in I, n \in \ZZ} H_{k,t} (\la \mu S)(\la \mu S)^{-n} \frac{\ptl H_{j,i} (\la)}{\ptl u^{(t)}_n},
\end{aligned}
 \]
\item[(iii)] the following identity holds for any $ F, G \in \V^{\ell}: $
\[ \begin{aligned}
&H(S) D_G(S) H(S) F + H(S) D^*_{H(S)F} (S)G - H(S) D_F (S) H(S) G + H(S) D^*_F (S) H (S)G \\
  & = D_{H(S)G} (S) H(S) F - D_{H(S)F} (S) H(S) G,
 \end{aligned}
\]
 where $D_F(S)$ is the difference Frechet derivative of $F\in \V^{\ell}$,  defined by \eqref{e3.06a}.
\end{enumerate}
\end{enumerate}
\end{proposition}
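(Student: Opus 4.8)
The plan is to follow the proof of Proposition 1.16 in \cite{BDSK09}, translating from the differential to the difference setting by the substitutions $\partial \to S$ and $\la+\mu \to \la\mu$, and by handling the anti-involution through $f^*=f$, $S^*=S^{-1}$. Throughout I use the identification $\{u^j_\la u^i\}=H_{ij}(\la S)$ from \eqref{e4.05} together with the master formula \eqref{e2.02}.

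For part (a), by Proposition \ref{prop2.1}(b) it suffices to test skewsymmetry on pairs of generators. Writing $H_{ij}(S)=\sum_n g^n_{ij}S^n$, I would substitute \eqref{e4.05} directly into the skewsymmetry axiom M2 applied to $u^i,u^j$. The left arrow in M2 (move all shift operators to the left) together with the substitution of $(S\la)^{-1}$ for the bracket variable turns $\{u^i_\la u^j\}=-\vphantom{j}_\leftarrow\{u^j_{(S\la)^{-1}}u^i\}$ into the entrywise relation $H_{ji}(S)=-H_{ij}(S)^*$, which is exactly the skewadjointness $H(S)^*=-H(S)$ with the convention $(H_{ij}(S))^*=H_{ji}(S)^*$. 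Once the definition $f^*=f$, $S^*=S^{-1}$ of the anti-involution is unwound and coefficients of powers of $\la$ are compared, this is a short symbolic computation.

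For part (b), the equivalence between the full Jacobi identity M3 on $\V$ and condition (i) (the Jacobi identity on triples of generators $u^i,u^j,u^k$) is precisely Proposition \ref{prop2.1}(c), granted the skewsymmetry established in (a). It thus remains to prove (i)$\Leftrightarrow$(ii)$\Leftrightarrow$(iii). For (i)$\Leftrightarrow$(ii), I would write M3 for $a=u^i$, $b=u^j$, $c=u^k$ and expand each of the three terms with the master formula \eqref{e2.02}, inserting $\{u^a_\nu u^b\}=H_{ba}(\nu S)$. The inner bracket of the first two terms is $H_{kj}(\mu S)$ (resp.\ $H_{ki}(\la S)$), which gets differentiated by $\ptl/\ptl u^t_n$ and shifted by $(\la S)^n$ (resp.\ $(\mu S)^n$), while the right-hand term produces $H_{kt}(\la\mu S)$ acting on $\ptl H_{ji}(\la)/\ptl u^t_n$; collecting the coefficients reproduces identity (ii), the three arguments $\la,\mu,\la\mu$ of $H$ arising from the outer, middle, and composed brackets respectively. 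The only delicate point is the bookkeeping of the commutation $S\circ f=S(f)S$ and the matching of the shift exponents, and this is the main obstacle: it is essentially the same computation as in \cite{BDSK09}, but with $\la+\mu$ systematically replaced by $\la\mu$, so that the shifts multiply rather than add.

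For (ii)$\Leftrightarrow$(iii), I would recognize the sums $\sum_{t,n}\frac{\ptl H_{\bullet}}{\ptl u^t_n}(\cdots S)^n$ and $\sum_{t,n}(\cdots S)^{-n}\frac{\ptl H_{\bullet}}{\ptl u^t_n}$ appearing in (ii) as the components of the difference Frechet derivative \eqref{e3.06a} and its adjoint, applied to the vectors $H(S)F$, $F$, $G$, $H(S)G$. Pairing identity (ii) against arbitrary $F,G\in\V^\ell$, equivalently letting the formal symbols $\la,\mu$ act as the shift operators contracted with $G$ and $F$, converts the indexed scalar identity into the coordinate-free operator identity (iii); conversely, taking $F,G$ to range over the standard basis vectors recovers (ii). The skewadjointness of $H(S)$ from part (a) is what allows the adjoint Frechet terms $D^*_{H(S)F}(S)$ and $D^*_F(S)$ to appear in the stated form. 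This last step is purely a change of notation and carries no real difficulty.
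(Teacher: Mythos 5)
Your treatment of part (a) and of the equivalence (i)$\Leftrightarrow$(ii) follows the paper: (a) is a direct unwinding of M2 on pairs of generators, and (i)$\Leftrightarrow$(ii) is immediate from the identification \eqref{e4.02}/\eqref{e4.05} together with Proposition \ref{prop2.1}. The gap is in (ii)$\Leftrightarrow$(iii), which you dismiss as ``purely a change of notation''. It is not, and your identification is incorrect as stated: the sums in (ii) involve only the partial derivatives of the \emph{coefficients} of $H$, namely $\partial H_{kj}(\mu)/\partial u^t_n$, whereas the operators $D_{H(S)G}(S)$ and $D_{H(S)F}(S)$ occurring in (iii) are Frechet derivatives of the full expressions $H(S)G$, $H(S)F$, and therefore contain, by the Leibniz rule, also terms where the derivatives hit $G$ and $F$. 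Identity (iii) carries the extra summands $H(S)D_G(S)H(S)F$, $-H(S)D_F(S)H(S)G$, $H(S)D^*_{H(S)F}(S)G$, $H(S)D^*_F(S)H(S)G$ precisely so that these derivative-of-$F$ and derivative-of-$G$ contributions cancel, and exhibiting this cancellation is the actual content of the proof. The paper does it through the key Leibniz-type identity
\[ \bigl(D_{H(S)F}(S)\bigr)_{ij} - \bigl(H(S)D_F(S)\bigr)_{ij} = \sum_{k\in I,\, n\in\ZZ}\frac{\partial H(S)_{ik}}{\partial u^j_n}\,F^k S^n \, , \]
applied to $H(S)G$ (formula \eqref{e4.08}), combined with its adjoint (this is exactly where the skewadjointness of $H$ from part (a) enters) and with its version with $F$ and $G$ interchanged; the combination $\eqref{e4.08}-\eqref{e4.08}^*+\eqref{e4.08}_F$ converts (iii) into identity \eqref{e4.09}, in which only derivatives of the coefficients of $H$ survive. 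Your proposal never produces this decomposition, so the required cancellation is nowhere justified.

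A second, smaller, flaw: your converse direction ``taking $F,G$ to range over the standard basis vectors recovers (ii)'' cannot work, because constant vectors are fixed by $S$, so all the shift information (the separate powers of $\la$ and $\mu$ in (ii)) collapses. The correct argument, used in the paper, is that \eqref{e4.09} holds for \emph{arbitrary} $F,G\in\V^\ell$; hence one may replace $S$ acting on $F^t$ by the indeterminate $\la$ and $S$ acting on $G^k$ by $\mu$ (a difference operator that vanishes on all of $\V^\ell$ has zero coefficients), which converts \eqref{e4.09} into the polynomial identity (ii), and conversely.
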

\begin{proof}
(a) is straightforward and equivalence of (i) and (ii) in (b) is clear by \eqref{e4.02}. In order to prove equivalence of (ii) and (iii), note the following identity for any $ F \in \V^\ell: $
\[ (D_{H(S)F} (S)  )_{ij} - (H(S)D_F (S))_{ij} = \sum_{k \in I, n \in \ZZ}  \frac{\ptl H(S)_{ik}}{\ptl u^j_n} F^k S^n \]
Applying both sides to $ H(S)G, $ we obtain for $F,G\in \V^\ell$, $i\in I$:
\begin{equation}\label{e4.08}
  (D_{H(S)F} (S)H(S)G )^i - (H(S)D_F(S)H(S)G)^i = \sum_{j,k \in I, n \in \ZZ}
  (\frac{\ptl H(S)_{ik}}{\ptl u^j_n} F^k) S^n ((H(S)G)^j) \, .
\end{equation}
Denote by \eqref{e4.08}* the identity, obtained from \eqref{e4.08} by applying * to it,and by $ \eqref{e4.08}_F $, obtained from \eqref{e4.08} by substituting $ G $ by $ F. $ Then the identity $ \eqref{e4.08} - \eqref{e4.08}^* + \eqref{e4.08}_F $ shows that identity (iii) is equivalent to the following identity for any $ F, G \in \V^{\ell} $:
\begin{equation}\label{e4.09}
\begin{aligned}
&\sum_{\substack{j,k, t \in I \\ n \in \ZZ} } \frac{\ptl H_{ik} (S)}{\ptl u^j_n} G^k S^n (H_{jt} (S) F^t) - \sum_{\substack{j,k, t \in I \\ n \in \ZZ} } \frac{\ptl H_{it} (S)}{\ptl u^j_n} F^t S^n (H_{jk} (S) G^k)\\
& = \sum_{\substack{j,k, t \in I \\ n \in \ZZ} } H_{ij} (S)S^{-n} (( \frac{\ptl H_{kt} (S)}{\ptl u^{j}_n} F^t) G^k ).
\end{aligned}
\end{equation}
Since identity \eqref{e4.09} holds for every $ F,G \in \V^\ell, $ we can replace in it $ S, $ acting on $ F^t, $ by $ \la $, and $ S, $ acting on $ G^k, $ by $ \mu $, and write it as an identity for polynomials in $ \la $ and $ \mu. $ This shows that identity \eqref{e4.09} is equivalent to (ii).
\end{proof}

\begin{corollary}\label{cor4.3}
Let $ H(S) $ be a Hamiltonian operator, acting on $ \V^\ell. $ Then $ H(S) \V^\ell $ is a subalgebra of $ \V^\ell $ with respect to the bracket \eqref{e3.07}.
\end{corollary}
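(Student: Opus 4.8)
The plan is to reduce everything to condition (iii) of Proposition \ref{prop4.2}(b), which already packages the Jacobi identity for $H(S)$ into a single operator identity. Since $H(S)\V^\ell$ is manifestly a linear subspace of $\V^\ell$ (being the image of the linear operator $H(S)$) and the bracket \eqref{e3.07} is bilinear, it suffices to show that for all $F, G \in \V^\ell$ the element $[H(S)F, H(S)G]$ again lies in $H(S)\V^\ell$.

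First I would rewrite the bracket \eqref{e3.07} in terms of Frechet derivatives. Comparing the definition \eqref{e3.03} of the evolutionary vector field $X_P$ with the definition \eqref{e3.06a} of the Frechet derivative, one sees immediately that $X_P(Q) = D_Q(S)P$ for all $P, Q \in \V^\ell$. Hence
\[
[P,Q] = X_P(Q) - X_Q(P) = D_Q(S)P - D_P(S)Q.
\]
Substituting $P = H(S)F$ and $Q = H(S)G$ gives
\[
[H(S)F, H(S)G] = D_{H(S)G}(S)H(S)F - D_{H(S)F}(S)H(S)G,
\]
which is exactly the right-hand side of the identity in Proposition \ref{prop4.2}(b)(iii).

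The key step is then to invoke that proposition. Since $H(S)$ is by hypothesis a Hamiltonian operator, i.e. skewadjoint and satisfying the Jacobi identity, identity (iii) holds, so the right-hand side above equals its left-hand side. Every term on that left-hand side carries $H(S)$ as a left factor, so we obtain
\[
[H(S)F, H(S)G] = H(S)\left( D_G(S)H(S)F + D^*_{H(S)F}(S)G - D_F(S)H(S)G + D^*_F(S)H(S)G \right),
\]
where the argument of $H(S)$ lies in $\V^\ell$. This exhibits $[H(S)F, H(S)G]$ as an element of $H(S)\V^\ell$, completing the proof.

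I expect essentially no obstacle here, since the substance is already encoded in Proposition \ref{prop4.2}(iii); the only genuine content is the elementary reformulation $X_P(Q) = D_Q(S)P$ of the bracket and the sign bookkeeping needed to match the resulting expression with the right-hand side of (iii). Care is required only to keep the order of the matrix difference operators and their adjoints straight.
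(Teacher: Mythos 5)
Your proof is correct and follows essentially the same route as the paper: both identify the right-hand side of Proposition \ref{prop4.2}(b)(iii) with the bracket $[H(S)F,H(S)G]$ via the observation $X_P(Q)=D_Q(S)P$, and then note that every term on the left-hand side of (iii) has $H(S)$ as a left factor, hence lies in $H(S)\V^\ell$. The only difference is that you spell out the factorization explicitly, which the paper leaves implicit.
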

\begin{proof}
  Since $ D_F(S)G = X_GF, $ the RHS of (iii) in Proposition \ref{prop4.2} is the
  bracket \eqref{e3.07} of $ H(S)F $ and $ H(S)G, $ while the LHS lies in the image of $ H(S).  $
\end{proof}

According to Remarks \ref{rem1.1} and \ref{rem1.2}, there is an alternative language of Poisson brackets on an algebra of difference functions $ \V $ in $ u^i,\, i\in I $, with an automorphism $ S. $
\begin{proposition}\label{prop4.4}
\begin{enumerate}
\item[(a)] Given a Poisson $ \la $-bracket on $ \V $, defined by the $ \la $-brackets 
\[ \{ u^i_\la u^j \} = \sum_{k\in \ZZ} \la^k f^{ij}_k, \quad i,j \in I, \]
let 
\begin{equation}\label{e4.10}
[u^i_m, u^j_n] = S^n f^{ij}_{m-n}, \quad i,j \in I, \ m,n \in \ZZ,
\end{equation}
and extend this to the whole of $ \V $ by the ordinary Leibniz rules:
\begin{equation}\label{e4.11}
[f,g] = \sum_{\substack{i,j \in I \\ m,n \in \ZZ}} \frac{\ptl f}{\ptl u^i_m} [u^i_m, u^j_n] \frac{\ptl g}{\ptl u^j_n} \, .
\end{equation}
(This is the coefficient of $ \la^0 $ in \eqref{e2.02}.)
Then $ \V $ becomes an (ordinary) Poisson algebra with an automorphism $ S, $ and such that 
\begin{equation}\label{e4.12}
[u^i_m, u^j_n] = 0 \text{ for } |m-n| \gg 0.
\end{equation}
\item[(b)] Conversely, if $ \V $ has a structure of a Poisson algebra with $ S $-invariant bracket $ [. \, , \, .], $ and satisfying the locality property \eqref{e4.12}, then, letting
\[ \{ u^i_\la u^j \} = \sum_{k\in \ZZ} \la^k [u^i_k, u^j_0] \]
and extending by the master formula \eqref{e2.02}, endows $ \V $ with the structure of a multiplicative PVA. The corresponding Hamiltonian operator is
\[ H(S) = \left( \sum_{k\in \ZZ} [u^j_k, u^i_0] S^k  \right)_{i,j\in I} \, . \]
\end{enumerate}
\end{proposition}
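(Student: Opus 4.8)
The plan is to read the two parts of the statement as the explicit, ``difference-function'' incarnation of Remarks~\ref{rem1.1} and~\ref{rem1.2}: the ordinary bracket $[f,g]$ is nothing but the coefficient of $\la^0$ in the multiplicative $\la$-bracket $\{f_\la g\}$, and Remark~\ref{rem1.2} already guarantees that this coefficient is a Poisson bracket for which $S$ is an automorphism and the locality \eqref{e1.7} holds. For part (a) it therefore remains only to identify this abstract bracket with the explicit formulas \eqref{e4.10}--\eqref{e4.11} and to rewrite \eqref{e1.7} as \eqref{e4.12}.

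To carry this out I would first compute the $\la^0$-coefficient of $\{(u^i_m)_\la u^j_n\}$ directly. Using the two halves of sesquilinearity M1 in the form $\{S^m a_\la b\}=\la^{-m}\{a_\la b\}$ and $\{a_\la S^k b\}=(\la S)^k\{a_\la b\}$ (the latter deduced from $S\{a_\la b\}=\{Sa_\la Sb\}$), one obtains
\[
\{(u^i_m)_\la u^j_n\}=\sum_k \la^{\,k+n-m}\,S^n f^{ij}_k,
\]
whose $\la^0$-term is $S^n f^{ij}_{m-n}$, exactly \eqref{e4.10}. Since the coefficient of $\la^0$ in $\{f_\la g\}$ is, by Remark~\ref{rem1.2}, a Poisson bracket, it is in particular a biderivation and is hence determined by its values on the generators $u^i_m$; it therefore coincides with the Leibniz extension \eqref{e4.11}. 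Finally, because $\{u^i_\la u^j\}=\sum_k\la^k f^{ij}_k$ is a Laurent polynomial, $f^{ij}_k=0$ for $|k|\gg 0$; together with $[u^i_m,u^j_n]=S^n[S^{m-n}u^i,u^j]=S^n f^{ij}_{m-n}$ (using $S$-invariance of the bracket) this yields the locality \eqref{e4.12}. Thus part (a) follows from Remark~\ref{rem1.2}.

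For the converse (b) I would start from the given $S$-invariant Poisson bracket satisfying \eqref{e4.12} and set $\{u^i_\la u^j\}=\sum_k\la^k[u^i_k,u^j_0]$; by locality this is a finite Laurent polynomial, hence a legitimate element of $\V[\la,\la^{-1}]$. Extending by the master formula \eqref{e2.02}, Proposition~\ref{prop2.1}(a) already secures sesquilinearity M1 and the Leibniz rules L1, L2, while Proposition~\ref{prop2.1}(b),(c) reduces the verification of skewsymmetry M2 and the Jacobi identity M3 to the generators. On generators these are precisely the coefficient-wise identities \eqref{e1.5} and \eqref{e1.6}: writing $[u^i_k,u^j_0]=[S^k u^i,u^j]$, the identity $[u^j_k,u^i_0]=-S^k[u^i_{-k},u^j_0]$ underlying M2 is the antisymmetry of $[\,\cdot\,,\,\cdot\,]$ combined with its $S$-invariance, and M3 becomes the ordinary Jacobi identity $[x,[y,z]]-[y,[x,z]]=[[x,y],z]$ for $x=S^iu^a$, $y=S^ju^b$, $z=u^c$. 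Hence $\{\,._\la\,.\,\}$ is a multiplicative PVA $\la$-bracket, and reading off \eqref{e4.02} gives $H(S)_{ij}=\{u^j_S u^i\}_{\rightarrow}=\sum_k[u^j_k,u^i_0]S^k$, the asserted Hamiltonian operator.

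The only genuinely delicate point is the bookkeeping of the $S$-shifts in the skewsymmetry translation — matching the $(S\la)^{-1}$ and the leftward arrow in M2 against antisymmetry plus $S$-invariance, and likewise tracking the shifts in M3. Once these coefficient identities are lined up with \eqref{e1.4}--\eqref{e1.6}, both directions are immediate consequences of Remarks~\ref{rem1.1}--\ref{rem1.2} and Proposition~\ref{prop2.1}, and no substantial computation beyond this dictionary is required.
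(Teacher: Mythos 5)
Your proposal is correct. The paper's own ``proof'' of Proposition \ref{prop4.4} consists of the single phrase ``Straightforward verification,'' so there is no argument to diverge from; what you have written supplies exactly that verification, organized in the natural way: for (a), the $\la^0$-coefficient computation identifying \eqref{e4.10}--\eqref{e4.11} with the Lie bracket of Remarks \ref{rem1.1}--\ref{rem1.2}, and for (b), the reduction to generators via Proposition \ref{prop2.1}(b),(c) together with the dictionary ``antisymmetry $+$ $S$-invariance $\Leftrightarrow$ M2'' and ``ordinary Jacobi identity for $S^iu^a, S^ju^b, u^c$ $\Leftrightarrow$ M3,'' which is precisely the correspondence \eqref{e1.8} of Remark \ref{rem1.1}. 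Both coefficient computations you sketch check out: $\{(u^i_m)_\la u^j_n\}=\sum_k\la^{k+n-m}S^nf^{ij}_k$ follows from M1, and the coefficient of $\la^i\mu^j$ in each term of M3 on a triple of generators reduces, after reindexing with $S$-invariance, to $[S^iu^a,[S^ju^b,u^c]]-[S^ju^b,[S^iu^a,u^c]]=[[S^iu^a,S^ju^b],u^c]$.

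One hypothesis you use silently --- as does the statement of the proposition itself: in part (b) the given Poisson bracket must be recoverable from its values on the generators $u^i_n$ by the Leibniz expansion \eqref{e4.11}, i.e.\ the derivations $[u^i_k,\,\cdot\,]$ of $\V$ must be expressible through the $\ptl/\ptl u^j_n$. This holds automatically on $\Pc_\ell$ and for the extensions covered by Remark \ref{rem2.1}, and it is exactly what lets you rewrite the master-formula coefficient of $\la^i$ in $\{u^a_\la f\}$ as $[u^a_i,f]$ for a general $f\in\V$ when verifying M3 on triples of generators (the inner brackets there are arbitrary elements of $\V$, not generators). Flagging this explicitly would make the write-up airtight; it is not a gap relative to the paper, which takes the same convention for granted.
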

\begin{proof}
Straightforward verification.
\end{proof}
\begin{remark}\label{rem4.5}
In the case of one difference variable $ u $ the multiplicative $ \la $-bracket is defined by $ \{ u_\la u  \} = \sum_{k \in \ZZ} f_k \la^k, $ and, by \eqref{e4.10} the corresponding Poisson bracket becomes
\[ [u_m, u_n] = S^n f_{m-n}, \ m,n \in \ZZ.   \]
For example, in case of the general type multiplicative $ \la $-bracket
of order $N$, given by $f_k=c_kg(u)g(u_k), c_{-k}=-c_k, k=1,...,N$, the corresponding Poisson bracket is
\begin{equation}\label{e4.13}
[u_m, u_n] = c_{m-n} g(u_m) g(u_n). 
\end{equation}
In the case of the complementary multiplicative $ \la $-bracket of degree 2 we have
\[ f_1 = g(u)g(u_1) (F(u) + F(u_1)), \quad f_2 = g(u) g(u_2) F(u_1),  \]
and the corresponding Poisson bracket is
\begin{equation}\label{e4.14}
\begin{aligned}
\left[u_m, u_n\right] & = \pm g(u_m) g(u_n) F(u_{n \pm 1}) \text{ if } m-n = \pm 2, \\
& = \pm g(u_m) g(u_n) (F(u_m) + F(u_n)) \text{ if } m-n = \pm 1,\\
& = 0 \text{ otherwise.}
\end{aligned}
\end{equation}
Bracket \eqref{e4.13} for $N=1$ is compatible with bracket \eqref{e4.14}.
A linear combination of these brackets for $ g(u) = F(u) = u $ is the well-known Faddeev-Takhtajan-Volkov bracket \cite{FT86}.
\end{remark}

\section{The variational complex}
Let $ \V $ be an algebra of difference functions in the variables $ u^i, \ i \in I = \{ 1, \ldots, \ell \}. $ The \emph{basic de Rham complex} $ \tilde{\Omega} = \tilde{\Omega} (\V) $ is defined as a free commutative superalgebra over $ \V $ with odd generators $ \delta u^i_n, \ i \in I, n \in \ZZ. $ It has the same properties as the basic de Rham complex, studied in \cite{BDSK09}, Section 3. We recall here the most necessary of them.

The superalgebra $ \tilde{\Omega} $ consists of finite sums of the form
\begin{equation}\label{e5.01}
\tilde{\omega} = \sum_{\substack{i_1, \ldots, i_k \in I \\ m_1, \ldots, m_k \in \ZZ} } f^{m_1, \ldots, m_k}_{i_1, \ldots, i_k} \delta u^{i_1}_{m_1} \wedge \ldots \wedge \delta u^{i_k}_{m_k}, \quad f^{m_1, \ldots, m_k}_{i_1, \ldots, i_k} \in \V,  
\end{equation}
and has the usual (super)commutative product $ \wedge. $
This is a $ \ZZ_+ $-graded superalgebra: $  \tilde{\Omega} (\V) = \bigoplus_{k \in \ZZ_+} \tilde{\Omega}^k, $  where the grading is defined by letting $ \deg \V = 0, \ \deg \delta u^i_n = 1.$ It carries an odd derivation $ \delta $ of degree 1, defined by 
\[ \delta (\delta u^i_n) = 0, \quad \delta f = \sum_{\substack{i \in I \\ n \in \ZZ}} \frac{ \ptl f}{\ptl u^i_n} \delta u^i_n \text{ for } f \in \V. \]
It is immediate to check that $ \delta^2 = 0, $ hence we have cohomology of this complex
\[ H(\tilde{\Omega} (\V), \delta ) = \bigoplus_{k \geq 0} H^k (\tilde{\Omega} (\V), \delta) \, .\]

In the same way as in \cite{BDSK09} we show that the complex $ (\tilde{\Omega} (\V), \delta) $ is acyclic, provided that the algebra of difference functions $ \V $ is normal, as defined below. 
The algebra $ \V $ carries a filtration by subspaces 
\[ \V_{n,i} = \left\{ f \in \V \, \middle| \, \frac{\ptl f}{\ptl u^j_m} = 0
\,\text{ for } (m, j) > (n,i ) \, \text{in the lexicographical order} \right\}
. \]
The algebra $ \V $ is called \emph{normal}  if $ \frac{\ptl}{\ptl u^i_n} \V_{n,i} = \V_{n,i} $  for all $ i \in I, n \in \ZZ $.
\begin{theorem}\label{th5.1}
$ H^k (\tilde{\Omega} (\V), \delta ) = \delta_{k,0} \cF, $ provided that $ \V $ is normal. 
\end{theorem}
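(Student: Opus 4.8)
The plan is to prove acyclicity by producing an explicit contracting homotopy, following the strategy of the differential case in \cite{BDSK09}, Section 3; the degree-zero cohomology is computed separately and does not use normality. First I would dispose of $H^0$. An element $f\in\tilde\Omega^0=\V$ is closed precisely when $\delta f=\sum_{i,n}\frac{\ptl f}{\ptl u^i_n}\,\delta u^i_n=0$, i.e. when every partial derivative $\frac{\ptl f}{\ptl u^i_n}$ vanishes, which by definition means $f\in\cF$. Since there are no forms of negative degree, no $0$-form is exact, so $H^0(\tilde\Omega(\V),\delta)=\cF$. This accounts for the factor $\delta_{k,0}\cF$.

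The substance of the theorem is the vanishing $H^k=0$ for $k\geq 1$, and this is where normality is used. The model is the one-variable Poincar\'e lemma: for a single variable $x=u^i_n$ equipped with a chosen right inverse $\int_x$ to $\frac{\ptl}{\ptl u^i_n}$, writing a form as $\omega=\omega_0+\delta u^i_n\wedge\omega_1$ with $\omega_0,\omega_1$ free of $\delta u^i_n$, the operator $h_{(i,n)}(\omega)=\int_x\omega_1$ satisfies $\delta h_{(i,n)}+h_{(i,n)}\delta=\mathrm{id}-\pi_{(i,n)}$, where $\pi_{(i,n)}$ projects onto the part independent of both $u^i_n$ and $\delta u^i_n$. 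Normality is exactly the input guaranteeing that such an antiderivative exists: the condition $\frac{\ptl}{\ptl u^i_n}\V_{n,i}=\V_{n,i}$ says that $\frac{\ptl}{\ptl u^i_n}$ admits a right inverse preserving the filtered piece $\V_{n,i}$. I would therefore fix the lexicographic order on the index set $\ZZ\times I$ and choose, for each pair $(i,n)$, such an antiderivative $\int_{(i,n)}$ with $\int_{(i,n)}\V_{n,i}\subseteq\V_{n,i}$.

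The individual homotopies are then assembled using the filtration by the $\V_{n,i}$ together with the finiteness property of an algebra of difference functions: every form in $\tilde\Omega(\V)$ involves only finitely many generators $\delta u^i_n$, and its coefficients depend on only finitely many of the variables $u^i_n$. Hence any closed form $\omega$ of degree $k\geq 1$ lies in the subcomplex built from a finite set $J\subset\ZZ\times I$ of variables. Applying the one-variable homotopies $h_{(i,n)}$ for $(i,n)\in J$ in decreasing order peels off one variable at a time, and after finitely many steps reduces $\omega$, modulo an exact form, to a form with coefficients in $\cF$ carrying no generators $\delta u^i_n$; since $k\geq 1$ such a residue must vanish. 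This is the iterated (K\"unneth) version of the one-variable lemma and yields $\delta h+h\delta=\mathrm{id}$ on $\bigoplus_{k\geq 1}\tilde\Omega^k$, whence $H^k=0$ for $k\geq 1$.

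The main obstacle is organizational rather than conceptual: one must verify that the chosen antiderivatives $\int_{(i,n)}$, and hence the one-variable homotopies, are mutually compatible and respect the filtration, so that peeling variables in decreasing order is well defined on the whole of $\tilde\Omega(\V)$ and the homotopy identity holds exactly. This is precisely the place where normality enters essentially, and it is the step carried out in detail in \cite{BDSK09}. Everything else is formal: the passage from the differential to the difference setting changes only the range of the index $n$ (now all of $\ZZ$ rather than $\ZZ_{\geq 0}$), and the automorphism $S$ plays no role in the basic de Rham complex or in the homotopy, so the argument transfers verbatim.
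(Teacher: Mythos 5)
Your proposal is correct and takes essentially the same route as the paper, which itself gives no independent argument but simply invokes the homotopy/peeling proof of \cite{BDSK09}, Section 3: compute $H^0=\cF$ directly, then use normality to integrate away the largest variable appearing, modulo exact forms, and iterate, the finiteness axioms of an algebra of difference functions guaranteeing termination. The one subtlety you flag (that the exact identity $\delta h_{(i,n)}+h_{(i,n)}\delta=\mathrm{id}-\pi_{(i,n)}$ needs antiderivatives compatible with the remaining partial derivatives, which is not automatic from normality alone and is avoided in \cite{BDSK09} by subtracting $\delta\gamma$ with $\frac{\ptl \gamma}{\ptl u^i_n}=\alpha$ and then using closedness to kill the residual dependence on $u^i_n$) is precisely the bookkeeping the paper also defers to \cite{BDSK09}.
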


Next, we extend the automorphism $ S $ of the algebra $ \V $ to an automorphism of the superalgebra $ \tilde{\Omega} (\V), $ letting 
\[ S(\delta u^i_n) = \delta u^i_{n+1}, \ i \in I, n \in \ZZ, \]
and denote it again by $ S. $ It is immediate to check, using \eqref{e2.01}, that $ S  $ commutes with $ \delta,  $ hence $ (S-1) \tilde{\Omega} (\V) $ is a $ \delta $-invariant subspace, and we can define the \emph{reduced} complex
\begin{equation}\label{e5.02}
\Omega (\V) = \tilde{\Omega} (\V) / (S-1) \tilde{\Omega}(\V) = \bigoplus_{k \geq 0} \Omega^k (\V), 
\end{equation}
with the induced action of $ \delta. $ It is called the
variational complex.

In the same way as in \cite{BDSK09}, Section 3, using the long cohomology exact sequence, we prove 
\begin{theorem}\label{th5.2}
	\[ H^k (\Omega (\V), \delta) = \delta_{k,0} \cF / (S-1) \cF,  \]
provided that $ \V $ is normal. 
\end{theorem}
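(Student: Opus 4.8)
The plan is to deduce Theorem \ref{th5.2} from the acyclicity of the basic complex (Theorem \ref{th5.1}), exactly in the spirit of \cite{BDSK09}, Section~3, by running the passage to the quotient $\Omega(\V)=\tilde\Omega(\V)/(S-1)\tilde\Omega(\V)$ through the homological algebra of the chain map $S-1$. Since $S$ commutes with $\delta$, the operator $S-1$ is an endomorphism of the complex $(\tilde\Omega(\V),\delta)$, and I would study the double complex with two columns equal to $\tilde\Omega(\V)$, horizontal differential $S-1$ and vertical differential $\delta$ (equivalently, the mapping cone of $S-1$). Its total cohomology, which I denote $H^n(\mathrm{Tot})$, will be computed by the two associated spectral sequences, and comparing the answers yields the theorem. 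I emphasize that one cannot simply descend a contracting homotopy of $\tilde\Omega(\V)$ to $\Omega(\V)$: the homotopy furnished by normality integrates the variables $u^i_n$ in a fixed (lexicographic) order and so cannot in general be chosen $S$-equivariant. This is precisely why the argument must go through the two spectral sequences, and it is where the real content lies.

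First I would take $\delta$-cohomology. By Theorem \ref{th5.1}, $H^k(\tilde\Omega(\V),\delta)=\delta_{k,0}\cF$, and on $H^0=\cF=\ker(\delta|_\V)$ the induced map is simply $(S-1)|_\cF$ (recall $S\cF=\cF$). Hence the $E_1$-page is the two-term complex $\cF\xrightarrow{S-1}\cF$ concentrated in vertical degree $0$; the spectral sequence degenerates at $E_2$, and, using $\ker((S-1)|_\cF)=\cC$, it gives
\[ H^n(\mathrm{Tot})=\begin{cases} \cC, & n=0,\\ \cF/(S-1)\cF, & n=1,\\ 0, & n\geq 2. \end{cases} \]
This is the ``long cohomology exact sequence'' of the mapping cone.

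Next I would take the horizontal $(S-1)$-cohomology first. Here the $E_1$-page consists of the two columns $K^\bullet=\ker(S-1)$ and $\Omega(\V)=\mathrm{coker}(S-1)$ (both are $\delta$-subquotient complexes of $\tilde\Omega(\V)$), and after taking $\delta$-cohomology the page again degenerates, producing short exact sequences
\[ 0 \to H^{n-1}(\Omega(\V)) \to H^n(\mathrm{Tot}) \to H^n(K) \to 0. \]
For $n\geq 2$ the middle term vanishes, so $H^{n-1}(\Omega(\V))=0$; this already yields $H^k(\Omega(\V))=0$ for all $k\geq 1$ with no information about $K$ in high degrees. For $n=1$ the sequence reads $0 \to H^0(\Omega(\V)) \to \cF/(S-1)\cF \to H^1(K) \to 0$, so it remains only to show $H^1(K)=0$, after which $H^0(\Omega(\V))=\cF/(S-1)\cF$ and the theorem is proved. (As a consistency check, the case $n=0$ recovers $H^0(K)=\cC$.)

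The step $H^1(K)=0$ is the main obstacle, and I expect to clear it by the following elementary observation: if $f\in\V$ and $(S-1)f\in\cF$, then $f\in\cF$. Indeed $(S-1)f\in\cF$ means $\frac{\ptl}{\ptl u^i_m}((S-1)f)=0$ for all $i,m$, which by \eqref{e2.01} rewrites as $\frac{\ptl f}{\ptl u^i_m}=S\frac{\ptl f}{\ptl u^i_{m-1}}$; if $f\notin\cF$, one chooses the largest $m$ admitting a nonzero partial (there are finitely many by axiom (i) of Definition \ref{def2.1}) and applies this relation at $m+1$, forcing that maximal partial to vanish, a contradiction. Granting this, any $S$-invariant $\delta$-closed $1$-form $\omega$ is exact by Theorem \ref{th5.1}, say $\omega=\delta f$; its $S$-invariance gives $\delta((S-1)f)=0$, i.e. $(S-1)f\in\cF$, whence $f\in\cF$ and $\omega=\delta f=0$. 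Thus $K^1$ contains no nonzero closed form, so $H^1(K)=0$, and the theorem follows.
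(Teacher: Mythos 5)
Your proof is correct, and it is essentially the paper's own route: the two-spectral-sequence computation for the mapping cone of $S-1$ is exactly the ``long cohomology exact sequence'' argument from \cite{BDSK09}, Section 3, that the paper invokes, with Theorem \ref{th5.1} as the only nontrivial input. The one place where you work harder than needed is the step $H^1(K)=0$: in fact $\ker (S-1)$ vanishes on $\tilde{\Omega}^k(\V)$ for every $k\geq 1$, since $S$ shifts the (finitely many) basis monomials $\delta u^{i_1}_{m_1}\wedge\ldots\wedge\delta u^{i_k}_{m_k}$ appearing in a given form along infinite orbits, so an $S$-invariant form of positive degree would need infinitely many nonzero terms; thus neither Theorem \ref{th5.1} nor your kernel lemma is required at that point.
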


In the same way as in \cite{DSK09} and \cite{BDSK09} we have identifications of $  \Omega^0 (\V)  $ with $ \V / (S-1) \V, \ \Omega^1 (\V) $ with $ \V^\ell, $  $ \Omega^2 (\V) $ with the space of skewadjoint $ \ell \times \ell $ matrix difference operators,
and $ \Omega^k (\V) $ for $ k>2 $ with the space of skewsymmetric $ k $-difference operators. With these identifications we have explicit formulas for $ \delta $, similar to that in the above quoted papers. We shall need only the first two of them $ (f \in \V, F \in \V^\ell): $
\begin{equation}\label{e5.03}
\delta (\smallint f) = \frac{\delta \smallint f}{\delta u}, \quad \delta (F) = D_F (S) - D_F(S)^\ast \, , 
\end{equation}
where $ D_F(S) $ is
the difference Frechet derivative 
of $ F \in \V^\ell $, defined by
\eqref{e3.06a}.
As a result, we obtain the following corollary of Theorem \ref{th5.2}.
\begin{corollary}\label{cor5.3}
	Let $ \V $ be an algebra of difference functions. Then 
	\begin{enumerate}
		\item[(a)] $ \Ker \frac{\delta }{\delta u} \subseteq \cF + (S-1) \V, $ and we have the equality if $ \V $ is normal. 
		\item[(b)] If $ F \in \Im \frac{\delta}{\delta u} $, then $F$ is closed, i.e. $ D_F (S) = D_F(S)^\ast $. If $ \V $ is normal, the converse holds. \qed
	\end{enumerate}
\end{corollary}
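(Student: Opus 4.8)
The plan is to read off both statements as the degree-$0$ and degree-$1$ parts of the cohomology computation of Theorem \ref{th5.2}, using the identifications $\Omega^0(\V)\cong\bar\V=\V/(S-1)\V$, $\Omega^1(\V)\cong\V^\ell$, $\Omega^2(\V)\cong\{\text{skewadjoint }\ell\times\ell\text{ difference operators}\}$, together with the explicit formulas \eqref{e5.03} for $\delta$ in these degrees. Essentially nothing new has to be proved: the corollary is a dictionary translation of Theorem \ref{th5.2}.

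For part (a), first I would record the elementary fact that $\frac{\delta}{\delta u}$ descends to $\bar\V$, i.e. $\cF+(S-1)\V\subseteq\Ker\frac{\delta}{\delta u}$. For $c\in\cF$ this is immediate from the definition \eqref{e3.06}, since $\frac{\partial c}{\partial u^i_n}=0$; for $(S-1)g$ one checks $\frac{\delta(Sg)}{\delta u}=\frac{\delta g}{\delta u}$ directly from \eqref{e2.01} by reindexing the sum in \eqref{e3.06}. Consequently, under $\Omega^0\cong\bar\V$ and the first formula in \eqref{e5.03}, the space $\Ker(\delta\colon\Omega^0\to\Omega^1)$ is exactly the image of $\Ker\frac{\delta}{\delta u}$ in $\bar\V$, and this image equals $H^0(\Omega(\V),\delta)$. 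When $\V$ is normal, Theorem \ref{th5.2} identifies $H^0$ with the image of $\cF$ in $\bar\V$, namely $(\cF+(S-1)\V)/(S-1)\V$. Pulling back along $\V\to\bar\V$ and combining with the trivial inclusion above then yields $\Ker\frac{\delta}{\delta u}=\cF+(S-1)\V$; the inclusion $\Ker\frac{\delta}{\delta u}\subseteq\cF+(S-1)\V$ is precisely the nontrivial half of this, and it is exactly what the vanishing of the higher cohomology in Theorem \ref{th5.2} supplies.

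For part (b), the forward implication holds for arbitrary $\V$ and is just $\delta^2=0$: if $F=\frac{\delta}{\delta u}\smallint f\in\Im\frac{\delta}{\delta u}=\Im(\delta\colon\Omega^0\to\Omega^1)$, then $\delta(F)=0$ in $\Omega^2$, and by the second formula in \eqref{e5.03} this reads $D_F(S)-D_F(S)^*=0$, i.e. $F$ is closed. For the converse under normality, a closed $F$ is precisely an element of $\Ker(\delta\colon\Omega^1\to\Omega^2)$, again by \eqref{e5.03}; since Theorem \ref{th5.2} gives $H^1(\Omega(\V),\delta)=0$, this kernel coincides with $\Im(\delta\colon\Omega^0\to\Omega^1)=\Im\frac{\delta}{\delta u}$, which is the assertion.

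The only genuine work is therefore upstream, in establishing the identifications of $\Omega^0,\Omega^1,\Omega^2$ with $\bar\V$, $\V^\ell$ and the skewadjoint operators and the formulas \eqref{e5.03} (carried out as in \cite{BDSK09,DSK09}), and in Theorem \ref{th5.2} itself; granting these, the corollary is immediate, and the main obstacle — acyclicity of the variational complex for normal $\V$ — has already been discharged. I would only take care to verify that the skewadjointness built into the identification of $\Omega^2$ is compatible with $\delta(F)=D_F(S)-D_F(S)^*$, so that $\delta(F)=0$ is literally the closedness condition $D_F(S)=D_F(S)^*$; this is immediate, since $(D_F(S)-D_F(S)^*)^*=-(D_F(S)-D_F(S)^*)$.
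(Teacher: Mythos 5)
Your proof is correct and is essentially the paper's own argument: the corollary carries a \qed precisely because, once the identifications of $\Omega^0$, $\Omega^1$, $\Omega^2$ and the formulas \eqref{e5.03} are granted, it is a dictionary translation of Theorem \ref{th5.2}, with the unconditional halves supplied, exactly as you say, by the trivial inclusion $\cF+(S-1)\V\subseteq\Ker\frac{\delta}{\delta u}$ and by $\delta^2=0$.

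One point of comparison with the printed statement of part (a): what you establish unconditionally is $\cF+(S-1)\V\subseteq\Ker\frac{\delta}{\delta u}$, while the corollary as printed asserts the reverse inclusion for arbitrary $\V$. The printed direction must be a misprint: it is precisely the implication that needs acyclicity, hence normality (for instance, in $\V=\Pc_1[u_n^{-1},\,\log(u_{n+1}/u_n)\,|\,n\in\ZZ]$, which is not normal, the element $\log(u_1/u)$ lies in $\Ker\frac{\delta}{\delta u}$ but not in $\cF+(S-1)\V$), and if it did hold unconditionally, the clause ``equality if $\V$ is normal'' would be vacuous, since the inclusion you prove holds always. So your reading --- trivial half unconditional, converse under normality, mirroring the structure of part (b) --- is the correct one, and your proof establishes precisely that.
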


\begin{examples}\label{ex5.4}
	\begin{enumerate}
		\item[(a)] The algebra $ \Pc_\ell $  is a normal algebra of difference functions with $ \cF =\cC = \FF.  $ Hence $ \Ker \frac{\delta}{\delta u } = \FF + (S-1) \Pc_\ell. $
		\item[(b)] The algebra $ \Pc_\ell [x] $ with $ S  $ extended from $ \Pc_\ell $ by $ S(x) =  x+1,  $ is normal with $ \cF = \FF [x] = (S-1) \cF.  $ Hence $ \Ker \frac{\delta }{\delta u} = (S-1) \Pc_\ell. $
		\item[(c)] The algebra $ \Pc_1 [ u^{-1}_n, \log u_n \, | \, n \in \ZZ] $ is a normal algebra of difference functions. 
	\end{enumerate}
\end{examples}

As explained in \cite{DSK13}, Lemma 4.3. any algebra of difference functions $ \V $ can be extended to a normal one, which can be taken to be a domain if $ \V $ is. 

The importance of the variational complex is revealed by the following theorem.
\begin{theorem}\label{th5.5}
Let $ H $ and $ K  \in \text{Mat}_{\ell \times \ell} \, \V [S. S^{-1}] $ be two compatible Hamiltonian difference operators and assume that $ K $  is non-degenerate (i.e. $ KM = 0 $ implies $ M = 0 $ for any $ M \in \text{Mat}_{\ell \times \ell} \, \V [S, S^{-1}] $). Let $ \xi_0, \xi_1, \xi_2 \in \Omega^1 (\V) = \V^\ell $ be such that 
\begin{equation}\label{e5.04}
K \xi_{n+1} = H \xi_n \text{ for } n = 0, 1. 
\end{equation}
If $ \xi_0 $ and $ \xi_1 $ are exact, them $ \xi_2 $ is closed (in the variational complex).
\end{theorem}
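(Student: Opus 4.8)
The plan is to reduce the statement to the vanishing of a single $2$-form, and then to extract that vanishing from the Jacobi identities of $H$ and $K$ together with their compatibility. First, by Corollary \ref{cor5.3}(b), exactness implies closedness, so the hypotheses give $D_{\xi_0}(S)=D_{\xi_0}(S)^\ast$ and $D_{\xi_1}(S)=D_{\xi_1}(S)^\ast$; moreover, by the identification \eqref{e5.03}, the desired conclusion that $\xi_2$ is \emph{closed} means exactly $\delta\xi_2:=D_{\xi_2}(S)-D_{\xi_2}(S)^\ast=0$ in $\mathrm{Mat}_{\ell\times\ell}\,\V[S,S^{-1}]$. Since $K$ is assumed non-degenerate ($KM=0\Rightarrow M=0$), it will suffice to establish the identity $K\,\delta\xi_2=0$, which is the form in which the computation below is meant to deliver the result. (Non-degeneracy is in fact used twice: to guarantee that $\xi_2$ is well defined by $K\xi_2=H\xi_1$, and to cancel the factor $K$ at the very end.)

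Next I would apply $\delta$, i.e. the operation $F\mapsto D_F(S)-D_F(S)^\ast$ of \eqref{e5.03}, to both recursion relations $K\xi_1=H\xi_0$ and $K\xi_2=H\xi_1$. Using the Leibniz rule for the Frechet derivative in the form $D_{A\eta}(S)=A(S)\,D_\eta(S)+D_{A,\eta}(S)$, where $D_{A,\eta}(S)$ is the ``coefficient'' part already isolated in the proof of Proposition \ref{prop4.2} (the displayed identity preceding \eqref{e4.08}), together with the skewadjointness $H^\ast=-H$, $K^\ast=-K$ from Proposition \ref{prop4.2}(a), each side becomes an explicit skewadjoint $2$-form built from the $D_{\xi_i}(S)$, from the coefficient derivatives of $H$ and $K$, and from $H,K$ themselves. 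The closedness of $\xi_0$ and $\xi_1$ then kills the contributions $D_{\xi_0}-D_{\xi_0}^\ast$ and $D_{\xi_1}-D_{\xi_1}^\ast$, leaving an identity relating $K\,\delta\xi_2$ to terms quadratic in $H,K$ and linear in the forms $\xi_0,\xi_1,\xi_2$, the last of which is re-expressed through $K\xi_2=H\xi_1$ and $K\xi_1=H\xi_0$.

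The crux, and the main obstacle, is to show that these residual quadratic terms cancel. Here I would invoke the Frechet-derivative form of the Jacobi identity, Proposition \ref{prop4.2}(b)(iii), for $H$ and for $K$ separately, together with its polarization for the pair $(H,K)$ — which is precisely the statement that $H$ and $K$ are \emph{compatible}, i.e. that $H+K$ is again Hamiltonian. Evaluating these three identities on the closed forms $\xi_0,\xi_1$ and simplifying via $K\xi_1=H\xi_0$ should reproduce, term by term, the residual expression from the previous step, yielding $K\,\delta\xi_2=0$. This is the genuinely computational heart of the argument, and it is exactly the point at which compatibility is indispensable: a single Hamiltonian structure does not propagate closedness across one step of the recursion. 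Conceptually this is Magri's theorem — the operator $N^\ast=K^{-1}H$ on $1$-forms is Nijenhuis (because $H,K$ form a compatible Poisson pair with $K$ non-degenerate), and a Nijenhuis operator sends a pair consisting of a closed form $\xi_0$ and its closed image $\xi_1=N^\ast\xi_0$ to a closed form $\xi_2=N^\ast\xi_1$.

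Finally, from $K\,\delta\xi_2=0$ and the non-degeneracy of $K$ I conclude $\delta\xi_2=0$, that is, $D_{\xi_2}(S)=D_{\xi_2}(S)^\ast$, so $\xi_2$ is closed in the variational complex, as claimed.
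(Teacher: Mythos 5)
Your proposal does not follow the paper's route, and as it stands it has a genuine gap exactly at the step you yourself call ``the genuinely computational heart.'' The paper does not prove Theorem \ref{th5.5} by any such computation: it invokes the difference analogue of the theory of Dirac structures, following \cite{D93}, \cite{BDSK09}, \cite{DSK13}. In that framework compatibility of $H$ and $K$ is encoded once and for all in the statement that a suitable subspace of $\V^\ell\oplus\V^\ell$ attached to the pair $(H,K)$ is a Dirac structure (maximal isotropic and closed under the Courant--Dorfman bracket), and the propagation of closedness along \eqref{e5.04} is then a formal consequence of isotropy and bracket closure. In your plan, by contrast, the cancellation of the residual quadratic terms via Proposition \ref{prop4.2}(b)(iii) for $H$, for $K$, and for $H+K$ is only asserted (``should reproduce, term by term''), never performed. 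Since everything you do carry out --- exactness implies closedness via Corollary \ref{cor5.3}(b), the identification of closedness of $\xi_2$ with $D_{\xi_2}(S)=D_{\xi_2}(S)^\ast$ via \eqref{e5.03}, and applying $\delta$ to the recursion relations --- is routine bookkeeping, the asserted cancellation \emph{is} the theorem, and the proposal contains no proof of it.

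There are, moreover, concrete structural reasons why the computation cannot close up in the form you describe. First, applying $\delta$ to $K\xi_2=H\xi_1$, using $K^\ast=-K$ and the Leibniz-type identity displayed just before \eqref{e4.08}, the $\xi_2$-dependent terms that appear are
\[
K\,D_{\xi_2}(S)+D_{\xi_2}(S)^\ast K
\qquad\text{and}\qquad
D_{K,\xi_2}(S)-D_{K,\xi_2}(S)^\ast ,
\]
(in your notation for the coefficient part); neither is of the form $K\bigl(D_{\xi_2}(S)-D_{\xi_2}(S)^\ast\bigr)$, and the coefficient part is linear in $\xi_2$ itself, not in $K\xi_2$, so it cannot be re-expressed through $\xi_0,\xi_1$ using the recursion relations. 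Second, the identities of Proposition \ref{prop4.2}(b)(iii) ``evaluated on $\xi_0,\xi_1$'' are identities in $\V^\ell$, whereas your target $K\,\delta\xi_2=0$ is an identity in $\text{Mat}_{\ell\times\ell}\,\V[S,S^{-1}]$; no term-by-term matching between objects of these two types is possible. The known direct proofs in the PDE case resolve both problems by keeping free arguments: one shows that the $2$-form $\delta\xi_2$ vanishes on pairs of vectors from $\Im K(S)$, i.e. $\smallint K\eta\cdot(\delta\xi_2)(K\zeta)=0$ for all $\eta,\zeta$, which by non-degeneracy of the pairing \eqref{e5.05} amounts to the sandwiched identity $K(\delta\xi_2)K=0$; only then does non-degeneracy of $K$, applied twice and combined with skewadjointness of $\delta\xi_2$, yield $\delta\xi_2=0$. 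Your argument would have to be restructured along these lines (or along the paper's Dirac-structure lines) before the compatibility hypothesis can even be brought to bear. Two smaller slips: $\xi_2$ is part of the data of the theorem, so non-degeneracy of $K$ is not needed to make it ``well defined'' (it gives uniqueness only); and the Nijenhuis operator $K^{-1}H$ you invoke does not exist in $\text{Mat}_{\ell\times\ell}\,\V[S,S^{-1}]$ without localization, so that conceptual remark is heuristic at best.
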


This theorem is well known in the theory of evolutionary PDE. Its simplest proof was
given in the framework of the theory of Dirac structures \cite{D93}, \cite{BDSK09}, \cite{DSK13}. A parallel theory of Dirac structures in the difference case can be developed without difficulty. In particular, this gives a proof of Theorem \ref{th5.5}.

The following symmetric bilinear form is used in the definition of a Dirac structure:
\begin{equation}\label{e5.05}
\V^\ell \times \V^\ell \rightarrow \bar{\V}, \ (F \, | \, G) = \smallint F \cdot G \, .\end{equation}
A proof, similar to that in \cite{BDSK09}, Proposition 1.3(a), shows that this form is non-degenerate. 
\section{The Lenard-Magri scheme.}
Let $ \V $ be an algebra of difference functions in $ \ell $ variables. Given two difference operators $ H(S) $ and $ K(S) : \V^{\ell}  \rightarrow \V^\ell,$ a sequence of elements $ \xi_0, \ldots, \xi_{N-1} \in \V^{ \ell }, \ N \geq 2, $ is called a \emph{Lenard-Magri sequence} if the following Lenard-Magri relations hold
\begin{equation}\label{e6.01}
K(S) \xi_{j} = H(S) \xi_{j-1}, \ j = 1, \ldots , N-1.
\end{equation}
For a difference operator $ L(S): \V^\ell \rightarrow \V^\ell $ define the bilinear form
\begin{equation}\label{e6.02}
\V^\ell \times \V^\ell \rightarrow \bar{\V},\, \langle F, G \rangle_L = (L(S) F \, | \, G).
\end{equation}
Note that this form is skewsymmetric if the operator $ L(S) $ is skewadjoint. 

The following theorem is analogous to that in the differential case, cf. \cite{D93}, \cite{BDSK09}.
\begin{theorem}\label{th6.1}
Let $ H(S) $ and $ K(S) $ be skewadjoint difference operators on $ \V^\ell, $ and let $ \xi_0, \ldots, \xi_{N-1} \in \V^\ell $ be a Lenard-Magri sequence. Then
\begin{enumerate}
\item[(a)] For all $ m,n \in \{ 0, \ldots, N-1\} $  one has 
\[ \langle \xi_m, \xi_n \rangle_H = 0 = \langle \xi_m, \xi_n \rangle_K \, . \]
\item[(b)] If $ \xi_j $ are exact, i.e. $ \xi_j = \frac{\delta h_j}{\delta u} $ for some $ h_j \in \V, \ j = 0, \ldots, N-1, $ then all the $ h_j $ are in involution with respect to both $ \la $-brackets, associated to the operators $ H $ and $ K $ via \eqref{e4.05}.
\item[(c)] If the following orthogonality condition holds:
\[ \text{span} \, \{ \xi_0, \ldots, \xi_{N-1} \}^\perp \subseteq \Im K(S) \]
where $ \perp $ stands for the orthogonal complement with respect to the bilinear form \eqref{e5.05}, then we can extend the sequence $ \xi_0, \ldots, \xi_{N-1} $ to an infinite Lenard-Magri sequence. 
\item[(d)] If $ \xi_0 $ and $ \xi_1 $ are exact, $ H(S) $ and $ K(S)  $ are compatible Hamiltonian operators and $ K(S) $ is non-degenerate (i.e. its kernel is finite-dimensional), then $ \xi_j = \frac{\delta \int h_j}{\delta u} $ for some
  $ h_j\in \V, \ j = 0, 1, \ldots, N-1, $ provided that $ \V $ is normal, and all
  these $ \int h_j $ are in involution with respect to the brackets, defined by \eqref{e4.04} for both $ H $ and $ K $. Furthermore, if $ N = \infty $ and the $\xi_j  $ span an infinite-dimensional subspace of $ \V^\ell, $ then
\[ \frac{du}{dt_j} = \{ \smallint h_j, u \}, \ j = 0, 1, \ldots,  \]
is a compatible integrable hierarchy of Hamiltonian difference equations.
	\end{enumerate}
\end{theorem}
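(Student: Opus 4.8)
The plan is to prove the four parts in order, doing the real work in part (a), deducing parts (b) and (c) quickly, and assembling part (d) from the preceding parts together with the variational-complex input.

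For part (a), I would first record the skewsymmetry of the forms \eqref{e6.02}: since the pairing $(F\,|\,G)=\smallint F\cdot G$ of \eqref{e5.05} is symmetric and $L(S)^\ast=-L(S)$ for $L\in\{H,K\}$, one has $\langle F,G\rangle_L=(L(S)F\,|\,G)=(F\,|\,L(S)^\ast G)=-(F\,|\,L(S)G)=-\langle G,F\rangle_L$. Next I would use the Lenard--Magri relations \eqref{e6.01} to couple the two forms. For $m\geq 1$, $K(S)\xi_m=H(S)\xi_{m-1}$ gives $\langle\xi_m,\xi_n\rangle_K=(H(S)\xi_{m-1}\,|\,\xi_n)=\langle\xi_{m-1},\xi_n\rangle_H$, while skewsymmetry of $\langle\cdot,\cdot\rangle_K$ together with the same relation applied in the second slot gives, for $n\geq 1$, $\langle\xi_m,\xi_n\rangle_K=\langle\xi_m,\xi_{n-1}\rangle_H$. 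Combining the two identities yields, for $m,n\geq 1$, the anti-diagonal invariance $\langle\xi_{m-1},\xi_n\rangle_H=\langle\xi_m,\xi_{n-1}\rangle_H$. Hence $\langle\xi_a,\xi_b\rangle_H$ is constant along each anti-diagonal $a+b=\mathrm{const}$ inside $\{0,\ldots,N-1\}^2$, a set of consecutive lattice points. Since $(a,b)$ and $(b,a)$ lie on the same such segment, constancy forces $\langle\xi_a,\xi_b\rangle_H=\langle\xi_b,\xi_a\rangle_H$, which with skewsymmetry gives $\langle\xi_a,\xi_b\rangle_H=0$ for all $a,b$. Feeding this back through $\langle\xi_m,\xi_n\rangle_K=\langle\xi_{m-1},\xi_n\rangle_H$ (and treating $m=0$ by skewsymmetry) shows $\langle\cdot,\cdot\rangle_K$ vanishes identically as well. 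I expect this anti-diagonal bookkeeping to be the only subtle point of part (a).

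Part (b) is then immediate: writing $\xi_j=\delta\smallint h_j/\delta u$ and using \eqref{e4.04}, $\{\smallint h_m,\smallint h_n\}_H=\smallint\frac{\delta h_n}{\delta u}\cdot H(S)\frac{\delta h_m}{\delta u}=(H(S)\xi_m\,|\,\xi_n)=\langle\xi_m,\xi_n\rangle_H=0$ by part (a), and the same computation with $K(S)$ gives involution for the second bracket. For part (c), part (a) yields $(H(S)\xi_{N-1}\,|\,\xi_m)=\langle\xi_{N-1},\xi_m\rangle_H=0$ for every $m\leq N-1$, so $H(S)\xi_{N-1}\in\{\xi_0,\ldots,\xi_{N-1}\}^\perp\subseteq\Im K(S)$ by hypothesis; thus a solution $\xi_N$ of $K(S)\xi_N=H(S)\xi_{N-1}$ exists, extending the sequence. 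The orthogonality hypothesis survives this step, since enlarging the spanning set only shrinks its orthogonal complement: $\{\xi_0,\ldots,\xi_N\}^\perp\subseteq\{\xi_0,\ldots,\xi_{N-1}\}^\perp\subseteq\Im K(S)$. Iterating produces an infinite Lenard--Magri sequence.

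The core of part (d) is propagating exactness along the whole chain, and I would do this by induction on $j$. The base case is the assumed exactness of $\xi_0,\xi_1$. Assuming $\xi_{j-1},\xi_j$ exact, the two relations $K(S)\xi_j=H(S)\xi_{j-1}$ and $K(S)\xi_{j+1}=H(S)\xi_j$ place $\xi_{j-1},\xi_j,\xi_{j+1}$ in the hypotheses of Theorem \ref{th5.5} (using compatibility of $H,K$ and non-degeneracy of $K$), so $\xi_{j+1}$ is closed; normality of $\V$ and Corollary \ref{cor5.3}(b) then upgrade closed to exact, giving $h_{j+1}\in\V$ with $\xi_{j+1}=\delta\smallint h_{j+1}/\delta u$. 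Once all $\xi_j$ are exact, part (b) shows the $\smallint h_j$ are in involution for both $H$ and $K$. Finally, when $N=\infty$ and the $\xi_j$ span an infinite-dimensional subspace, Proposition \ref{prop4.1} and the Lie-algebra homomorphism of Lemma \ref{lem1.1} (equivalently \eqref{e4.07}) show the flows $\frac{du}{dt_j}=\{\smallint h_j,u\}=H(S)\xi_j$ pairwise commute, because $[X_{H(S)\xi_i},X_{H(S)\xi_j}]$ is the evolutionary vector field attached to $\{\smallint h_i,\smallint h_j\}_H=0$; the infinite span then yields infinitely many independent integrals of motion, i.e. integrability. The main obstacle is exactly this propagation of exactness, where Theorem \ref{th5.5} (the difference Dirac-structure input) and normality are indispensable; the rest is assembly.
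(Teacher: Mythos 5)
Your proof is correct and follows essentially the same route as the paper, which simply outsources the details: part (a) is the classical Lenard--Magri anti-diagonal argument (the paper cites \cite{BDSK09}, Lemma 2.6), part (b) is the same one-line consequence of (a) and \eqref{e4.04}, part (c) is the iteration argument of \cite{BDSK09}, Proposition 2.9, and part (d) is exactly the paper's combination of Theorem \ref{th5.5} with Theorem \ref{th5.2} for $k=1$ (equivalently Corollary \ref{cor5.3}(b)) to propagate exactness, plus part (b) for involution.
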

\begin{proof}
a) goes back to Lenard and Magri, and can be found e.g. in \cite{BDSK09}, Lemma 2.6. (b) follows from (a) and \eqref{e4.04}. The proof of (c) is the same as in \cite{BDSK09}, Proposition 2.9. Claim (d) follows by Theorem \ref{th5.2} for $ k = 1 $ and Theorem \ref{th5.5}.
\end{proof}
\begin{remark}\label{rem6.2}
	Usually one begins a Lenard-Magri sequence with $ \xi_0 \in \Ker K(S).  $ If this sequence is infinite and one has another Lenard-Magri sequence that begins with $ \xi'_0 \in \Ker K(S),  $ then we also have $ \langle \xi_i, \xi'_j \rangle_{H \text{ or }K} = 0$ for all $ i, j, $ provided that both $ H(S) $ and $ K(S) $ are skewadjoint (see \cite{BDSK09}, Proposition 2.10(c)).
\end{remark}

\begin{remark}\label{rem6.3}
Let $ \xi_0 = 0 $ and $ \xi_1 = \frac{\delta h_1}{\delta u} \in \Ker K(S)$ for some $ h_1 \in \V. $ Then, under the assumptions on $ H $ and $ K $ of Theorem \ref{th5.5}, any $ \xi_2 \in \V^\ell,   $ such that $ K(S) \xi_2 = H(S) \xi_1. $ is closed. 
\end{remark}

The method of constructing infinite Lenard-Magri sequences, outlined in this section, works well in the PDE case, see \cite{BDSK09}, \cite{DSK13}. However, it doesn't work well for difference equations since it is difficult to verify the orthogonality condition of Theorem \ref{th6.1}(c). In the next section we use a different method.
\section{Integrable bi-Hamiltonian difference equations, associated to Hamiltonian operators of order $\leq 2$}

It follows from Theorem \ref{th2.1} that the following two difference operators are compatible (i.e. any their linear combination is a Hamiltonian difference operator):
\begin{equation}\label{e7.01}
K(S) = g(u) (g(u_1) S - g(u_{-1}) S^{-1} ),
\end{equation}
\begin{equation}\label{e7.02}
\begin{aligned}
H_2(S) = &  g(u) (g(u_1) (F(u) +F(u_1)S + F(u_1) g(u_2)  S^2)\\
& - g(u_{-1}) (F(u) + F(u_{-1}) S^{-1} + F(u_{-1}) g(u_{-2}) S^{-2}),\\
\end{aligned}
\end{equation}
where 
\begin{equation}\label{e7.03}
F' (u) g(u) = F(u), \ F'(u) \neq 0 \, .
\end{equation}
For $F=g=u$ these operators are listed in \cite{KMW13}.

Let 
\begin{equation}\label{e7.04}
\xi_0 =  \frac{1}{2g(u)}, \quad \xi_1 = F'(u) \, .
\end{equation}
It is straightforward to check the following:
\begin{equation}\label{e7.05}
K(S) \xi_0 = 0, \quad K(S) \xi_1 = H_2(S) \xi_0 \, . 
\end{equation}
It follows from \eqref{e7.05} that the difference equation
\begin{equation}\label{e7.06}
\frac{du}{dt_0} = g(u) (F(u_1) - F(u_{-1}))
\end{equation}
is bi-Hamiltonian (indeed, the RHS is equal to $ H_2(S) \xi_0 $). For $ F = u $ this is the Volterra chain \eqref{e1.14}; for F=$u^{\epsilon}$ this is the Kac-Moerbeke-Langmuir equation (see \cite{KMW13}).

We have the following compatible (by Theorem \ref{th6.1} and Proposition \ref{prop4.1}) with equation \eqref{e7.06} difference
equation $\frac{du}{dt_1} = H_2(S)\xi_1$, explicitly:  
\begin{equation}\label{e7.07}
  \frac{du}{dt_1}  =g(u)( F(u)F(u_1)+F(u_1)^2+F(u_1)F(u_2)-F(u)F(u_{-1})-F(u_{-1})^2-F(u_{-1})F(u_{-2})).
\end{equation}

We proceed to construct an infinite Lenard-Magri sequence, extending \eqref{e7.04} and \eqref{e7.05}. In fact, we shall consider a more general situation. Let $ \V $ be an algebra of difference functions in $ u, $ and let $ g(u) $ be an invertible element of $ \V $.

\begin{lemma}
\label{lem7.1}
Let $ K(S) $ be a difference operator, defined by \eqref{e7.01}, and let $ H(S) = \sum_k f_k S^k \in \V [S, S^{-1}] $ be an arbitrary skewadjoint difference operator.
Let $ \{ \xi_j  \}_{j = 0, \ldots, N-1} \subset \V, N\geq 2, $ be a Lenard-Magri sequence, such that $ \xi_0 = \frac{1}{2g(u)} \in \Ker K (S). $
Then
\begin{enumerate}
\item[(a)] $ \frac{1}{2 g(u)} H(S) \xi_{N-1} = (S-1)A_N, $ where 
\[ A_N = -g(u)g(u_{-1}) \sum_{\substack{i,j \geq 1 \\ i + j = N}} \xi_i S^{-1} (\xi_j) - \sum_{\substack{i,j \geq 0 \\ i+j = N-1}} \sum_{\substack{k \geq 1 \\ 0 \leq \ell \leq k-1}} S^\ell (f_{-k} \xi_i S^{-k} (\xi_j)). \]
\item[(b)] If $ 2S(A_N) = (1+S) g(u) \xi_N $ for some $ \xi_N \in \V, $ then $ K(S) \xi_N = H(S) \xi_{N-1}. $
\end{enumerate}
\end{lemma}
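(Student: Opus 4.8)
The plan is to prove (a) by a structured computation of $(S-1)A_N$, splitting $A_N=A_N'+A_N''$ into its two constituent sums, and then to deduce (b) from (a) by elementary algebra. Two facts will drive everything. First, since $g(u)$ is invertible, the Lenard--Magri relation $K(S)\xi_i=H(S)\xi_{i-1}$ with $K(S)$ as in \eqref{e7.01} rewrites as
\[
g(u_1)S(\xi_i)=g(u_{-1})S^{-1}(\xi_i)+\tfrac{1}{g(u)}H(S)\xi_{i-1},\qquad 1\le i\le N-1.
\]
Second, skewadjointness of $H(S)=\sum_k f_kS^k$ is precisely the identity $S^k(f_{-k})=-f_k$ for all $k$; in particular $f_0=0$. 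I would record these at the outset, as they are exactly the two inputs the calculation needs.

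For the first sum $A_N'=-g(u)g(u_{-1})\sum_{i,j\ge1,\,i+j=N}\xi_iS^{-1}(\xi_j)$, I would apply $S-1$ using $S\big(g(u)g(u_{-1})\xi_iS^{-1}(\xi_j)\big)=g(u_1)g(u)S(\xi_i)\xi_j$, and then substitute the rewritten Lenard--Magri relation for the factor $g(u_1)S(\xi_i)$. The two contributions of the shape $g(u)g(u_{-1})\,\xi_\bullet S^{-1}(\xi_\bullet)$ then cancel after relabelling $i\leftrightarrow j$ in the symmetric index set, leaving only
\[
(S-1)A_N'=-\!\!\sum_{\substack{a+b=N-1\\ a\ge0,\ b\ge1}}\!\!\xi_b\,H(S)\xi_a.
\]

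For the second sum $A_N''$, the essential point is that the inner sum over $\ell$ telescopes, $\sum_{\ell=0}^{k-1}(S^{\ell+1}-S^{\ell})=S^k-1$, so that $(S-1)A_N''=-\sum_{i+j=N-1}\sum_{k\ge1}(S^k-1)\big(f_{-k}\xi_iS^{-k}(\xi_j)\big)$. Expanding $S^k$ and invoking $S^k(f_{-k})=-f_k$ turns the $S^k$-part into $-f_kS^k(\xi_i)\xi_j$; collecting positive and negative frequencies and using $f_0=0$ gives $(S-1)A_N''=\sum_{a+b=N-1,\,a,b\ge0}\xi_b\,H(S)\xi_a$. Adding the two results, the sums differ only in whether $b=0$ is permitted, so everything cancels except the single $b=0$ term, namely $\xi_0H(S)\xi_{N-1}=\tfrac{1}{2g(u)}H(S)\xi_{N-1}$, which is (a). Then (b) is immediate: applying $S^{-1}$ to the hypothesis $2S(A_N)=(1+S)g(u)\xi_N$ yields $2A_N=g(u)\xi_N+g(u_{-1})S^{-1}(\xi_N)$, and subtracting gives $2(S-1)A_N=g(u_1)S(\xi_N)-g(u_{-1})S^{-1}(\xi_N)$; multiplying by $g(u)$ and using (a) identifies the right side as $K(S)\xi_N$ and the left as $H(S)\xi_{N-1}$.

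The main obstacle is purely bookkeeping: keeping the index ranges straight through the Lenard--Magri substitution and the $i\leftrightarrow j$ symmetrizations, and noticing that skewadjointness is what is needed both to collapse $S^k(f_{-k})$ and to annihilate the diagonal $f_0$-term that would otherwise survive. Once the telescoping in $A_N''$ and the cancellation in $A_N'$ are set up correctly, no genuinely hard step remains.
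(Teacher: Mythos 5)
Your proof is correct: the reformulation of skewadjointness as $S^k(f_{-k})=-f_k$ (so $f_0=0$), the telescoping $(S-1)\sum_{\ell=0}^{k-1}S^\ell=S^k-1$, the $i\leftrightarrow j$ cancellations over the symmetric index sets, and the final algebra in (b) all check out, and your use of the Lenard--Magri relations in $(S-1)A_N'$ only invokes them for $1\le i\le N-1$, which is exactly the range available.

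Your route is, however, organized differently from the paper's. The paper derives (a) rather than verifying it: it forms the generating function $\xi=\sum_{j=0}^{N-1}\xi_j z^{-j}$, uses $K(S)\xi_0=0$ and the Lenard--Magri relations to obtain $\xi K(S)\xi=z^{-1}\xi H(S)\xi-\xi H(S)\xi_{N-1}z^{-N}$, then converts both sides into total $S$-differences --- $\xi K(S)\xi=(S-1)\bigl(g(u)g(u_{-1})\,\xi S^{-1}(\xi)\bigr)$, which is your symmetrization performed once at the level of the series, and $\xi H(S)\xi=-(S-1)\sum_{k\ge 1}\sum_{\ell=0}^{k-1}S^\ell\bigl(f_{-k}\,\xi S^{-k}(\xi)\bigr)$, which is your skewadjointness-plus-telescoping step --- and finally extracts the coefficient of $z^{-N}$, from which the formula for $A_N$ emerges. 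So your argument is the paper's computation run in reverse: the paper discovers $A_N$, while you posit it, split $A_N=A_N'+A_N''$, and check $(S-1)A_N=\frac{1}{2g(u)}H(S)\xi_{N-1}$ directly. The generating-function derivation explains where $A_N$ comes from and establishes the analogous identity in every degree of $z$ simultaneously; your version is more elementary and self-contained, and it makes the role of each hypothesis visible --- for instance, it shows that $K(S)\xi_0=0$ never needs to be invoked separately, being automatic from $\xi_0=\frac{1}{2g(u)}$. Part (b) is essentially identical in both proofs: $(1-S^{-1})(1+S)=S-S^{-1}$, then multiply by $g(u)$ to recognize $K(S)\xi_N$.
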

\begin{proof}
Let $ \xi = \sum_{j =0}^{N-1} \xi_j z^{-j}. $ Since $ K(S) \xi_0 = 0, $ we have $ K(S) \xi = \sum_{j=1}^{N-1} K(S) \xi_j z^{-j}. $ Using \eqref{e6.01}, we obtain $ K(S) \xi = \sum_{j=1}^{N-1} H(S) \xi_{j-1} z^{-j} = z^{-1} \sum_{j=0}^{N-2} H(S) \xi_j z^{-j}. $ Hence, multiplying by $ \xi, $ we obtain
\begin{equation}\label{e7.08}
\xi K(S) \xi = z^{-1} \xi H(S) \xi -\xi H(S) \xi_{N-1} z^{-N}.
\end{equation}
Note that, since $ K(S) = g(u) (S-S^{-1}) \circ g(u),  $ we have 
\[ \xi K(S) \xi = \xi g(u) S (g(u)\xi) -\xi g(u) S^{-1} (g(u) \xi) = S(\xi g(u) S^{-1} (\xi g(u))) - \xi g(u) S^{-1} (g(u) \xi). \]
Thus
\begin{equation}\label{e7.09}
\xi K(S) \xi = (S-1) (g(u) \xi S^{-1} (g(u)\xi)).
\end{equation}
Similarly, since $ H(S) = \sum_{k \geq 1} (f_k S^k - S^{-k} \circ f_k), $ we obtain
\begin{equation}\label{e7.10}
\xi H(S) \xi = \sum_{k \geq 1} (S^k-1)  ( \xi S^{-k} (f_k \xi)).
\end{equation}
Using \eqref{e7.09} and \eqref{e7.10}, we can rewrite \eqref{e7.08} as follows:
\begin{equation}\label{e7.11}
(S-1) (g(u)g(u_{-1}) \xi S^{-1} (\xi)) = -z^{-1} \sum_{\substack{k \geq 1 \\ 0 \leq \ell \leq k-1}} S^\ell (f_{-k} \xi S^{-k} (\xi)) - \xi H(S) \xi_{N-1} z^{-N}.
\end{equation}
Using that for any $ k \in \ZZ $ one has 
\[ \xi S^{-k} (\xi) = \sum_{p \geq 0} z^{-p} \sum_{\substack{0 \leq i,j \leq N-1 \\ i + j = p}} \xi_i S^{-k} (\xi_j), \]
we can rewrite \eqref{e7.11} as follows:
\[ \begin{aligned}
&(S-1) (g(u)g(u_{-1}) \sum_{p \geq 0} \sum_{\substack{0 \leq i,j \leq N-1 \\ i + j = p}} \xi_i S^{-1} (\xi_j) z^{-p}) \\
&= -(S-1) \sum_{p \geq 1} \sum_{\substack{0 \leq i,j \leq N-1 \\ i + j = p-1}} \sum_{\substack{k \geq 1 \\ 0 \leq \ell \leq k-1}} S^\ell (f_{-k} \xi_i S^{-k} (\xi_j))z^{-p} \\
&- \sum_{0 \leq j \leq N-1} \xi_j H(S) \xi_{N-1} z^{-(N+j)} \, .  
\end{aligned}
 \]
Looking at the coefficients of $ z^{-N} $ of this identity, we obtain claim (a) of the lemma. 

By claim (a) we have:
\[ \frac{1}{g(u)} H(S) \xi_{N-1} = 2 (S-1) S^{-1} S(A_N). \]
Hence, by the assumption of claim (b), we see that $ \frac{1}{g(u)} H(S) \xi_{N-1} = (1-S^{-1}) (1+S) g(u) \xi_N, $ and therefore $ H(S) \xi_{N-1} = g(u) (S-S^{-1}) \circ g(u) \xi_N = K(S) \xi_N,$ proving (b).
\end{proof}

Now we consider the special case of $ H(S)=H_2(S), $ given by \eqref{e7.02}, \eqref{e7.03}. Consider the difference operator
\[ D(S) = (S^2 \circ F(u_{-1}) +(S-1) \circ F(u) - S^{-1} \circ F(u_1)) \circ g(u).  \]
It is straightforward to check that the difference operator $ H_2(S), $ given by \eqref{e7.02}, \eqref{e7.03}, is expressed via $ D(S) $ as follows:
\begin{equation}\label{e7.12}
H_2(S) = g(u) \circ (1+S^{-1}) \circ D(S) \, . 
\end{equation}
\begin{lemma}
\label{lem7.2}
Let $ \eta_N = 2 S(A_N), $ where $ A_N $ is as in Lemma \ref{lem7.1}(a), let $ \zeta_N = D(S) \xi_{N-1}, $ and let $ \xi_N = \frac{1}{2 g(u)} (\eta_N - \zeta_N). $ Then
\begin{enumerate}
\item[(a)] $ S (\eta_N - \zeta_N) = \eta_N + \zeta_N \, .$
\item[(b)] $ (1+S) g(u) \xi_N = \eta_N \, . $
\end{enumerate}
\end{lemma}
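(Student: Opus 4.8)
The plan is to notice first that part (b) is a purely formal consequence of part (a), so that all the content lies in (a). Indeed, from the definition $\xi_N = \frac{1}{2g(u)}(\eta_N - \zeta_N)$ we have $2g(u)\xi_N = \eta_N - \zeta_N$, whence $(1+S)g(u)\xi_N = \frac12\big((\eta_N-\zeta_N)+S(\eta_N-\zeta_N)\big)$; substituting $S(\eta_N-\zeta_N)=\eta_N+\zeta_N$ from (a) collapses the right-hand side to $\eta_N$. So I would establish (a) and then deduce (b) in one line.

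For (a), I would first rewrite the claim $S(\eta_N-\zeta_N)=\eta_N+\zeta_N$ in the equivalent form $(S-1)\eta_N=(S+1)\zeta_N$, obtained by expanding $S$ and moving the $S\eta_N$ and $\zeta_N$ terms to opposite sides. Since $\eta_N = 2S(A_N)$ and $S$ commutes with $S-1$, I compute $(S-1)\eta_N = 2S\big((S-1)A_N\big)$. Now Lemma \ref{lem7.1}(a), applied with $H(S)=H_2(S)$, gives $(S-1)A_N = \frac{1}{2g(u)}H_2(S)\xi_{N-1}$, so that $(S-1)\eta_N = S\big(\frac{1}{g(u)}H_2(S)\xi_{N-1}\big)$.

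The key input is then the factorization \eqref{e7.12}, namely $H_2(S)=g(u)\circ(1+S^{-1})\circ D(S)$, which cancels the prefactor $\frac{1}{g(u)}$ and identifies $\frac{1}{g(u)}H_2(S)\xi_{N-1}=(1+S^{-1})D(S)\xi_{N-1}=\zeta_N+S^{-1}\zeta_N$, using $\zeta_N=D(S)\xi_{N-1}$. Applying the automorphism $S$ turns this element into $S\zeta_N+\zeta_N=(S+1)\zeta_N$, which is precisely the reformulated identity. I expect no serious obstacle here: the only point requiring care is the bookkeeping between $S$ acting as a difference operator (in $S-1$, $S+1$, $1+S^{-1}$) and $S$ acting as the algebra automorphism applied to a single element, together with the operator ordering in \eqref{e7.12}. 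The definition of $\xi_N$ has evidently been engineered exactly so that this cancellation goes through and (b) holds.
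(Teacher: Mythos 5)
Your proposal is correct and follows essentially the same route as the paper: both proofs combine Lemma \ref{lem7.1}(a) (applied with $H=H_2$) with the factorization \eqref{e7.12} to get the key identity for (a) --- the paper writes it as $(1-S^{-1})\eta_N=(1+S^{-1})\zeta_N$, you write the equivalent $S$-shifted form $(S-1)\eta_N=(S+1)\zeta_N$ --- and both deduce (b) from (a) by the identical one-line averaging computation.
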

\begin{proof}
By \eqref{e7.12} we have:
\[ \frac{1}{g(u)} H_2(S) \xi_{N-1} = (1 + S^{-1}) D(S) \xi_{N-1} = (1+S^{-1}) \zeta_N \, . \]
Hence, using Lemma \ref{lem7.1} (a), we see that 
\[ (1-S^{-1}) \eta_N = (1+S^{-1}) \zeta_N, \]
proving (a). The LHS of (b) is equal to 
\[ \half (1+S) (\eta_N - \zeta_N) = \half ((\eta_N - \zeta_N) + S(\eta_N-\zeta_N)) = \eta_N  \]
by (a), proving (b).
\end{proof}
\begin{proposition}
  \label{prop7.3} Let $ \V $ be an algebra of difference functions in $ u, $ let $ g(u) $ be an invertible function of $ \V, $ and $ F(u) \in \V $ be such that $ F'(u) \in \V $ and \eqref{e7.03} holds. Let $ K(S) $ and $ H_2(S) $ be the difference operators, defined by \eqref{e7.01} and \eqref{e7.02} respectively. Let $ N \geq 2 $ and let   $ \xi_0, \xi_1, \ldots, \xi_{N-1} $,
  where $\xi_0$ and $\xi_1$ are as in \eqref{e7.04},
  be a Lenard-Magri sequence for these operators, i.e. \eqref{e6.01} holds. Let $ \xi_N = \frac{1}{2 g(u)} (\eta_N - \zeta_N) $, as in Lemma \ref{lem7.2}. Then
\begin{enumerate}
\item[(a)] $ K(S) \xi_N = H_2(S) \xi_{N-1}, $ i.e. \eqref{e6.01} holds for $ \xi_0, \xi_1, \ldots, \xi_N. $
\item[(b)] For each $ j \geq 0 $ there exists $ h_j $ in an algebra of difference functions extension of $ \V,  $ such that $ \xi_j = \frac{\delta}{\delta u} \smallint h_j. $
\item[(c)] The hierarchy of difference equations 
\begin{equation}\label{e7.13}
\frac{du}{dt_{N-1}} = K(S) \xi_N = H_2(S) \xi_{N-1}, \ N = 1, 2, \ldots
\end{equation}
is an integrable system of compatible bi-Hamiltonian equations, for which $ \smallint h_j, \ j \geq 0,  $ are integrals of motion in involution. 
\end{enumerate}
\end{proposition}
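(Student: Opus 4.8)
My plan is to dispatch (a) by directly splicing together the two lemmas of this section, to derive (b) and the involution half of (c) from Theorem \ref{th6.1}(d) once its hypotheses are checked, and to reserve the genuine work for the infinite-dimensionality needed to finish (c). For (a) I would combine Lemmas \ref{lem7.1} and \ref{lem7.2}. By definition $\eta_N=2S(A_N)$, and Lemma \ref{lem7.2}(b) gives $(1+S)g(u)\xi_N=\eta_N=2S(A_N)$; this is exactly the hypothesis of Lemma \ref{lem7.1}(b) with $H(S)=H_2(S)$, so $K(S)\xi_N=H_2(S)\xi_{N-1}$. As $g(u)$ is invertible, $\xi_N=\frac{1}{2g(u)}(\eta_N-\zeta_N)$ again lies in $\V$; hence, starting from $\xi_0,\xi_1$ of \eqref{e7.04} (which satisfy the base relations \eqref{e7.05}), the construction iterates without obstruction and produces an infinite Lenard-Magri sequence $\{\xi_j\}_{j\geq 0}\subset\V$ for the compatible pair $(H_2,K)$.

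For (b) I would invoke Theorem \ref{th6.1}(d), checking its hypotheses. The operators $H_2(S),K(S)$ are compatible Hamiltonian operators, as recorded after \eqref{e7.02}; and $K(S)=g(u)\circ(S-S^{-1})\circ g(u)$ is non-degenerate in the sense of Theorem \ref{th6.1}(d), its kernel being $g(u)^{-1}$ times the $S^2$-invariant elements (finite-dimensional in the algebras at hand). The two seeds are exact: $\xi_1=F'(u)=\frac{\delta}{\delta u}\smallint F(u)$, while, using $F'(u)g(u)=F(u)$, one has $\frac{1}{2g(u)}=\tfrac12(\log F(u))'$, so $\xi_0=\frac{\delta}{\delta u}\smallint\tfrac12\log F(u)$ after passing to an algebra of difference functions extension $\tilde\V$ of $\V$ adjoining $\log F(u)$; by the remark following Examples \ref{ex5.4} this $\tilde\V$ may be taken normal. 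Theorem \ref{th6.1}(d), applied to the infinite sequence of (a), then yields $h_j\in\tilde\V$ with $\xi_j=\frac{\delta}{\delta u}\smallint h_j$ (proving (b)) and shows that all the $\smallint h_j$ are in involution with respect to both brackets.

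For the integrability in (c) it remains, by the last clause of Theorem \ref{th6.1}(d), to show that $\{\xi_j\}_{j\geq 0}$ spans an infinite-dimensional subspace of $\V$. I would prove by induction that $\xi_N$ has order exactly $N-1$, where the order of an element means the largest $n$ with nonvanishing $\frac{\ptl}{\ptl u_n}$-derivative. The recursion $\xi_N=\frac{1}{2g(u)}(2S(A_N)-D(S)\xi_{N-1})$ produces in both $\eta_N=2S(A_N)$ and $\zeta_N=D(S)\xi_{N-1}$ a nominal top contribution in $u_N$, and the point is that these two cancel (as one sees already in $\xi_2=F'(u)(F(u_{-1})+F(u)+F(u_1))$), leaving a surviving $u_{N-1}$-term with nonzero coefficient. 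Strictly increasing orders make the flows $H_2(S)\xi_{N-1}=K(S)\xi_N$ linearly independent, hence, $K(S)$ being linear, make the $\xi_N$ linearly independent, so their span is infinite-dimensional. Theorem \ref{th6.1}(d) then delivers the compatible integrable bi-Hamiltonian hierarchy \eqref{e7.13}, with the $\smallint h_j$, $j\geq 0$, as integrals of motion in involution.

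I expect the one genuine obstacle to be this last step: one must control the leading behaviour of the recursion, verifying both that the two $u_N$-order terms in $\eta_N-\zeta_N$ cancel and that the resulting $u_{N-1}$-coefficient does not vanish, so that the order of $\xi_N$ (and hence of the flows) genuinely increases with $N$ rather than stabilizing. Everything preceding it is a direct appeal to Lemmas \ref{lem7.1}, \ref{lem7.2} and to Theorem \ref{th6.1}(d).
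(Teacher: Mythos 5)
Your parts (a) and (b) follow the paper's proof essentially verbatim: (a) is exactly the splice of Lemma \ref{lem7.1}(b) with Lemma \ref{lem7.2}(b), and (b), together with the involution statement, is the appeal to Theorem \ref{th6.1}(d) (compatibility of the pair, non-degeneracy of $K$, exactness of the two seeds, passage to a normal extension); your explicit primitive $\xi_0=\frac{\delta}{\delta u}\smallint\tfrac12\log F(u)$, obtained from $g(u)F'(u)=F(u)$, is a fine substitute for the paper's $h_0=\smallint \frac{du}{2g(u)}$.

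The gap is in (c). The claim you yourself flag as ``the one genuine obstacle'' --- that the orders of the $\xi_j$ grow strictly, so that they span an infinite-dimensional subspace --- is precisely the hypothesis of Theorem \ref{th6.1}(d) that you never verify, and the route you propose (tracking the leading terms of $\eta_N=2S(A_N)$ and $\zeta_N=D(S)\xi_{N-1}$, proving that their $u_N$-contributions cancel and that the resulting $u_{N-1}$-coefficient is nonzero) is left unfinished and is needlessly hard. The paper closes this step in one line, using the relation you have already established in (a): from $K(S)\xi_N=H_2(S)\xi_{N-1}$, compare top-order terms. Since $K(S)$ has leading term $g(u)g(u_1)S$ with invertible coefficient, the left-hand side has order exactly $\ord \xi_N+1$; since $H_2(S)$ has leading term $g(u)g(u_1)F(u_1)g(u_2)S^2$ with nonzero coefficient, the right-hand side has order $\ord \xi_{N-1}+2$. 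Hence $\ord \xi_N=\ord \xi_{N-1}+1$, and since $\ord \xi_1=0$, the orders strictly increase, which gives linear independence of the $\xi_j$, $j\geq 1$, with no need to examine the internal structure of $\eta_N-\zeta_N$ at all; in particular, the cancellation of the $u_N$-terms that worries you is automatic, being forced by the Lenard--Magri relation itself. With this observation inserted, your argument becomes complete and coincides with the paper's.
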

\begin{proof}
Claim (a) follows from Lemma \ref{lem7.1} (b) and Lemma \ref{lem7.2} (b). Hence we have an infinite sequence $ \xi_0 , \xi_1, \xi_2, \ldots $ in $ \V$, where $\xi_0$ and $\xi_1$ are as in \eqref{e7.04}, satisfying the Lenard-Magri relation \eqref{e6.01}. The elements $ \xi_j $ for $ j \geq 1 $ are linearly independent, since, clearly, by \eqref{e6.01}, $ \ord \, \xi_{j+1} = \ord \, \xi_j +1  $ for $ j \geq 1 $ and $ \ord \, \xi_1 = 0. $

Since the difference operators $ K(S) $ and $ H_2(S) $ are compatible Hamiltonian operators, $ K(S) $ is non-degenerate and obviously both $\xi_0 , \xi_1$ are variational derivatives, by Theorem \ref{th6.1} (b) there exists $ h_j $ in a normal extension of $ \V, $ such that $ \xi_j = \frac{\delta}{\delta u} \smallint h_j $ (see Theorem \ref{th5.2} for $ k = 1 $), and all the $ \smallint h_j $ are in involution for both $ K(S) $ and $ H_2(S) $ by Theorem \ref{th6.1} (d). Hence, by the same theorem, \eqref{e7.13} is a hierarchy of compatible bi-Hamiltonian difference equations 
which are linearly independent. Hence this hierarchy is integrable. 
\end{proof}

The first few conserved densities for the hierarchy \eqref{e7.13} are as follows:
\[ 
\begin{aligned}
& h_0 = \smallint \frac{du}{2g(u)}, \ h_1 = F(u), \ h_2 = \half F(u)^2 + F(u) F(u_1), \\
& h_3 = \frac{1}{3} F(u)^3 + F(u)^2 F(u_1) + F(u)F(u_1)^2 + F(u) F(u_1) F(u_2) \, .
\end{aligned} \]
The corresponding variational derivatives $ \xi_j := \frac{\delta h_j}{\delta u} $ are as follows: 
\[ 
\begin{aligned}
 \xi_0 & =  \frac{1}{2g(u)}, \ \xi_1 = F'(u), \ \xi_2 = F'(u) (F(u_{-1}) + F(u) + F(u_1)), \\
 \xi _3 & = F'(u)( F(u_{-1}) (F(u_{-2}) + F(u_{-1}) +F(u))  +  ( F(u_{-1}) + F(u)) (F(u) + F(u_1)  ) \\
& + F(u_1) (F(u) + F(u_1) + F(u_2))) \, . \\
 \end{aligned} \]

\begin{conjecture}\label{con7.4}
The hierarchy of difference evolution equations \eqref{e7.13} coincides with the following hierarchy of Lax type equations 
\begin{equation}\label{e7.14}
\frac{dL}{dt_N} = \left[ \left(L^{2N+2}\right)_+, L \right], \ N = 0, 1, 2, \ldots
\end{equation}
where $ L = S+F(u)S^{-1} \in \V [S, S^{-1}] $ and the subscript +, as usual, means taking terms with non-negative powers of $ S. $ Moreover, the integrals of motion $ \int h_N $ constructed in Proposition \ref{prop7.3} are given by
\begin{equation}\label{e7.15}
\int h_N = \frac{1}{2N} \int \Res L^{2N}, \ N = 1, 2, \ldots, 
\end{equation}
where $ \Res : \V [S, S^{-1}] \rightarrow \V $ means taking the coefficient of $ S^0. $
\end{conjecture}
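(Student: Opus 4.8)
The plan is to treat both assertions of the conjecture together, since both reduce to showing that the residues $\frac{1}{2N}\Res L^{2N}$ are exactly the Hamiltonian densities of the hierarchy \eqref{e7.13}. I would begin by checking that \eqref{e7.14} is well posed. As $L = S + F(u)S^{-1}$ involves only the odd powers $S^{\pm1}$, each even power $L^{2N+2}$ is a difference operator in even powers of $S$, so $(L^{2N+2})_+\in\V[S^2]$ and $[(L^{2N+2})_+, L]$ is again odd in $S$. On the other hand, $[L^{2N+2}, L]=0$ gives $[(L^{2N+2})_+, L] = -[(L^{2N+2})_-, L]$, whose degree is forced into $\{-1,0\}$; combined with the parity this leaves only degree $-1$, so $[(L^{2N+2})_+, L] = \phi_N S^{-1}$ for a single $\phi_N\in\V$ and, since $F'(u)\neq 0$, \eqref{e7.14} becomes the evolution equation $\frac{du}{dt_N} = \phi_N/F'(u)$. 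A direct computation of $[(L^2)_+, L] = F(u)(F(u_1)-F(u_{-1}))S^{-1}$ reproduces \eqref{e7.06}, confirming the $N=0$ case.

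Next I would compute the variational derivatives of the proposed Hamiltonians. By the cyclicity of $\smallint\Res$ and $\delta L = F'(u)\,\delta u\,S^{-1}$ one has $\delta\smallint\Res L^{2N} = 2N\,\smallint\Res(L^{2N-1}\delta L)$; extracting the coefficient of $S^0$ and integrating by parts once yields
\[ \frac{\delta}{\delta u}\,\smallint\Res L^{2N} = 2N\,F'(u)\,S^{-1}\big([L^{2N-1}]_S\big), \]
where $[\,\cdot\,]_S$ denotes the coefficient of $S^1$. Thus $h_N := \frac{1}{2N}\Res L^{2N}$ has variational derivative $\xi_N = F'(u)\,S^{-1}([L^{2N-1}]_S)$, which for $N=1,2$ gives $F'(u)$ and $F'(u)(F(u_{-1})+F(u)+F(u_1))$, matching the list after Proposition \ref{prop7.3}. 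That all $\smallint h_N$ are conserved (hence in involution) along every flow follows from the difference trace identity $\smallint\Res[A,B]=0$.

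The heart of the proof, and the step I expect to be the main obstacle, is to show that the Lax flow is bi-Hamiltonian with exactly the operators \eqref{e7.01} and \eqref{e7.02}:
\[ \frac{du}{dt_N} = H_2(S)\,\xi_N = K(S)\,\xi_{N+1}. \]
These two identities, established for the residue-defined $\xi_N$ above, say that $\{\xi_N\}$ is a Lenard-Magri sequence and that \eqref{e7.14} has the form \eqref{e7.13}. To prove them I would realize $K(S)$ and $H_2(S)$ as the compatible linear and quadratic Adler-Gelfand-Dickey Poisson structures on difference operators (in the framework of Kuperschmidt's difference calculus \cite{Ku85}) and use the Adler trace form to equate $H_2(S)\frac{\delta}{\delta u}\smallint\Res L^{2N}$ with the Hamiltonian vector field of $\smallint\Res L^{2N}$, which the formalism in turn identifies with $[(L^{2N+2})_+, L]$. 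The genuine difficulty is that $L = S + F(u)S^{-1}$ is a constrained operator---its $S^1$-coefficient is frozen to $1$ and its $S^0$-coefficient to $0$---so one cannot apply the generic Gelfand-Dickey brackets directly but must perform a Dirac reduction onto the submanifold $\{S + F\,S^{-1}\}$ and verify that the reduced brackets are precisely \eqref{e7.01} and \eqref{e7.02}. Should this computation prove intractable, a fallback is an inductive symmetry argument: the Lax flows commute by the Zakharov-Shabat identity and the bi-Hamiltonian flows by Proposition \ref{prop7.3}, both hierarchies share the flow \eqref{e7.06}, and one would show that the evolutionary symmetries of \eqref{e7.06} of each order form a one-dimensional space, forcing the two hierarchies to agree up to scale.

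Finally, to match the residue sequence with the specific sequence of Proposition \ref{prop7.3}, note that both satisfy $K(S)\xi_{N+1} = H_2(S)\xi_N$ and agree at $\xi_1 = F'(u)$; the solution of the recursion is determined only up to $\Ker K(S)$, i.e. up to lower members of the hierarchy, and this residual ambiguity is pinned down by comparing orders (one checks $\ord\xi_N = N-1$ with a fixed leading coefficient). This identifies $\frac{1}{2N}\smallint\Res L^{2N}$ with the Hamiltonian $\smallint h_N$ of Proposition \ref{prop7.3}, completing both claims.
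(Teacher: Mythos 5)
First, a point of order: the statement you are proving is Conjecture \ref{con7.4}, and the paper offers \emph{no} proof of it --- the authors only note that the $N=0$, $F(u)=u$ case (Volterra) together with \eqref{e7.15} appears in \cite{88}. So there is no paper argument to compare against; your proposal must stand on its own as a proof, and it does not. Its preliminary parts are correct and worthwhile: the parity/degree argument showing $[(L^{2N+2})_+,L]=\phi_N S^{-1}$, so that \eqref{e7.14} is a well-posed scalar evolution equation; the explicit check that $N=0$ reproduces \eqref{e7.06}; the computation $\frac{\delta}{\delta u}\smallint \Res L^{2N}=2N\,F'(u)\,S^{-1}\bigl([L^{2N-1}]_S\bigr)$ with its verification against $\xi_1,\xi_2$; and conservation of all $\smallint\Res L^{2N}$ along all Lax flows via the trace identity. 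But the step you yourself call ``the heart of the proof'' is never carried out: you do not establish either identity $\frac{du}{dt_N}=H_2(S)\xi_N$ or $\frac{du}{dt_N}=K(S)\xi_{N+1}$ for the residue-defined gradients. The Dirac reduction of the Adler--Gelfand--Dickey structures onto the constrained submanifold $\{S+F(u)S^{-1}\}$ is precisely the content of the conjecture in disguise, and you concede it may be ``intractable''; the fallback --- that the evolutionary symmetries of \eqref{e7.06} of each order form a one-dimensional space --- is itself a substantial unproven theorem (of the type appearing in the symmetry classification of Volterra-type lattices), not a lemma one may invoke in passing. A proof cannot rest on a disjunction of two unexecuted strategies.

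The final matching step also has a genuine flaw even if the heart were granted. The Lenard--Magri recursion $K(S)\xi_{N+1}=H_2(S)\xi_N$ determines $\xi_{N+1}$ only up to $\Ker K(S)$, and since $K(S)=g(u)(S-S^{-1})\circ g(u)$, this kernel is spanned by $g(u)^{-1}$, i.e.\ by $\xi_0$ --- a \emph{lowest}-order, not leading-order, contribution; moreover this ambiguity propagates down the sequence as linear combinations of the lower $\xi_j$ (since $H_2(S)g(u)^{-1}=2K(S)\xi_1$, etc.). Consequently your claim that the ambiguity ``is pinned down by comparing orders (one checks $\ord\,\xi_N=N-1$ with a fixed leading coefficient)'' cannot work: two admissible solutions of the recursion share leading terms and differ precisely in the low-order part. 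To prove the exact equality \eqref{e7.15} (rather than equality modulo lower integrals of motion) you would have to compare the residue gradients term by term with the specific representatives constructed in Lemma \ref{lem7.2} and Proposition \ref{prop7.3}, namely $\xi_N=\frac{1}{2g(u)}(\eta_N-\zeta_N)$ with $\eta_N=2S(A_N)$ and $\zeta_N=D(S)\xi_{N-1}$ --- an induction you have not attempted. Until both this and the central identification are done, what you have is a consistency check of the conjecture (correct in all the low-order instances you computed), not a proof.
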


The fact that the Volterra equation \eqref{e1.14} coincides with \eqref{e7.14} for $ N= 0 $ and $ F(u) = u $, along with the integrals of motion \eqref{e7.15}, was pointed out in \cite{88}.


\begin{remark}\label{rem7.5}
By Remark \ref{rem2.7}, given $ a \in \cF, $ the Hamiltonian operator of order 1 with $ f_1 = \frac{g}{a} S \left( \frac{g}{a}\right) $ is compatible with the Hamiltonian operator of order 2 with 
\[ f_1 = gS(g) \left( \frac{S^{-1} (a)}{a} F + S \left( \frac{S(a)F}{a}  \right)  \right), \quad f_2 = gS^2 (g) S(F),  \]
where $ \frac{\ptl g}{\ptl u_i} = 0 = \frac{\ptl F}{\ptl u_i} $ for $ i \geq 1, \ g \frac{\ptl F}{\ptl u} = a F. $ We have a generalization of the Lenard-Magri scheme, studied in this section with 
\[ \xi_0 = \frac{a}{2 g(u)}, \quad \xi_1 = S(a) S^{-1} (a) \frac{\ptl F}{\ptl u}, \]
so that \eqref{e7.05} holds. As a result, we get a bi-Hamiltonian equation, generalizing \eqref{e7.06}:
\begin{equation}\label{e7.16}
\frac{du}{dt_0} = g(S^2 (a) S(F) - S^{-2} (a) S^{-1} (F)  ) \, .
\end{equation}
The same arguments show that the Lenard-Magri sequence extends to infinity, hence equation \eqref{e7.16} is integrable. Moreover, Conjecture \ref{con7.4}
extends to $ L = aS + S(a) FS^{-1}. $
(Note that \eqref{e7.16} can be rewritten as $ \frac{dL}{dt_0} = [(L^2)_{+}, L]$).
\end{remark}

\section{Classification of multiplicative PVA of order 3 in one variable.}
 
 \begin{theorem}
 \label{th8.1}
 Let $ \V $ be an algebra of difference functions in one variable $ u $, satisfying the assumptions of Theorem \ref{th2.1}. Then any multiplicative Poisson $ \la $-bracket on $ \V $ of order N=3 is either of general type, or is a linear combination of the multiplicative $\lambda$-bracket of complementary type
 $\{._\lambda .\}_{3,g,\epsilon}$
 (with non-zero coefficient) and the $\lambda$-bracket of general type
 $\{._\lambda .\}_{1,g}$. Explicitly, the latter is associated to functions $ g(u), F(u), G(u) \in \V $
 and constants $ a, c \in \cC, $ subject to the conditions
 \begin{equation}\label{e8.01}
 g(u) F'(u) = aF(u), \,g(u) G' (u) = a\epsilon G(u),\, \epsilon^2=-1,\, a\neq 0,
 \end{equation}
 and given by $ \{ u_\la u \} = \sum_{j=1}^{3} (\la^k - (\la S)^{-k}) f_k,$
 where 
 \begin{equation}\label{e8.02}
 \begin{aligned}
 f_1 & = g(u) g(u_1) (F(u) G(u_1) + c), \\
 f_2 & = g(u) g(u_2) (\epsilon F(u) G(u_1) +\epsilon^{-1} F(u_1) G(u_2)), \\
 f_3 & = g(u) g(u_3) F(u_1) G(u_2)\, . \\
 \end{aligned}
 \end{equation}
 \end{theorem}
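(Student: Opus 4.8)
The plan is to imitate, at order~$3$, the method used for Theorem~\ref{th2.1}. First I would invoke Lemma~\ref{lem2.3} and \eqref{e1.9} to reduce to the normal form
\begin{equation*}
\{u_\la u\}=\sum_{k=1}^{3}\bigl(\la^k-(\la S)^{-k}\bigr)f_k(u,u_1,\ldots,u_k),
\end{equation*}
and I would dispose of the case $f_3=0$ at once, as it is covered by Theorem~\ref{th2.1}; so assume $f_3\neq0$. Expanding the Jacobi identity \eqref{e2.03} for $a=b=c=u$ and reading off the coefficients of $\la^m\mu^n$ with $0<m\leq3$ and $m<n\leq m+3$ produces the $N^2=9$ equations to be solved, and by the skewsymmetry remark preceding Lemma~\ref{lem2.3} these exhaust the independent relations. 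They stratify by $m$: the coefficient of $\la^3\mu^6$ involves $f_3$ alone, three further coefficients ($\la^3\mu^5$, $\la^2\mu^5$ and $\la^2\mu^4$) couple only $f_2$ and $f_3$, and the remaining five bring in $f_1$.

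The second step reads the shape of $f_3$ off the top equation. A direct computation identifies the coefficient of $\la^3\mu^6$ as
\begin{equation*}
f_3\,S^3\frac{\ptl f_3}{\ptl u}=(S^3f_3)\frac{\ptl f_3}{\ptl u_3},
\end{equation*}
the order-$3$ analogue of \eqref{e2.04}. Dividing by $f_3\,S^3f_3$ as for \eqref{e2.08} gives $\frac{\ptl f_3}{\ptl u_3}/f_3=S^3\bigl(\frac{\ptl f_3}{\ptl u}/f_3\bigr)$; the left side depends on $u,\ldots,u_3$ and the right on $u_3,\ldots,u_6$, so both depend only on $u_3$. The logarithmic-separation reasoning leading to \eqref{e2.09}, which uses that $\cC=\cF$ is a field, then forces
\begin{equation*}
f_3=g(u)\,g(u_3)\,\Phi(u_1,u_2)
\end{equation*}
for some $g\in\V$ and some two-variable function $\Phi$, after absorbing a multiplicative constant.

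The heart of the argument, and the step I expect to be the main obstacle, is to factorize $\Phi$ and to extract the structure constants from the three remaining $f_1$-free equations. Substituting the form of $f_3$ into the $\la^2\mu^5$ equation and separating the variables $u_3,u_4$ from $u,u_1,u_2$, I would find that $g(u_1)\frac{\ptl\Phi}{\ptl u_1}/\Phi$ is a constant $a\in\cC$; as $\frac{\ptl}{\ptl u_1}\log\Phi$ is then a function of $u_1$ alone, $\Phi$ factorizes as $\Phi(u_1,u_2)=F(u_1)G(u_2)$, with $F$ satisfying the first relation $gF'=aF$ of \eqref{e8.01}. The $\la^3\mu^5$ and $\la^2\mu^4$ equations should then pin down
\begin{equation*}
f_2=g(u)g(u_2)\bigl(\epsilon F(u)G(u_1)+\epsilon^{-1}F(u_1)G(u_2)\bigr),
\end{equation*}
impose the companion relation $gG'=a\epsilon G$, and force the algebraic constraint $\epsilon^2=-1$ as the condition under which the two directions of integration for $f_2$ agree, exactly as the free constant $c$ appeared when equating \eqref{e2.13} and \eqref{e2.14}. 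I would then treat separately the degenerate subcase $a=0$: there $F'=G'=0$, so $F,G\in\cC$, $\Phi$ is constant, and the bracket is of general type, mirroring the split $h'(u)\neq0$ versus $h'(u)=0$ in Theorem~\ref{th2.1}.

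Finally, with $f_2$ and $f_3$ determined I would solve the five remaining equations for $f_1$. The $\la^3\mu^4$ equation fixes $f_1$ up to integration in $u$, and comparing this with the integration in $u_1$ furnished by a second equation leaves a single free constant $c\in\cC$, yielding
\begin{equation*}
f_1=g(u)g(u_1)\bigl(F(u)G(u_1)+c\bigr),
\end{equation*}
the constant $c$ being precisely the coefficient of the order~$1$ general type bracket $\{._\la.\}_{1,g}$. To close, I would verify directly that the triple $(f_1,f_2,f_3)$ satisfies all nine equations; in the nondegenerate branch this exhibits the bracket as a linear combination of the complementary type $\{._\la.\}_{3,g,\epsilon}$ and $\{._\la.\}_{1,g}$, completing the order~$3$ classification.
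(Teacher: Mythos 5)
Your setup follows the paper's strategy faithfully: the reduction via Lemma \ref{lem2.3} to the normal form, the nine coefficient equations and their stratification by how many $f_j$'s they involve, the separation-of-variables argument giving $f_3=g(u)g(u_3)F(u_1)G(u_2)$ with $g F'=aF$, $gG'=bG$, and the final split into $a\neq 0$ versus $a=0$ all match Theorem \ref{th8.1}'s actual proof. The genuine gap is in the central step, namely where the constraint $\epsilon^2=-1$ comes from. You claim it arises ``as the condition under which the two directions of integration for $f_2$ agree.'' It does not: integrating the $\la^3\mu^5$ equation in $u$ gives $f_2=\epsilon\, g(u)g(u_2)F(u)G(u_1)+g(u)B_1(u_1,u_2)$, integrating the $\la^2\mu^5$ equation in $u_2$ gives $f_2=\epsilon^{-1}g(u)g(u_2)F(u_1)G(u_2)+g(u_2)C_1(u,u_1)$, and equating the two is \emph{automatically} consistent; it merely yields $f_2=g(u)g(u_2)\bigl(\epsilon F(u)G(u_1)+\epsilon^{-1}F(u_1)G(u_2)+B(u_1)\bigr)$ with a free function $B(u_1)$ --- exactly as equating \eqref{e2.13} and \eqref{e2.14} in Theorem \ref{th2.1} produced a free constant $c$, not a constraint. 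So at the stage where you claim to fix $\epsilon$, no condition on $\epsilon$ has appeared at all.

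In the paper, $\epsilon^2=-1$ is extracted from the $\la^3\mu^4$ equation \eqref{e8.05}, which involves $f_1$ --- precisely the stratum you postpone to the very end: since $f_1=f_1(u,u_1)$, in \eqref{e8.24} the only term depending on $u_2$ is $\bigl(\tfrac{b^2}{a}+a\bigr)F(u_2)G(u_3)g(u_4)$, whence $(b/a)^2+1=0$. If instead you insist on using only the $f_1$-free equations, the one you have not yet used ($\la^2\mu^4$, i.e.\ \eqref{e8.07}) produces, after substituting the above $f_2$, an obstruction proportional to $(\epsilon-1)(\epsilon+\epsilon^{-1})$, so it forces $\epsilon^2=-1$ \emph{or} $\epsilon=1$. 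The branch $\epsilon=1$ (i.e.\ $a=b$, $F$ proportional to $G$) survives all four $f_1$-free equations \eqref{e8.03}, \eqref{e8.04}, \eqref{e8.06}, \eqref{e8.07}: for instance $g=F=G=u$, $B=0$, giving $f_3=uu_1u_2u_3$ and $f_2=uu_1u_2(u+u_2)$, satisfies all of them, yet by the theorem it cannot be completed by any $f_1$ to a Poisson bracket. Only the equations involving $f_1$ kill this branch, and your outline never confronts it. The repair is either to follow the paper and invoke \eqref{e8.05} immediately after integrating for $f_2$ (the $u_2$-dependence argument above), or to carry both the free function $B(u_1)$ and the branch $\epsilon=1$ through the five $f_1$-equations and show they force $B=0$ and $\epsilon\neq 1$.
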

\begin{proof}
  Recall that $ \{ u_\la u  \} $ is given by formula \eqref{e2.00}, where $ N = 4 $.
  The Jacobi identity M3 for $ a = b = c = u $ is formula \eqref{e2.03}, which is equivalent to the following nine identities (they are coefficients in \eqref{e2.03} of
  $ \la^3\mu^6$, $\la^3 \mu^5$, $\la^3 \mu^4$, $\la^2 \mu^5$, $\la^2 \mu^4$,
    $\la \mu^4$,   $\la \mu^3$,   $\la^2 \mu^3$, $\la \mu^2 $ respectively):
\begin{equation}\label{e8.03}
(S^3 f_3) \frac{\ptl f_3}{\ptl u_3} = f_3 S^3 \left( \frac{\ptl f_3}{\ptl u}\right),
\end{equation}
\begin{equation}\label{e8.04}
(S^3 f_2) \frac{\ptl f_3}{\ptl u_3} + (S^2f_3) \frac{\ptl f_3}{\ptl u_2} = f_3 S^3 \left( \frac{\ptl f_2}{\ptl u} \right),
\end{equation}
\begin{equation}\label{e8.05}
(S^3 f_1) \frac{\ptl f_3}{\ptl u_3} +(S^2 f_2) \frac{\ptl f_3}{\ptl u_2} + (Sf_3) \frac{\ptl f_3}{\ptl u_1} = f_3 S^3 \left( \frac{\ptl f_1}{\ptl u}  \right),
\end{equation}
\begin{equation}\label{e8.06}
(S^2 f_3) \frac{\ptl f_2}{\ptl u_2} = f_3 S^2 \left( \frac{\ptl f_3}{\ptl u_1} \right) + f_2 S^2 \left( \frac{\ptl f_3}{\ptl u} \right),
\end{equation}
\begin{equation}\label{e8.07}
(S^2 f_2) \frac{\ptl f_2}{\ptl u_2} + (Sf_3) \frac{\ptl f_2}{\ptl u_1} = f_3 S^2 \left( \frac{\ptl f_2}{\ptl u_1} \right) + f_2 S^2 \left(  \frac{\ptl f_2}{\ptl u} \right),
\end{equation}
\begin{equation}\label{e8.08}
(S f_3) \frac{\ptl f_1}{\ptl u_1} = f_3 S \left( \frac{\ptl f_3}{\ptl u_2} \right) + f_2 S \left( \frac{\ptl f_3}{\ptl u_1} \right) + f_1 S \left( \frac{\ptl f_3}{\ptl u} \right), 
\end{equation}
\begin{equation}\label{e8.09}
\begin{aligned}
& (S f_2) \frac{\ptl f_3}{\ptl u_3} + (Sf_1) \frac{\ptl f_3}{\ptl u_2} - f_1 \frac{\ptl f_3}{\ptl u} + (Sf_2) \frac{\ptl f_1}{\ptl u_1} + f_3 \frac{\ptl f_1}{\ptl u}\\ 
& = f_3 S \left( \frac{\ptl f_2}{\ptl u_2} \right) + f_2 S \left( \frac{\ptl f_2}{\ptl u_1}  \right) + f_1 S \left( \frac{\ptl f_2}{\ptl u}  \right),
\end{aligned}
\end{equation}
\begin{equation}\label{e8.10}
\begin{aligned}
& (S^2 f_1) \frac{\ptl f_3}{\ptl u_3} - (Sf_1) \frac{\ptl f_3}{\ptl u_1} - f_2 \frac{\ptl f_3}{\ptl u} + (S^2 f_1) \frac{\ptl f_2}{\ptl u_2} + (Sf_2) \frac{\ptl f_2}{\ptl u_1} + f_3 \frac{ \ptl f_2}{\ptl u} \\
& = f_3 S^2 \left( \frac{\ptl f_1}{\ptl u_1}  \right) + f_2 S^2 \left( \frac{\ptl f_1}{\ptl u} \right),
\end{aligned}
\end{equation}
\begin{equation}\label{e8.11}
(S f_1) \frac{\ptl f_1}{\ptl u_1} + (Sf_1) \frac{ \ptl f_2}{\ptl u_2} = f_1 \frac{\ptl f_2}{\ptl u} + f_1 S \left( \frac{\ptl f_1}{\ptl u}  \right) - f_2 \frac{ \ptl f_1}{\ptl u} + f_2 S \left( \frac{\ptl f_1}{\ptl u_1}  \right). 
\end{equation}
Equation \eqref{e8.03} can be rewritten as 
\[ \frac{1}{f_3} \frac{\ptl f_3}{\ptl u_3} = S^3 \left( \frac{1}{f_3} \frac{\ptl f_3}{\ptl u}  \right). \]
The LHS of this equation is a function of $ u, u_1, u_2, u_3, $ while the RHS is a function of $ u_3, u_4, u_5, u_6. $ Hence both sides are equal to $ \varphi (u_3),$
a function in $u_3$, so
\[ \frac{1}{f_3} \frac{\ptl f_3}{\ptl u_3} = \varphi (u_3), \quad \frac{1}{f_3} \frac{\ptl f_3}{\ptl u} = \varphi (u). \]
Hence $ \log f_3 = \int \varphi (u) du + \int \varphi (u_3) du_3 + (\text{function in } u_1, u_2), $ and, letting $ g(u) = \exp \int \varphi (u) du, $ we obtain for some $ A (u_1, u_2) \in \V: $
\begin{equation}\label{e8.13}
f_3 = g(u) g(u_3) A(u_1, u_2).
\end{equation}
Next, dividing both sides of equation \eqref{e8.04} by $ f_3, $ given by
\eqref{e8.13}, we obtain:
\begin{equation}\label{e8.14}
(S^3f_2) \frac{g' (u_3)}{g (u_3)} + \left( \frac{g(u_2)}{A (u_1, u_2)}  \frac{\ptl A (u_1, u_2)}{\ptl u_2}  \right) (A(u_3, u_4) g(u_5)) = S^3 \left( \frac{\ptl f_2}{\ptl u_2} \right)
\end{equation}
The first term and the second factor in the second term of the LHS, and the RHS are functions of $ u_3, u_4, u_5, $ while the first factor in the second term of the LHS is a function of $ u_1, u_2, $ hence it is a constant:
\begin{equation}\label{e8.15}
\frac{g(u_2)}{A (u_1, u_2)} \frac{\ptl A(u_1, u_2)}{\ptl u_2}= b \in \cC. 
\end{equation}
Hence $ \log A (u_1, u_2) = \int \frac{ b}{g(u_2)} du_2 + (\text{function in } u_1), $ and we conclude that
\begin{equation}\label{e8.16}
A(u_1, u_2) = G(u_2) F(u_1)
\end{equation}
for some functions $F(u)$ and $G(u)$ in $\V$.
Substituting \eqref{e8.16} in \eqref{e8.15}, we obtain 
\begin{equation}\label{e8.17}
g(u) G' (u) = b G(u).
\end{equation}
By \eqref{e8.13} and \eqref{e8.17} we obtain:
\begin{equation}\label{e8.18}
f_3 = g(u) g(u_3) F(u_1) G(u_2).
\end{equation}
Next. dividing equation \eqref{e8.06} by $ S^2 f_3 $ and substituting \eqref{e8.18}, we obtain
\begin{equation}\label{e8.19}
\frac{\ptl f_2}{\ptl u_2 } = g(u) F(u_1) G(u_2) \frac{g(u_3) F'(u_3)}{F(u_3)} + \frac{g' (u_2)}{g(u_2)} f_2, 
\end{equation} 
from which, as above, we conclude that
\begin{equation}\label{e8.20}
g(u) F'(u) = a F(u) \text{ for some } a \in \cC.
\end{equation}
Substituting \eqref{e8.20} in \eqref{e8.19}, we obtain
\begin{equation}\label{e8.21}
\frac{\ptl }{\ptl u_2} \left(  \frac{f_2}{g(u_2)} \right) = a g(u) F(u_1) \frac{G(u_2)}{g(u_2)} \, .
\end{equation}
Substituting \eqref{e8.15} and \eqref{e8.16} in \eqref{e8.14} and dividing both sides by $ g(u), $ we obtain
\begin{equation}\label{e8.22}
\frac{1}{g(u)} \frac{\ptl f_2}{\ptl u} - \frac{g' (u)}{g(u)^2} f_2 = b \frac{F(u)}{g(u)} G(u_1) g(u_2).
\end{equation}

First, consider the case $ a \neq 0. $ Then, using \eqref{e8.20}, equation \eqref{e8.22} can be rewritten as
\[ \frac{\ptl }{\ptl u} \left( \frac{f_2}{g(u)}\right) = \frac{b}{a} F' (u) G(u_1) g(u_2). \]
Hence we have for some $ B_1 (u_1, u_2) \in \V: $
\begin{equation}\label{e8.23}
f_2 = \frac{b}{a} g(u) g(u_2) F(u) G(u_1) + g(u) B_1 (u_1, u_2).
\end{equation}
Next, divide equation \eqref{e8.05} by $ f_3 $ and substitute \eqref{e8.17}, \eqref{e8.18}, and \eqref{e8.20} to obtain:
\begin{equation}\label{e8.24}
(S^3 f_1) \frac{g'(u_3)}{g(u_3)} + \left( \frac{b^2}{a} + a \right) F(u_2) G(u_3) g(u_4) + bB_1 (u_3, u_4) = S^3 \left( \frac{\ptl f_1}{\ptl u} \right).
\end{equation}
Since $ u_2 $ appears only in the second summand of \eqref{e8.24}, we conclude that 
\begin{equation}\label{e8.25}
\left( \frac{b}{a} \right)^2 + 1 = 0.
\end{equation}
Then \eqref{e8.24} becomes, after applying $ S^{-3}$:
\begin{equation}\label{e8.26}
\frac{g'(u)}{g(u)} f_1 + b B_1 (u, u_1) = \frac{ \ptl f_1}{\ptl u}.
\end{equation} 
Next, since $ b \neq 0 $ by \eqref{e8.25}, using \eqref{e8.17} and \eqref{e8.23}, we obtain
\[ \frac{\ptl }{\ptl u_2} \frac{B_1 (u_1, u_2)}{g(u_2)} = \frac{a}{b} F(u_1) G'(u_2),  \]
hence for some $ B(u)\in \V $ we obtain
\begin{equation}\label{e8.27}
B_1 (u_1, u_2) = g(u_2) \left( \frac{a}{b}  F(u_1) G(u_2) + B(u_1) \right),
\end{equation}
and, by \eqref{e8.23}, we have
\begin{equation}\label{e8.28}
f_2 = g(u) g(u_2) \left(  \frac{b}{a} F(u) G(u_1)  + \frac{a}{b} F(u_1) G(u_2) + B(u_1) \right).
\end{equation}
Next, dividing equation \eqref{e8.07} by $ G(u_2) G(u_3), $ we obtain, using \eqref{e8.18} and \eqref{e8.28},
\begin{equation}\label{e8.29}
\frac{a B(u_3) - B'(u_3)}{G(u_3)} = \frac{F(u_2)}{G(u_2)} \, \frac{b B(u_1)-B'(u_1)}{F(u_1)} \, .
\end{equation}
Since, by \eqref{e8.17}, \eqref{e8.20} and \eqref{e8.25}, $ \frac{F(u_2)}{G(u_2)} \notin \cC, $ we conclude from \eqref{e8.29} that $ aB(u) - B'(u)$ $=0= bB (u) - B'(u) , $ hence, by \eqref{e8.25}, we conclude that $ B(u) =0. $ Hence \eqref{e8.28} becomes
\begin{equation}\label{e8.30}
f_2 = g(u) g(u_2) \left(  \frac{b}{a} F(u) G(u_1) + \frac{a}{b} F(u_1) G(u_2)  \right).
\end{equation}
Using \eqref{e8.27} with $ B(u) = 0, $ equation \eqref{e8.26} becomes
\[ \frac{\ptl}{\ptl u} \left(  \frac{f_1}{g(u)}  \right) = a \frac{F(u)}{g(u)} G(u_1) g(u_1),  \]
hence, by \eqref{e8.20}, we obtain for some $ \phi (u) \in \V: $
\begin{equation}\label{e8.31}
f_1 = g(u) g(u_1) F(u)G(u_1) + g(u) \phi (u_1).
\end{equation}
Next, dividing both sides of \eqref{e8.08} by $ Sf_3 $, and using \eqref{e8.17}, \eqref{e8.18}, \eqref{e8.20}, \eqref{e8.25}, \eqref{e8.30} we obtain
\[ \frac{\ptl f_1}{\ptl u_1} = bg(u) F(u) G(u_1) + \frac{g'(u_1)}{g(u_1)} f_1, \]
hence, dividing by $ g(u_1), $ and using \eqref{e8.17}, we have 
\[ \frac{\ptl}{\ptl u_1} \frac{f_1}{g(u_1)} =  G'(u_1) g(u) F(u). \]
Hence for some $ \psi (u) \in \V  $ we have 
\[ \frac{f_1}{g(u_1)} = g(u) F(u) G(u_1) + \psi (u) \, . \]
Therefore, using \eqref{e8.31} we find that
\[ \frac{\psi (u)}{g(u)} = \frac{\phi (u_1)}{g(u_1)} \, . \]
Hence both sides are equal to a constant $ c, $ and $ \phi (u_1) = c g(u_1). $ So, from \eqref{e8.31} we obtain the first equation in \eqref{e8.02}. This completes the proof when $ a \neq 0. $

If $ a = 0, $ then, by \eqref{e8.20}, $ F(u) \in \cC, $ and we may assume, without loss of generality that $ F(u) = 1.  $ Then \eqref{e8.21} gives for some $ C_1 (u, u_1) \in \V $
\begin{equation}\label{e8.33}
f_2 = g(u_2) C_1 (u, u_1).
\end{equation} 
Using this, equation \eqref{e8.22} can be rewritten as \[ \frac{\ptl}{\ptl u} \frac{C_1 (u,u_1)}{g(u)} = b \frac{G(u_1)}{g(u)}, \]
hence for some $ C(u)\in \V $ we have
\[ C_1(u,u_1) = bg(u) \Phi(u) G(u_1) + g(u)C(u_1), \text{ where } \Phi (u) = \int \frac{du}{g(u)}. \]
Hence, \eqref{e8.33} becomes 
\begin{equation}\label{e8.34}
f_2 = g(u) g(u_2) (b \Phi (u) G(u_1) + C(u_1)   ). 
\end{equation}
Recall that, by \eqref{e8.18}, we have
\begin{equation}\label{e8.35}
f_3 = g(u) g(u_3) G(u_2).
\end{equation}
Note that $ G(u) \neq 0,  $ since $ N = 3.  $ Using \eqref{e8.34} and \eqref{e8.35}, equation \eqref{e8.07} becomes, after dividing by $ G(u_3) $
\[g(u_1) C'(u_1) - b C(u_1) = \frac{G(u_2)}{G(u_3)} (b^2 \Phi (u_2) G(u_3) + g(u_3) C' (u_3) ) \, .\]
Since the LHS is a function in $ u_1 $ and the RHS is a function of $ u_2, u_3, $ we conclude that both sides are equal to a constant $ c_1, $ in particular
  \[\frac{G(u_2)}{G(u_3)} ( b^2 \Phi (u_2) G(u_3) + g(u_3) C' (u_3)    ) = c_1.\]
This equation can be rewritten as 
\[b^2 \Phi (u_2) - \frac{c_1}{G(u_2)} = - \frac{g(u_3) C'(u_3)}{G(u_3)} \, .\]
Hence each side is a constant.
Hence $ b^2 \Phi (u_2) - \frac{c_1}{G(u_2)} $ is a constant, and applying it to $ \frac{\ptl }{\ptl u_2}, $ we obtain $ \frac{b^2}{g(u_2)} - c_1 \frac{G' (u_2)}{G(u_2)^2} = 0, $ which, by \eqref{e8.17} is equivalent to $ c_1b G(u_2) = b^2 G(u_2)^2. $ Hence $ G(u) \in \cC, $ and, by \eqref{e8.17}, $ b = 0. $ Without loss of generality we may assume that $ G(u) = 1. $ Thus, \eqref{e8.34} and \eqref{e8.35} become:
\begin{equation}\label{e8.39}
f_2 = g(u) g(u_2) C(u_1), \quad f_3 = g(u) g(u_3).
\end{equation}
Next, since $ \frac{\ptl f_3}{\ptl u_1} = \frac{\ptl f_3}{\ptl u_2} = 0, $ equations \eqref{e8.05} and \eqref{e8.08} become:
\[ (S^3 f_1) \frac{\ptl f_3}{\ptl u_3} = f_3 S^3 \left( \frac{\ptl f_1}{\ptl u} \right), \quad (Sf_3) \frac{\ptl f_1}{\ptl u_1} = f_1 S \left( \frac{\ptl f_3}{\ptl u} \right)\, . \]
Using \eqref{e8.39}, this becomes
\[ \frac{\ptl }{\ptl u} \left( \frac{f_1}{g(u)}  \right) = 0 = \frac{\ptl }{\ptl u_1} \left( \frac{f_1}{g(u_1)} \right) \, . \]
It follows that
\begin{equation}\label{e8.40}
f_1 = \beta g(u) g(u_1) \text{ for some } \beta \in \cC. 
\end{equation}
Next, equation \eqref{e8.10} gives, using \eqref{e8.39} and \eqref{e8.40}: $ (Sf_2) \frac{\ptl f_2}{\ptl u_1} = 0. $ Hence $ \frac{\ptl f_2}{\ptl u_1} = 0,  $ and, by \eqref{e8.39}, $ C(u) = \gamma \in \cC. $ Therefore $ f_2 = \gamma g(u) g(u_2), $ and due to \eqref{e8.39} and \eqref{e8.40}, we see that the multiplicative Poisson $ \la $-bracket in question is of general type. 
\end{proof}


\begin{remark}\label{rem8.9}
The coefficients of $ \la^{N} \mu^{N+j}, 1 \leq j \leq N, $ in the Jacobi identity \eqref{e2.03} for the multiplicative $ \la $-bracket \eqref{e1.9} of order $ N $ produces the following identities:
\begin{equation}\label{e8.36}
\sum_{k=j}^{N} S^{N+j-k} (f_k) \frac{\ptl f_N}{\ptl u_{N+j-k}} = f_N S^N \left( \frac{\ptl f_j}{\ptl u} \right), \ j= 1, \ldots, N.
\end{equation}
Using these equations and applying arguments similar to that proof of Theorems \ref{th2.1} and \ref{th8.1}, we obtain for $ \V $ as in Theorems \ref{th2.1}, \ref{th8.1}:
\begin{equation}\label{e8.37}
f_N = g(u) g(u_N) \prod_{i = 1}^{N-1} F_i (u_i), 
\end{equation}
where $ g(u), F_i(u) \in \V $ satisfy the relations
\begin{equation}\label{e8.38}
g(u) F'_i (u) = a_i F_i (u) \text{ for some } a_i \in \cC, \ i = 1, \ldots, N-1.
\end{equation}
However, to compute the $ f_j $ for $ 1 \leq j \leq N-1 $ is more difficult. 
\end{remark}

\section{Classification of multiplicative PVA of order 4 in one variable}

\begin{theorem}\label{th10.1}
Let $ \V $ be an algebra of difference functions in one variable $ u, $ satisfying the assumptions of Theorem \ref{th2.1}. Then any multiplicative Poisson $ \la $-bracket on $ \V $ of order 4 is one of the types (i), (ii), (iv), (v) or (vi) (see the introduction).
\end{theorem}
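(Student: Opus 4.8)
The plan is to run, at order $N=4$, the same elimination that settled the cases $N=2,3$ in Theorems \ref{th2.1} and \ref{th8.1}. Writing $\{u_\la u\}=\sum_{k=1}^{4}(\la^k-(\la S)^{-k})f_k$ as in \eqref{e2.00}, the Jacobi identity \eqref{e2.03} for $a=b=c=u$ is equivalent to the $16$ equations obtained as coefficients of $\la^m\mu^n$ with $0<m\le 4$ and $m<n\le m+4$. I would organize them by decreasing power of $\la$ and peel off $f_4,f_3,f_2,f_1$ in that order. The four coefficients of $\la^4\mu^{4+j}$, $j=1,\dots,4$, are precisely the identities \eqref{e8.36} of Remark \ref{rem8.9}, and as recorded there they already force
\[
f_4=g(u)\,g(u_4)\,F_1(u_1)F_2(u_2)F_3(u_3),\qquad g(u)F_i'(u)=a_iF_i(u),\quad a_i\in\cC,
\]
cf. \eqref{e8.37}--\eqref{e8.38}. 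This fixes $f_4$ up to the functions $g,F_1,F_2,F_3$ and reduces the theorem to computing $f_3,f_2,f_1$ together with the admissible constants $a_1,a_2,a_3$.

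Next, exactly as equations \eqref{e8.04}--\eqref{e8.08} produced $f_3$ and $f_2$ from the known $f_3$ in the order-$3$ case, I would substitute $f_4$ into the coefficients of $\la^3\mu^n$, then $\la^2\mu^n$, then $\la\mu^n$ (highest $n$ first). Each such coefficient is a first-order linear equation in the unknown $f_k$ whose general solution is an explicit product of $g$'s and $F_i$'s plus an integration function of fewer variables: the $\la^3$-coefficients give $f_3=g(u)g(u_3)(\text{bilinear in the }F_i)+g(u)g(u_3)B_3(u_1,u_2)$, the $\la^2$-coefficients give the analogous form for $f_2$ with an integration function $B_2$, and the $\la$-coefficients give $f_1$ with $B_1$. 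The coefficients of \eqref{e2.03} not yet used then act as \emph{compatibility equations}: separating variables in each, as in the passage from \eqref{e8.29} to $B\equiv 0$ in Theorem \ref{th8.1}, pins down the $B_k$ and, crucially, forces algebraic relations among $a_1,a_2,a_3$ together with a possible deformation constant $c\in\cC$.

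These relations split into finitely many cases matching the theorem. When $a_1=a_2=a_3=0$ all $F_i$ are constants and one recovers the general type (i). The geometric relation $a_i=\epsilon^{i-1}a$ with $a\neq 0$ and $\epsilon^{3}=-1$ (forced just as \eqref{e8.25} forced $(b/a)^2=-1$ in the order-$3$ case) yields the complementary bracket $\{._\la.\}_{4,g,\epsilon}$, i.e.\ type (ii); its deformations divide according to the root: for $\epsilon\neq-1$ the constant $c$ survives and gives the $c$-deformed bracket, type (vi), while for $\epsilon=-1$ the deformation is carried by a compatible order-$2$ complementary bracket $\{._\la.\}_{2,g,-1}$, giving type (iv). Finally, the degeneration $a_1=a_3=0$, $a_2\neq0$ makes the odd-index factors constant, decouples the even and odd sublattices (forcing $f_1=f_3=0$), and reproduces the $2$-stretched combination, type (v); the compatibility equations must be shown to rule out every remaining sign pattern for $(a_1,a_2,a_3)$, so that types (iii), (vii), (viii), which have order $\neq4$, do not arise. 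I would close by verifying sufficiency: by Proposition \ref{prop2.1} it suffices to check \eqref{e2.03} for $a=b=c=u$, which holds for each surviving candidate by construction.

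The genuine difficulty here is not conceptual but combinatorial: managing the full $16$-equation system and the branching case analysis, in particular cleanly separating the subcases $\epsilon=-1$, $\epsilon$ a primitive cube root of $-1$, and the degenerations $a_i=0$ that merge into the general type or stretch into type (v), all while correctly tracking the lower-order integration functions $B_1,B_2,B_3$ that carry the deformation parameter $c$. This is exactly the computational explosion that makes order $4$ (and \emph{a fortiori} order $5$) substantially heavier than the order-$3$ analysis carried out in full in Theorem \ref{th8.1}.
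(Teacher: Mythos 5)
Your proposal follows essentially the same route as the paper's proof: it extracts $f_4$ from the $\la^4\mu^{4+j}$ coefficients via Remark \ref{rem8.9}, peels off $f_3,f_2,f_1$ as solutions of first-order equations with integration functions of fewer variables, and uses the remaining coefficients of \eqref{e2.03} as compatibility constraints forcing the same case split on $(a_1,a_2,a_3)$ that the paper uses --- all $a_i=0$ giving type (i), $a_i=\epsilon^{i-1}a_1$ with $\epsilon^3=-1$ splitting into types (vi) for $\epsilon\neq-1$ and (iv) for $\epsilon=-1$, and $a_1=a_3=0\neq a_2$ giving the 2-stretched type (v). The only difference is that you leave the 16-equation bookkeeping (including ruling out the pattern $a_1=0$, $a_3\neq 0$) unexecuted, which is precisely the computational content of the paper's proof rather than a difference in method.
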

\begin{proof}
  Recall that the multiplicative $ \la $-bracket on $ \V $ is determined by $ \{ u_\la u \}, $ given by \eqref{e2.00}, where $N=4$.
  The condition on the functions $ f_j $ that one has to check is \eqref{e2.03}, and \eqref{e2.03} is equivalent to 16 equations, which are the coefficients of $ \la^m \mu^n $ with $ 1 \leq m \leq 4, m+1 \leq n \leq m + 4. $ By Remark \ref{rem8.9}, the four of these equations, corresponding to $ \la^4 \mu^{4 + j}, \ j = 1, \ldots, 4 $, give the following expression for $ f_4: $
\begin{equation}\label{e10.01}
f_4 = g(u)g(u_4) \Phi (u_1, u_2, u_3),
\end{equation}
where 
\begin{equation}\label{e10.02}
\Phi (u_1, u_2, u_3) = F_1 (u_1) F_2 (u_2) F_3 (u_3),
\end{equation}
\begin{equation}\label{e10.03}
g(u) F'_i (u) = a_i F_i(u) \text{ for some } a_i \in \mathcal{C}, \ i = 1,2,3.
\end{equation}
This expression for $ f_4 $ implies equation \eqref{e8.36} for $N= j = 4,  $ however one still has to study the remaining 15 equations. For this purpose we let
\begin{equation}\label{e10.04}
f_j (u, \ldots, u_j) = g(u) g(u_j) h_j (u, \ldots, u_j).
\end{equation}
By \eqref{e10.01} and \eqref{e10.03}, we have
\begin{equation}\label{e10.05}
h_4 = \Phi (u_1, u_2, u_3),
\end{equation}
\begin{equation}\label{e10.06}
g(u_i) \frac{\ptl h_4}{\ptl u_i} = a_i \Phi (u_1, u_2, u_3), \ i = 1,2,3.
\end{equation}
Then equation \eqref{e8.36} for $ N = 4, j = 3 $, becomes, using \eqref{e10.06} for $ i = 3 $:
\begin{equation}\label{e10.4/7}\tag{9.4/7}
	 a_3 S^3 (h_4) =  S^4 \left( g(u) \frac{\ptl h_3}{\ptl u} \right). 
\end{equation}
It follows that, applying $ S^{-3} $ and using \eqref{e10.03}, \eqref{e10.05}. we have
\begin{equation}\label{e10.07}
a_1 \frac{\ptl h_3}{\ptl u} = a_3 F'_1 (u) F_2 (u_1) F_3 (u_2).
\end{equation}

First, consider Case I: $ a_1 \neq 0. $ Then using \eqref{e10.07} and \eqref{e10.02} we obtain
\[ \frac{\ptl h_3}{\ptl u} = \frac{a_3}{a_1} F'_1 (u) F_2 (u_1) F_3 (u_2). \]
Hence we have for some $ A_1 (u_1, u_2, u_3) \in \V: $
\begin{equation}\label{e10.08}
h_3 = \frac{a_3}{a_1} \Phi (u, u_1, u_2) + A_1 (u_1, u_2, u_3).
\end{equation}

Next, equation \eqref{e8.36} for $ N=4, j=2, $ after the substitution \eqref{e10.04} and using \eqref{e10.06}, becomes
\begin{equation}\label{e10.4/6}\tag{9.4/6}
 S^4 \left( g(u) \frac{\ptl h_2}{\ptl u}  \right) = a_3 S^3 (h_3) + a_2 S^2 (h_4). 
\end{equation}
Applying to this equation $ S^{-3} $ and substituting \eqref{e10.08}, we obtain:
\[ S \left( g(u) \frac{\ptl h_2}{\ptl u} \right) = \left( \frac{a_3^2}{a_1} + a_2 \right) \Phi (u,u_1, u_2) + a_3 A_1 (u_1, u_2, u_3).    \]
The LHS of this equation is a function of $ u_1, u_2, u_3, $ but $ \frac{\ptl \Phi}{\ptl u}(u, u_1, u_2) \neq 0, $ hence
\begin{equation}\label{e10.09}
a^2_3 + a_1 a_2 = 0,
\end{equation}
and 
\begin{equation}\label{e10.10}
g(u) \frac{\ptl h_2}{\ptl u} = a_3 A_1 (u, u_1, u_2). 
\end{equation}

Next, equation \eqref{e8.36} for $ N = 4, j =1, $ using \eqref{e10.04} and \eqref{e10.06}, becomes
\begin{equation}\label{e10.4/5}\tag{9.4/5}
 S^4 \left( g(u) \frac{\ptl h_1}{\ptl u}   \right) = a_3 S^3 (h_2) + a_2 S^2 (h_3) + a_1 S(h_4). 
\end{equation}
Substituting $ h_3 $ and $ h_4, $ given by \eqref{e10.08} and \eqref{e10.05}, we obtain:
\[ S^2 \left( g(u) \frac{\ptl h_1}{\ptl u}  \right) = a_3 S (h_2) + \frac{a_2a_3 +a^2_1}{a_1} \Phi (u, u_1, u_2) + A_1 (u_1, u_2, u_3). \]
Since $ \Phi (u. u_1, u_2) $ depends on $ u $ and all other summands do not, we obtain 
\begin{equation}\label{e10.11}
a_2 a_3 + a^2_1 = 0,
\end{equation}
and 
\begin{equation}\label{e10.12}
S \left( g(u) \frac{\ptl h_1}{\ptl u}\right) = a_3 h_2 + a_2 A_1 (u, u_1, u_2). 
\end{equation}
It follows from \eqref{e10.09} and \eqref{e10.11} that 
\begin{equation}\label{e10.13}
a_j = \epsilon^{j-1} a_1, \text{ where } \epsilon^3 = -1
\end{equation}
Without loss of generality we may assume that $ a_1 = 1. $

We will use the equations on $h_1, h_2, h_3, h_4$, obtained by the substitution
\eqref{e10.04} in the coefficients of $\la^m \mu^n$ in \eqref{e2.03} for $N=4$; as before,
such an equation will be denoted by (9.m/n). First we use the following two of these equations:
\begin{equation}\label{e10.14}\tag{9.3/7}
S^3 (h_4) g(u_3) \frac{\ptl h_3}{\ptl u_3} = h_4 S^3 \left( g(u_1) \frac{\ptl h_4}{\ptl u_1} \right),
\end{equation}
\begin{equation}\label{e10.15}\tag{9.3/6}
S^3 (h_3) g(u_3) \frac{\ptl h_3}{\ptl u_3} + S^2 (h_4) g(u_2) \frac{\ptl h_3}{\ptl u_2} = h_4 S^3 \left(  g(u_1) \frac{\ptl h_3}{\ptl u_1} \right) + h_3 S^3 \left( g(u) \frac{\ptl h_3}{\ptl u} \right).
\end{equation}
Inserting \eqref{e10.08} in \eqref{e10.14} and using \eqref{e10.03} and \eqref{e10.13}, we obtain for some $ A_2 (u_1, u_2) \in \V: $
\begin{equation}\label{e10.16}
A_1 (u_1, u_2, u_3) = \epsilon^{-2} \Phi (u_1, u_2, u_3) + A_2 (u_1, u_2).
\end{equation}
Substituting this in \eqref{e10.08}, we have
\begin{equation}\label{e10.17}
h_3 = \epsilon^2 \Phi (u, u_1, u_2) + \epsilon^{-2} \Phi (u_1, u_2, u_3) + A_2 (u_1, u_2).
\end{equation}
Equation \eqref{e10.15} along with \eqref{e10.13} then gives, after some rearrangement, the following equation
\begin{equation}\label{e10.18}
\frac{A_2 (u_4, u_5) - g(u_4) \frac{\ptl A_2 (u_4, u_5)}{\ptl u_4}}{F_2 (u_4) F_3 (u_5)} = \frac{F_1 (u_3)}{F_3 (u_3)} \frac{\epsilon^2 A_2 (u_1, u_2) - g(u_2) \frac{\ptl A_2 (u_1, u_2)}{\ptl u_2}}{F_1 (u_1) F_2 (u_2)} \, .
\end{equation}
The LHS of this equation is a function of $ u_4, u_5, $ while the RHS is a product of a function, depending non-trivially on $ u_3 $ if $ \epsilon \neq -1 $ (resp. constant if $ \epsilon = -1 $) and a function of $ u_1, u_2. $ Hence
\begin{equation}\label{e10.19}
\text{both sides of \eqref{e10.18} are } 0 \, (\text{resp. } \in \cC) \text{ if } \epsilon \neq -1 \,(\text{resp. } \epsilon = -1).
\end{equation}
Consider separately two subcases of Case I, Case Ia: $ \epsilon \neq -1 $ and Case Ib: $ \epsilon = -1. $ 

By \eqref{e10.19} we have in Case Ia:
\[ \frac{\ptl}{\ptl u_j} \log A_2 (u_1, u_2) = \frac{\epsilon^{2(j-1)} }{g(u_j)} \text{ for } j = 1,2. \]
By \eqref{e10.03} and \eqref{e10.13} it follows that
\[ A_2 (u_1, u_2) = cF_1 (u_1) F_3 (u_2), \text{ for some } c \in \cC. \]
Substituting this in \eqref{e10.16}, \eqref{e10.17}, we obtain:
\begin{equation}\label{e10.20}
A_1 (u_1, u_2, u_3) = \epsilon^{-2} \Phi (u_1, u_2, u_3) + cF_1 (u_1) F_3 (u_2),
\end{equation}
\begin{equation}\label{e10.21}
h_3 = \epsilon^2 \Phi (u, u_1, u_2) + \epsilon^{-2} \Phi (u_1, u_2, u_3) + cF_1 (u_1) F_3(u_2). 
\end{equation}
By \eqref{e10.20} and \eqref{e10.10}, and using \eqref{e10.03}, we obtain for some $ B_1 (u_1, u_2) \in \V: $
\begin{equation}\label{e10.22}
h_2 = \Phi (u, u_1, u_2) + c \epsilon^2 F_1 (u) F_3 (u_1) + B_1 (u_1, u_2). 
\end{equation}
Substituting \eqref{e10.22} and \eqref{e10.20} in \eqref{e10.12} and using \eqref{e10.13}, we obtain
\begin{equation}\label{e10.23}
g(u) \frac{\ptl h_1}{\ptl u} = a_3 B_1 (u, u_1).
\end{equation}

Next, we consider the equation
\begin{equation}\label{e10.2/6}\tag{9.2/6}
S^2 (h_4) g(u_2) \frac{\ptl h_2}{\ptl u_2} = h_4 S^2 \left( g(u_2) \frac{\ptl h_4}{\ptl u_2} \right) + h_3 S^2 \left( g(u_1) \frac{\ptl h_4}{\ptl u_1}  \right).
\end{equation}
Using \eqref{e10.17}, \eqref{e10.22} and \eqref{e10.03} this gives for some $ B(u) \in \V: $
\begin{equation}\label{e10.24}
B_1 (u_1, u_2) = c\epsilon^{-2} F_1 (u_1) F_3 (u_2) + B(u_1).
\end{equation}
Substituting this in \eqref{e10.22} and \eqref{e10.23} we get
\begin{equation}\label{e10.25}
h_2 = \Phi (u, u_1, u_2) + c (\epsilon^2 F_1 (u) F_3 (u_1) + \epsilon^{-2} F_1 (u_1) F_3 (u_2)   ) + B(u_1),
\end{equation} 
\begin{equation}\label{e10.26}
g(u) \frac{\ptl h_1}{\ptl u} = cF_1 (u) F_3 (u_1) + \epsilon^2 B(u).
\end{equation}
Since in Case Ia we have $ 1 + \epsilon^2 = \epsilon, $ by \eqref{e10.03} and \eqref{e10.13}, we can choose the $ F_i, $ such that $ F_2 (u) = F_1 (u) F_3 (u). $ Then the following identities hold:
\begin{equation}\label{e10.27}
\Phi (u, u_1, u_2) F_1 (u_2) F_3 (u_3) = \Phi (u_1, u_2, u_3) F_1 (u) F_3 (u_1),  
\end{equation}
\begin{equation}\label{e10.28}
F_1 (u) F_3 (u_1) F_1 (u_1) F_3 (u_2) = \Phi (u, u_1, u_2).
\end{equation}

Next, we consider the equation
\begin{equation}\label{e10.29}\tag{9.3/5}
\begin{aligned}
&S^3 (h_2) g(u_3) \frac{\ptl h_3}{\ptl u_3} + S^2 (h_3) g(u_2) \frac{\ptl h_3}{\ptl u_2}  + S(h_4) g(u_1) \frac{\ptl h_3}{\ptl u_1} \\
& = h_4 S^3 \left( g(u_1) \frac{\ptl h_2}{\ptl u_1} \right) + h_3 S^3 \left( g(u) \frac{\ptl h_2}{\ptl u} \right).  
\end{aligned}
\end{equation}
Substituting in this equation \eqref{e10.21} and \eqref{e10.25}, we obtain, by making use of \eqref{e10.27}: $ B(u_4) = g(u_4) B' (u_4).  $
It follows that $ B(u) = b F_1 (u),  $ for some $ b \in \cC. $ Substituting this in \eqref{e10.24}, \eqref{e10.25}  and \eqref{e10.26}, we obtain
\begin{equation}\label{e10.30}
B_1 (u_1, u_2) = c \epsilon^{-2} F_1 (u_1) F_3 (u_2) + b F_1 (u_1),
\end{equation} 
\begin{equation}\label{e10.31}
h_2 = \Phi (u, u_1, u_2) + c (\epsilon^2 F_1(u) F_3 (u_1) + \epsilon^{-2} F_1 (u_1) F_3 (u_2)  ) + b F_1 (u_1),
\end{equation}
\begin{equation}\label{e10.32}
g(u) \frac{\ptl h_1}{\ptl u} = c F_1 (u_1) F_3 (u_1) + b \epsilon^2 F_1 (u).
\end{equation}
Dividing \eqref{e10.32} by $ g(u)  $ and   using \eqref{e10.03} and \eqref{e10.13}, we obtain for some $ P(u) \in \V: $
\begin{equation}\label{e10.33}
h_1 = cF_1 (u) F_3 (u_1) + b \epsilon^2 F_1 (u) + P(u_1).
\end{equation}

Next, we consider the equation, coming from the coefficient of $ \la \mu^5, $ which, using that $ g(u_j) \frac{\ptl h_4}{\ptl u_j} = a_j h_4, \ j = 1,2,3 $ (see \eqref{e10.02}--\eqref{e10.04}), and dividing by $ S(h_4),  $ becomes:
 \begin{equation}\label{e10.1/5}\tag{9.1/5}
 g(u_1) \frac{\ptl h_1}{\ptl u_1} = \epsilon^2 h_4 + \epsilon h_3 + h_2. 
 \end{equation}
Using \eqref{e10.21} and \eqref{e10.31}, this can be rewritten as
\[ g(u_1) \frac{\ptl h_1}{\ptl u_1} = c \epsilon^2 F_1 (u) F_3 (u_1) + bF_1 (u_1). \]
Using \eqref{e10.33}, this becomes: $ g(u_1) P'(u_1) = bF_1 (u_1),  $ which, using \eqref{e10.03}, becomes $ P' (u_1) = bF'_1 (u_1). $ Hence $ P(u_1) = bF_1 (u_1) + \al $ for some $ \al \in \cC. $
Substituting this in \eqref{e10.33}, we obtain 
\begin{equation}\label{e10.34}
h_1 = cF_1 (u) F_3 (u_1) + b \epsilon^2 F_1 (u) + b F_1 (u_1) + \al . 
\end{equation}

Next, we consider the equation, coming from the coefficient of $ \la^2 \mu^5, $ which is a differential equation on $ h_1, h_3 $ and $ h_2. $ Using the expressions \eqref{e10.05}, \eqref{e10.21} and \eqref{e10.31} of this functions and making use of \eqref{e10.27}, after some calculations, this equation becomes 
\[ b(1-\epsilon^2) \Phi (u_2, u_3, u_4) F_1 (u_1) = 0. \]
Since we are in Case Ia, it follows that $ b = 0. $ Hence, by \eqref{e10.34}, we have
\begin{equation}\label{e10.35}
h_1 = cF_1 (u) F_3 (u_1) + \al.
\end{equation}

In the same way, we can see that the coefficient of $ \la \mu^4 $ leads to the following equation:
\[ (c^2 + \al) (\epsilon - \epsilon^2) \Phi (u_1, u_2, u_3) = 0. \]
It follows that $ \al = -c^2. $ Hence the expressions \eqref{e10.05}, \eqref{e10.21}, \eqref{e10.31}, and \eqref{e10.35}, show that Case Ia produces type (vi) from the introduction of order 4 multiplicative PVA. Note that all the remaining five equations on the $ h_j $ are equivalent to the equations $ \al = -c^2. $

Now we turn to the Case Ib: $ a_1 \neq 0, \epsilon = -1. $ Again, we may assume that $ a_1=1, $ so that
\begin{equation}\label{e10.36}
a_j = (-1)^{j-1} \text{ for } j = 1,2,3.
\end{equation}
Hence we may assume that
\begin{equation}\label{e10.37}
F_1 (u) = F_3 (u) = F_2 (u)^{-1}.
\end{equation}

Recall that, by \eqref{e10.19}, both sides of \eqref{e10.18} are constant, which we denote by $ 2b_1. $ This gives the following equations:
\[ \frac{\ptl }{\ptl u_i} ( F_2 (u_1) F_2 (u_2) A_2 (u_1, u_2) ) = b_1 \frac{\ptl}{\ptl u_i} F_2 (u_i)^2, \ i = 1,2. \]
Hence we have for some $ b_2 \in \cC: $
\[ F_2 (u_1) F_2 (u_2) A_2 (u_1, u_2) = b_1 (F_2 (u_1)^2 + F_2 (u_2)^2) + b_2. \]
Using \eqref{e10.37}, we obtain from this
\begin{equation}\label{e10.38}
A_2 (u_1, u_2) = b_1 ( F_1 (u_2) F_2 (u_1) + F_1 (u_1) F_2 (u_2)   ) + b_2 F_1 (u_1) F_1 (u_2). 
\end{equation}
Substituting \eqref{e10.38} in \eqref{e10.16} and \eqref{e10.17} and using \eqref{e10.37}, we obtain 
\begin{equation}\label{e10.39}
A_1 (u_1, u_2, u_3) = \Phi (u_1, u_2, u_3) + b_1 (F_1(u_1) F_2 (u_2) + F_2 (u_1) F_1 (u_2) )+b_2F_1(u_1)F_1(u_2),
\end{equation}
\begin{equation}\label{e10.40}
\begin{aligned}
h_3  = & \Phi (u, u_1, u_2) + \Phi (u_1, u_2, u_3) + b_1 (F_1 (u_1) F_2 (u_2)\\
& + F_2 (u_1) F_1 (u_2)  ) + b_2 F_1 (u_1) F_1 (u_2). \\
\end{aligned}
\end{equation}
Substituting \eqref{e10.39} in \eqref{e10.10}, and using \eqref{e10.36}, we obtain, after integrating by $ u, $ for some $ B_1 (u_1, u_2): $
\begin{equation}\label{e10.41}
\begin{aligned}
h_2 = & F_1 (u) F_2 (u_1) F_3 (u_2) + b_1 (F_1(u) F_2 (u_1)- F_2 (u) F_1 (u_1)  )  + b_2 F_1 (u) F_1 (u_1)  \\
& + B_1 (u_1, u_2). \\
\end{aligned}
\end{equation}
Substituting \eqref{e10.39} and \eqref{e10.41} in \eqref{e10.12} we obtain
\begin{equation}\label{e10.42}
S \left( g(u) \frac{\ptl h_1}{\ptl u}  \right) = - 2 b_1 F_2 (u) F_1 (u_1) + B_1 (u_1, u_2).
\end{equation}
Since only the first term in the RHS depends on $ u, $ we deduce that $ b_1 = 0. $ Thus, formulas \eqref{e10.39}, \eqref{e10.40}, \eqref{e10.41}, and \eqref{e10.42} become:
\begin{equation}\label{e10.43}
A_1 (u_1, u_2, u_3) = \Phi (u_1, u_2, u_3) + b_2 F_1 (u_1) F_1 (u_2),
\end{equation}
\begin{equation}\label{e10.44}
h_3 = \Phi (u, u_1, u_2) + \Phi (u_1, u_2, u_3) + b_2 F_1 (u_1) F_1 (u_2)
\end{equation}
\begin{equation}\label{e10.45}
h_2 = \Phi (u, u_1, u_2) + b_2 F_1 (u) F_1 (u_1) + B_1 (u_1, u_2), 
\end{equation}
\begin{equation}\label{e10.46}
g(u) \frac{\ptl h_1}{\ptl u} =  B_1 (u, u_1).
\end{equation}

Now we return to equation     \eqref{e10.2/6}       .  Using that $ g(u_j) \frac{\ptl h_4}{\ptl u_j} = a_j h_4 $ for $ j = 1,2 $ (see \eqref{e10.05}, \eqref{e10.06}), and canceling this equation by $ S^2 (h_4),  $ we obtain, using \eqref{e10.36}:
\[ g(u_2) \frac{\ptl h_2}{\ptl u_2} = -h_4 + h_3. \]
Substituting in this equation \eqref{e10.05}, \eqref{e10.44} and \eqref{e10.45}, and using \eqref{e10.03} and \eqref{e10.37}, we obtain for some $ B(u) \in \V: $
\begin{equation}\label{e10.47}
B_1 (u_1, u_2) = b_2 F_1 (u_1) F_1 (u_2) + B(u_1).
\end{equation}
Using this, equations \eqref{e10.45} and \eqref{e10.46} can be written as follows:
\begin{equation}\label{e10.48}
h_2 = \Phi (u, u_1, u_2) + b_2 \left( F_1 (u) F_1 (u_1) + F_1 (u_1) F_1 (u_2) \right) + B(u_1),
\end{equation}
\begin{equation}\label{e10.49}
g(u) \frac{\ptl h_1}{\ptl u} = b_2 F_1 (u) F_1 (u_1) + B(u).
\end{equation}

Next, returning to equation    \eqref{e10.15}     , we
obtain, using 
\eqref{e10.05}, \eqref{e10.06}, \eqref{e10.44}, \eqref{e10.48}:
\[ 
\begin{aligned}
& -2b_2 \left( \Phi (u_2, u_3, u_4) F_1 (u_1) F_1 (u_2) - \Phi (u_1, u_2, u_3) F_1 (u_3) F_1 (u_4)   \right) \\
& = \Phi (u_1, u_2, u_3) \left( B(u_4) - g(u_4) B' (u_4) \right).
\end{aligned} \]
By \eqref{e10.02} and \eqref{e10.37} after multiplying by $ F_1 (u_2) F_1 (u_3)  $ this can be rewritten as
\[ 
\begin{aligned}
& -2b_2 F_1 (u_2)^3 F_1 (u_4) F_1 (u_1)\\
& = - 2b_2 F_1 (u_1) F_1 (u_3)^3 F_1 (u_4)
 + \left( B(u_4)-g(u_4) B' (u_4)  \right) F_1 (u_1) F_1 (u_3)^2. \\
\end{aligned} \]
Since the LHS of this equation depends on $u_2$ and the RHS does not, it follows that $ b_2 =0 $ and $ B(u) = g(u) B' (u). $ Hence, by \eqref{e10.03}, $ B(u) = cF_1 (u) $ for some $ c \in \cC. $ Thus, formulas \eqref{e10.44}, \eqref{e10.45}, and \eqref{e10.46} become respectively
\begin{equation}\label{e10.50}
h_3 = \Phi (u, u_1, u_2) + \Phi (u_1, u_2, u_3), 
\end{equation}
\begin{equation}\label{e10.51}
h_2 = \Phi (u, u_1, u_2) + c F_1 (u_1),
\end{equation}
\begin{equation}\label{e10.52}
g(u) \frac{\ptl h_1}{\ptl u} = c F_1 (u).
\end{equation}
Furthermore, by \eqref{e10.52} and \eqref{e10.30} we have for some $ P(u_1) \in \V: $
\begin{equation}\label{e10.53}
h_1 = cF_1 (u) + P(u_1). 
\end{equation}

Next, equation     \eqref{e10.1/5}        gives that $ g(u_1) \frac{\ptl h_1}{\ptl u_1} = h_4 - h_3 + h_2, $ hence, by \eqref{e10.50}--\eqref{e10.53}, we obtain that $ P(u_1) = cF_1 (u_1) + \al  $ for some $ \al \in \cC. $ Hence, by \eqref{e10.53},
\begin{equation}\label{e10.54}
h_1 = c \left( F_1 (u) + F_1 (u_1) \right) + \al. 
\end{equation}

In order to evaluate $ \al, $ we use the equation coming from the coefficient of $ \la \mu^4. $ It is straightforward to see, using \eqref{e10.50}, \eqref{e10.51} and \eqref{e10.54}, that this equation gives $ \al \Phi (u_1, u_2, u_3) = 0, $ hence $ \al = 0. $ Thus, by \eqref{e10.54}, \eqref{e10.51}, \eqref{e10.50}, and \eqref{e10.04}, the Case Ib produces type (iv) from the introduction. 

Now we turn to Case II: $ a_1 = 0. $ It follows from \eqref{e10.03} that $ F'_1 (u) = 0. $ Hence without loss of generality we may assume that $ F_1 = 1,  $ so that
\begin{equation}\label{e10.55}
h_4 = F_2 (u_2) F_3 (u_3).
\end{equation}
Then $ \frac{\ptl h_4}{\ptl u_1} = 0 $ and it follows from equation      \eqref{e10.14}       that 
\begin{equation}\label{e10.56}
\frac{\ptl h_3}{\ptl u_3} = 0.
\end{equation}

Recall that, by \eqref{e10.02}--\eqref{e10.04}, we have
\begin{equation}\label{e10.57}
g(u_j) \frac{\ptl h_4}{\ptl u_j} = a_j h_4, \ j = 1,2,3.
\end{equation}
Hence equation \eqref{e10.4/5} can be written, after applying $ S^{-2}, $ as
\begin{equation}\label{e10.58}
a_3 S(h_2) + a_2 h_3 = S^2 \left( g(u) \frac{\ptl h_1}{\ptl u}  \right)
\end{equation}
Since in the equation only the second term in the RHS may depend on $ u, $ we have
\begin{equation}\label{e10.59}
a_2 \frac{\ptl h_3}{\ptl u} = 0,
\end{equation}
\begin{equation}\label{e10.60}
a_2 \neq 0 \text{ implies } \frac{\ptl h_3}{\ptl u} = 0.
\end{equation}

Next, consider again equation     \eqref{e10.1/5}      .   Using \eqref{e10.57} it gives 
\begin{equation}\label{e10.61}
g(u_1) \frac{\ptl h_1}{\ptl u_1} = a_3 h_4 + a_2 h_3.
\end{equation}
Since $ h_j = h_j (u, u_1, \ldots, u_j) $, by \eqref{e10.55} and \eqref{e10.56} we conclude from this equation that $ a_3 h_4 = a_3 F_2 (u_2) F_3 (u_3) $ is independent on $ u_3. $ It follows from \eqref{e10.03} for $ i = 3  $ that 
\begin{equation}\label{e10.62}
a_3 = 0,
\end{equation}
and we may assume that $ F_3 = 1. $
Hence, by \eqref{e10.55},
\begin{equation}\label{e10.63}
h_4 = F_2 (u_2).
\end{equation}
By \eqref{e10.61} and \eqref{e10.62}, we have
\[ g(u_1) \frac{\ptl h_1}{\ptl u_1} = a_2 h_3. \]
Since the LHS of this equation depends only on $ u $ and $ u_1, $ we conclude that $ a_2 \frac{\ptl h_3}{\ptl u_2} =0. $ Hence
\begin{equation}\label{e10.64}
a_2 \neq 0 \text{ implies that } \frac{\ptl h_3}{\ptl u_2} = 0.
\end{equation}
By \eqref{e10.58} and \eqref{e10.62}, we obtain
\begin{equation}\label{e10.67}
  a_2 h_3 = S^2 \left( g(u) \frac{\ptl h_1}{\ptl u}\right).
  \end{equation}
Since the RHS is independent on $ u_1, $ we get 
\begin{equation}\label{e10.65}
a_2 \neq 0 \text{ implies that } \frac{\ptl h_3}{\ptl u_1} = 0.
\end{equation}

Next, consider equation   \eqref{e10.2/6}         . Using \eqref{e10.57} and \eqref{e10.63}, it can be written as 
 \begin{equation}\label{e10.66}
 g(u_2) \frac{\ptl h_2}{\ptl u_2} = a_2 F_2 (u_2).
 \end{equation}
 By equations \eqref{e10.4/6},  \eqref{e10.57} and \eqref{e10.63}, we have, after applying
 $ S^{-4}: $
 \begin{equation}\label{e10.67a}
 g(u) \frac{\ptl h_2}{\ptl u} = a_2 F_2 (u).
 \end{equation}

 We proceed to consider the following two remaining cases: Case IIa:
 $ a_1 = a_3 =0, a_2 \neq 0; $ Case IIb: $ a_1 = a_2 = a_3 = 0. $ In Case IIa we may assume that $ a_2 = 1.  $ By \eqref{e10.56}, \eqref{e10.59}, \eqref{e10.64}, and \eqref{e10.65}, in Case IIa we have
 \begin{equation}\label{e10.69}
 h_3 \in \cC.
 \end{equation}
 Then \eqref{e10.67} gives
 \begin{equation}\label{e10.70}
 g(u) \frac{\ptl h_1}{\ptl u} = h_3.
 \end{equation}

 Next, we consider the equation, coming from the coefficient of $ \la \mu^4. $ By \eqref{e10.69}, \eqref{e10.61}, \eqref{e10.63}, and \eqref{e10.70}, it gives, after applying $ S^{-1}: $
 \begin{equation}\label{e10.71}
 h_1 F_2 (u_1) + h^2_3 + h_3 F_2 (u_1) = 0. 
 \end{equation}
 Due to \eqref{e10.69}, applying to this equation $ \frac{\ptl }{\ptl u}, $ we obtain $ \frac{\ptl h_1}{\ptl u} F_2 (u_1) = 0. $ Multiplying this by $ g(u)  $ and using \eqref{e10.70}, we obtain
 \begin{equation}\label{e10.72}
 h_3 = 0.
 \end{equation}
 Hence, by \eqref{e10.71},
 \begin{equation}\label{e10.73}
 h_1 = 0.
 \end{equation}
 Considering again equation     \eqref{e10.29}           , we have by \eqref{e10.72}: $ h_4 S^3 \left( g(u_1) \frac{\ptl h_2}{\ptl u_1}  \right) =0, $ hence $ \frac{\ptl h_2}{\ptl u_1} = 0. $ Therefore, using \eqref{e10.66} and \eqref{e10.67a}, we obtain for some $ \al \in \cC, $
 \begin{equation}\label{e10.74}
 h_2 = F_2 (u) + F_2 (u_2) + \al. 
 \end{equation}
 Equations \eqref{e10.63}, \eqref{e10.72}, \eqref{e10.74}, and \eqref{e10.73} show that Case IIa produces Example (v) from the introduction.
 
 Finally, consider Case IIb. Due to \eqref{e10.03}, we may assume in this case that $ F_2 (u) = 1,$ hence, by \eqref{e10.63},
 \begin{equation}\label{e10.75}
 h_4 = 1.
 \end{equation}
 Then, by equation \eqref{e10.4/7}, we get $ \frac{\ptl h_3}{\ptl u} = 0. $ Recall that we also have \eqref{e10.56}; \eqref{e10.66} and \eqref{e10.67a}; \eqref{e10.61} and \eqref{e10.67}.
 These together give
 \begin{equation}\label{e10.76}
 \frac{\ptl h_i}{\ptl u} = \frac{\ptl h_i}{\ptl u_i}=0 \text{ for } i = 1,2,3.
 \end{equation}
 Next, equation \eqref{e10.15} gives, using \eqref{e10.75}:
 \begin{equation}\label{e10.77}
 g(u_2) \frac{\ptl h_3}{\ptl u_2} = S^2 \left( g(u_1) \frac{\ptl h_3}{\ptl u_1} \right).
 \end{equation}
 Due to \eqref{e10.76} for $ i = 3, $ we see that the LHS of this equation depends only on $ u_1 $ and $ u_2, $ while the RHS depends only on $ u_4 $ and $ u_5. $ Hence both sides are equal to a constant $ b: $
 \begin{equation}\label{e10.78}
 g(u_2) \frac{\ptl h_3}{\ptl u_2} = g(u_1) \frac{\ptl h_3}{\ptl u_1} =b.
 \end{equation}
 Next, equation   \eqref{e10.29}         , after using \eqref{e10.75} and \eqref{e10.78}, and applying $ S^{-2}, $ becomes
 \begin{equation}\label{e10.79}
 b(h_3 +1) = S \left( g(u_1) \frac{\ptl h_2}{\ptl u_1}  \right).
 \end{equation}
 By \eqref{e10.76} for $ i= 2 $, the RHS of \eqref{e10.79} depends only on $ u_2. $ Therefore, applying $ g(u_1) \frac{\ptl}{\ptl u_1} $ to both sides of \eqref{e10.79}, we obtain that $ b g(u_1) \frac{\ptl h_3}{\ptl u_1} = 0, $ hence, by \eqref{e10.78}, we have $b=0.$ Thus, by \eqref{e10.76}, \eqref{e10.78}, \eqref{e10.79}, we obtain that $ \frac{\ptl h_j}{\ptl u_i} = 0 $ for $ j = 1,2,3 $ and all $ i. $ Since, by \eqref{e10.75}, $ h_4 =1 $, we see that Case IIb produces the multiplicative $ \la $-bracket of general type. This completes the proof of Theorem \ref{th10.1}.
\end{proof}

\end{document}